\pgfplotsset{compat=newest}
\newtheorem{theorem}{Theorem}[section]
\newtheorem{lemma}[theorem]{Lemma}
\newtheorem{prop}[theorem]{Proposition}
\theoremstyle{definition}
\theoremstyle{remark}
\newtheorem{remark}[theorem]{Remark}
\theoremstyle{remark}
\theoremstyle{remark}
\newcommand{\addresseshere}{%
  \enddoc@text\let\enddoc@text\relax
}
\newcommand{\RP}{\mathbb{RP}}
\newcommand{\ZZ}{\mathbb{Z}}
\newcommand{\K}{K_{3,3}}
\DeclareMathOperator{\fw}{fw}
\DeclareMathOperator{\girth}{g}
\begin{document}

\title{Cubic torus obstructions of small Betti number}
\author{Marie Kramer}
\address{Department of Mathematics, Syracuse University, Syracuse, NY USA}
\email{mkrame04@syr.edu}
\begin{abstract}
    The embeddability of graphs into surfaces has been studied for nearly a century. While the complete set of topological obstructions is known for the sphere and the real projective plane, there are only partial results for the torus. Here we present a theoretical classification of cubic torus obstructions with Betti number at most eight.
\end{abstract}

\maketitle

\section*{Introduction}
\label{sec:intro}

Kuratowski's Theorem states that a graph is planar if and only if it does not contain $K_5$ or $K_{3,3}$ as a topological minor, that is, a subgraph isomorphic to a subdivision $K_5$ or $\K$ \cite{Kuratowski30}. We say $K_5$ and $\K$ are the topological obstructions for the plane. 
Via stereographic projection, embeddability into the plane is equivalent to embeddability into the two-dimensional sphere. This raises the question of how to find lists of obstructions for other compact surfaces. Robertson and Seymour proved that these lists are finite for every surface \cite{RobertsonSeymour90}.
The only other surface besides the sphere for which the complete set of topological obstructions is known is the real projective plane. Glover, Huneke, and Wang constructed 103 topological obstructions \cite{GloverHunekeWang79}, six of which are cubic \cite{GloverHuneke75}. Archdeacon then showed that this list is indeed complete \cite{Archdeacon81}.

The natural next surface to consider is the torus. There are over 250,000 known topological obstructions but this list is believed to be incomplete \cite{MyrvoldWoodcock18}. 
While there are some theoretical results regarding disconnected obstructions and obstructions of vertex-connectivity one, most of the work has been done using computers. Chambers found 206 cubic topological torus obstructions with the aid of computers, and it is believed that this list is complete \cite{Chambers02}. The largest of these are connected with 24 vertices, 36 edges, and Betti number 13. We refer to \cite{MyrvoldWoodcock18} for a more comprehensive overview of the progress towards a complete list of topological torus obstructions.

Our main theorem provides a theoretical proof that the list in \cite{Chambers02} of cubic torus obstructions is complete up to Betti number eight. It moreover provides insight into the structure of these graphs.

\begin{restatable}{thmx}{TorusVersion}
    \label{thm:CubicTorusObstructions}
    Let $G$ be a cubic graph with Betti number at most eight.
    If $G$ does not embed into the torus, then $G$ is one of eleven pairwise non-isomorphic graphs $H_0, H_1, \ldots, H_9, H_0 \cup e$. Moreover, the graphs $H_0, H_1, \ldots, H_9$ shown below are cubic obstructions for the torus.
\end{restatable}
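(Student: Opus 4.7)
My plan splits the theorem into two directions. For the \emph{easy direction} I would verify that each of the ten graphs $H_0,\ldots,H_9$ (and also $H_0 \cup e$) is a cubic topological torus obstruction: this means exhibiting, for each one, (i) a certificate of non-embeddability into the torus and (ii) an embedding into the torus of every proper topological minor. For (i) one can use the standard combinatorial tests—enumerating rotation systems and checking Euler's formula, or invoking known lower bounds on the non-orientable genus combined with a parity/Euler-characteristic argument. For (ii) it suffices to exhibit torus embeddings of the graphs obtained by suppressing a single edge (since the graphs are cubic, deleting one edge and smoothing produces a graph of Betti number one smaller, and these can be handled by direct construction). Since there are only finitely many graphs to check and they are small, this step is essentially computational/illustrative.

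For the \emph{hard direction}—the real content—I would argue contrapositively: let $G$ be a cubic graph with $\beta(G) \le 8$ that is a torus obstruction, and prove $G \cong H_i$ for some $i$ or $G \cong H_0 \cup e$. I would first reduce to the connected case by handling the single disconnected possibility $H_0 \cup e$ via the classification of disconnected obstructions (a cubic disconnected obstruction must be a cubic planar obstruction union an obstruction of smaller Betti number, or similar, and the only room at $\beta \le 8$ is exactly $H_0 \cup e$). For the connected case I would stratify by vertex/edge connectivity: (a) use the existing theory for obstructions of connectivity $\le 2$ to eliminate or classify these; (b) when $G$ is $3$-edge-connected, since $G$ is cubic we have $\beta(G) = |E(G)|/3 + 1$, so $|V(G)| \le 14$ and $|E(G)| \le 21$, and I would perform a structural case analysis on $\girth(G)$ and on the distribution of short cycles.

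The key engine for the structural analysis will be the topological invariants signaled by the paper's operators $\fw$, $\girth$, $\sys$, and $\Mob$. My plan is to combine a systolic/face-width bound (if $G$ embeds in the torus, the face-width is bounded in terms of $\girth$ and $\beta$) with a minor-minimality argument: for any embedded short cycle $C$ of $G$, one can cut along $C$ (or a Möbius band around it if non-orientable pieces arise in auxiliary arguments) to produce a planar-with-boundary graph whose boundary pattern is forced. Using that $G \setminus e$ embeds in the torus for every edge $e$, one then extracts, for each edge, a torus embedding; comparing pairs of such embeddings via their rotation systems yields strong local constraints on $G$. At each girth value $g \in \{3,4,5,\ldots\}$ the number of cubic graphs of $\beta(G)\le 8$ satisfying the resulting combinatorial constraints is finite and small, and I would check that the survivors are exactly the $H_i$'s.

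The main obstacle will be step (c): pinning down the structure of $G$ when $G$ is $3$-connected cubic with small girth ($g=3,4$). Such graphs admit many short cycles and thus many potentially non-contractible curves in a putative embedding, so the face-width/systole argument gives weaker control and the casework can proliferate. I would attack this by first proving a reduction lemma that in a $3$-edge-connected cubic torus obstruction every triangle (respectively every $4$-cycle) must either be facial in every near-embedding or participate in a fixed local configuration that appears in one of the $H_i$'s, thereby bounding the number of triangles and $4$-cycles and collapsing the case analysis down to a finite check. Once the short-girth cases are controlled, the cases $g \ge 5$ are much tighter because the counts $|V|,|E|$ combined with the girth condition leave only a handful of candidate cubic graphs, each easily matched to the list.
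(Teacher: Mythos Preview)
Your proposal misses the organizing idea that makes the classification tractable. The paper does \emph{not} stratify by girth and connectivity and then grind through rotation systems or systolic/face-width cutting arguments. Instead it first proves (Proposition~\ref{prop:ProjectivePlanarToroidal}) that every cubic projective planar graph with $b_1\le 8$ is toroidal, using the facewidth result of Fiedler--Huneke--Richter--Robertson together with the $K_6$-minor theorem of Krakovski--Mohar. Consequently any cubic non-toroidal $G$ with $b_1\le 8$ is non-projective-planar and therefore contains a subdivision of one of the six cubic $\mathbb{RP}^2$-obstructions $E_{42},F_{11},F_{12},F_{13},F_{14},G_1$. Since these have Betti numbers $8,7,7,7,7,6$, the entire problem reduces to enumerating, up to isomorphism, the graphs $F_i\cup e$ and $G_1\cup e_1\cup e_2$ and deciding which are toroidal; the paper does this by exploiting the explicit list of inequivalent toroidal embeddings of $F_{11},\ldots,F_{14},G_1$ from \cite{Kramer24}. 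Your plan never invokes the projective plane, so you are left proposing a direct structural attack whose ``key engine'' (cutting along short cycles, comparing rotation systems of $G\setminus e$, a hoped-for reduction lemma on triangles and $4$-cycles) is speculative and nowhere near a proof.

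There are also several local errors. You describe $H_0\cup e$ as the ``single disconnected possibility'' and propose classifying it via disconnected-obstruction theory; in fact $H_0\cup e$ is \emph{connected} (it is the $1$-connected graph obtained by bridging two copies of $K_{3,3}$), it is \emph{not} an obstruction (it properly contains $H_0$), and the disconnected case is $H_0=E_{42}$ itself. Your girth case split starting at $g=3$ is wasted effort: cubic obstructions have girth at least four (Proposition~\ref{prop:ObstructionGirth}). Finally, your ``easy direction'' treats non-embeddability as a rote computation, but the paper's argument is structural: it pins down all labelled toroidal embeddings of $F_{11}$ and $G_1$ (Lemmas~\ref{lem:Fixing2inF11}, \ref{lem:LabelledStackedG1}, \ref{lem:LabelledShiftedG1}) and shows none extends to $H_i$. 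Without the projective-plane reduction you have no finite, structured family to enumerate, and the proposal as written does not constitute a proof.
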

\begin{figure}[H]
    \centering
    \includegraphics[height=1.9cm]{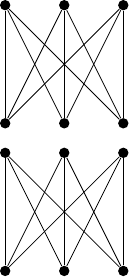}
    \includegraphics[height=1.9cm]{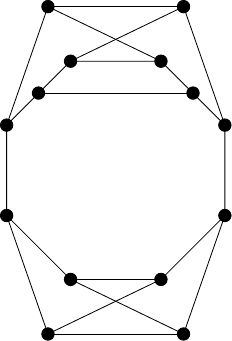}
    \includegraphics[height=1.9cm]{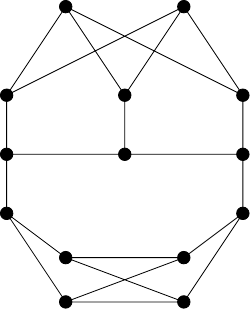}
    \includegraphics[height=1.9cm]{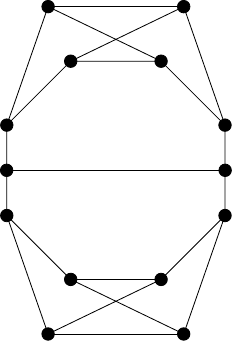}
    \includegraphics[height=1.9cm]{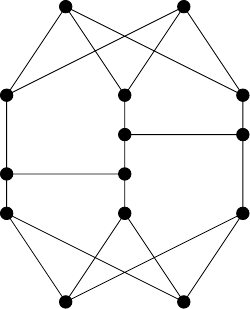}
    \includegraphics[height=1.9cm]{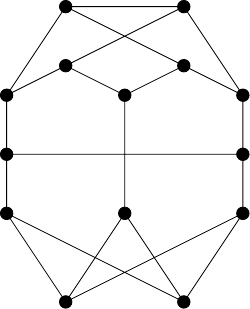}
    \includegraphics[height=1.9cm]{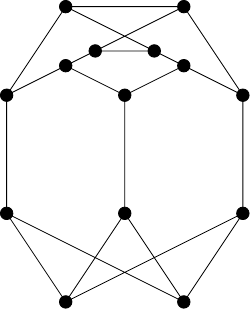}
    \includegraphics[height=1.9cm]{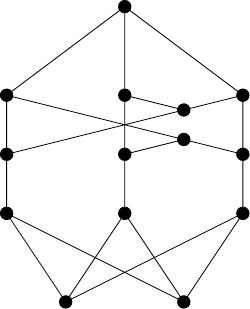}
    \includegraphics[height=1.9cm]{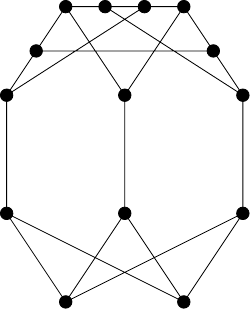}
    \includegraphics[height=1.9cm]{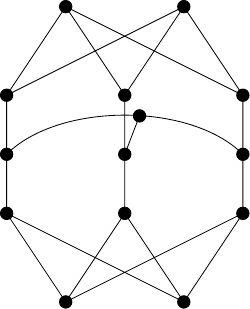}
\end{figure}
Previous work \cite{KrakovskiMohar14} implies that projective planar cubic graphs with sufficiently small Betti number have small facewidth, also called representativity. In particular, this allows for a projective planar embedding to be transformed into a toroidal one \cite{FiedlerHunekeRichterRobertson95}.
Therefore, to prove Theorem \ref{thm:CubicTorusObstructions}, we use the six cubic projective plane obstructions to build candidates for cubic torus obstructions. A key tool in determining which of the candidates are obstructions is the classification of toroidal embeddings of cubic projective plane obstructions from \cite{Kramer24}.

We restrict our attention to \textit{cubic graphs} for two reasons. First, during the classification of projective plane obstructions the cubic case was considered \cite{GloverHuneke75} before moving on to the case of bounded degree \cite{GloverHuneke77} and finally the general case \cite{GloverHunekeWang79}, \cite{Archdeacon81}.
Second, the restriction to cubic graphs is motivated by a geometric application of our main theorem discovered by Kennard, Wiemeler, and Wilking \cite{KennardWiemelerWilking22}. They showed that if a graph embeds into a surface, one obtains strong upper bounds on its systole. It turns out that given any graph $G$, there exists a cubic graph $G'$ with the same Betti number such that the systole of $G$ is bounded above by that of $G'$. Thus, for finding these systole bounds, cubic graphs represent extreme cases, and the proof may be reduced to the cubic case.

The second restriction is to only consider graphs of \textit{small Betti number}. This is also motivated by results from \cite{KennardWiemelerWilking22}. Systole bounds of graphs give bounds on matroid theoretic invariants, which in turn give geometric bounds for special tours representations of small rank. The rank of the torus representation equals the Betti number of the graph in the important special case of cographic representations, and therefore Betti numbers in the range of Theorem \ref{thm:CubicTorusObstructions} are of interest. These geometric bounds have seen applications in Riemannian geometry in \cite{KennardWiemelerWilking22} (see also \cite{KennardWiemelerWilking21}).

\textbf{Acknowledgements.} This work is part of the author's Ph.D. thesis, and she would like to thank her advisor Lee Kennard. The author is grateful for support from NSF Research Grants DMS-2005280 and DMS-2402129, as well as from the Syracuse University Graduate School through Pre-Dissertation and Dissertation Fellowships in Summer 2023 and Summer 2024, respectively.

\section{Preliminaries}\label{sec:prelim}
Unless otherwise specified, a graph $G=(V,E)$ is assumed to be undirected and simple.
Most of the graphs we work with are \textbf{\textit{cubic}}, that is, they only have vertices of degree three. The \textbf{\textit{girth}} of $G$, denoted by $\girth(G)$, is the length of its shortest cycle.

We say a graph $G$ is \textbf{\textit{$\boldsymbol{k}$-connected}} if it is $k$-edge-connected in the sense that $|V(G)| \geq 2$ and any cut set of edges has at least $k$ elements. A graph has \textbf{\textit{connectivity $\boldsymbol{k}$}} if it is $k$-connected and there exists a cut set with $k$ elements. As we are working with cubic graphs, the notion of cyclically $4$-connectedness is useful. A cubic graph is \textbf{\textit{cyclically $\boldsymbol{4}$-connected}} if it is $3$-connected and the removal of any cut set of size three leaves at most one component containing a cycle. The figure below shows two $3$-connected cubic graphs, where only the second one is cyclically $4$-connected. We refer to \cite{KinganKingan24} for more information on cyclically $4$-connected cubic graphs.
\begin{figure}[H]
    \centering
    \includegraphics[height=2.5cm]{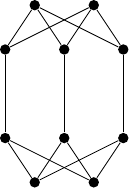} \hspace{1cm}
    \includegraphics[height=2.5cm]{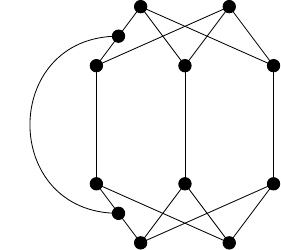}
\end{figure}

The \textbf{\textit{Betti number}} $b_1$ of a graph is defined in the topological sense, that is, as the rank of the first homology group of the graph when viewed as a CW-complex. This invariant equals the cyclomatic number and the dimension of the cycle basis. A short calculation yields \[b_1 = |E|-|V|+b_0,\] where $b_0$ is the rank of the 0$^{\text{th}}$ homology group of the graph or equivalently the number of components of the graph. 
For cubic graphs, we have $2|E|=3|V|$ by the Handshaking Lemma, and thus \[b_1=\dfrac{1}{2}|V|+b_0.\]

Two graphs $G$ and $G'$ are \textbf{\textit{isomorphic}} if there exists a one-to-one correspondence between their vertices such that a pair of vertices is adjacent in $G$ if and only if the corresponding pair of vertices is adjacent in $G'$. If $G$ and $G'$ are isomorphic, we denote this by $G \cong G'$.
Two graphs $G$ and $G'$ are \textbf{\textit{homeomorphic}} if some subdivision of $G$ is isomorphic to some subdivision of $G'$. If $G$ and $G'$ are homeomorphic, we denote this by $G \simeq G'$.

An \textbf{\textit{embedding}} of a graph $G$ into a surface $\Sigma$ is a function $\varphi$ mapping the vertices of $G$ to points in $\Sigma$ and the edges of $G$ to continuous curves in $\Sigma$ such that curves representing distinct edges intersect only at images of vertices (see \textsection 15.1 in \cite{GraphsAlgosAndOpti}). We denote an embedding by $\varphi: G \hookrightarrow \Sigma$.
A \textbf{face} of an embedding is a connected component of $\Sigma \setminus \varphi(G)$.
A \textbf{\textit{facial cycle}} or \textbf{\textit{facial walk}} is an oriented cycle or walk in $G$ along the boundary of a face of $\varphi(G)$.
An embedding of $G$ into $\Sigma$ is \textbf{\textit{cellular}} if each face is homeomorphic to an open disk. Note that all embeddings into the sphere are cellular.

Two embeddings $\varphi_1$ and $\varphi_2$ of a graph $G$ into a surface $\Sigma$ are \textbf{\textit{equivalent}} if there exists a homeomorphism $h: \Sigma \to \Sigma$ such that the images of $G$ under $\varphi_1$ and $h \circ \varphi_2$ agree (see \textsection 15.2 in \cite{GraphsAlgosAndOpti}). Since any homeomorphism of a surface maps facial walks to facial walks, two embeddings with facial walks of different lengths cannot be equivalent.

A graph is called \textbf{\textit{planar}} if it embeds into the sphere. Similarly, a graph is called \textbf{\textit{projective planar}} if it embeds into the real projective plane, and it is called \textbf{\textit{toroidal}} if it embeds into the torus. Via stereographic projection it can be show that a graph embeds into the sphere if and only if it embeds into the plane (see \textsection 1.3 in \cite{PlanarGraphs}). 

A graph $G$ is a \textbf{\textit{topological obstruction}} for a surface $\Sigma$ if $G$ does not embed into $\Sigma$ but every topological minor does. Here, a \textbf{\textit{topological minor}} is a graph obtained from $G$ by deleting edges and/or vertices and contracting edges with an endpoint of degree two.
A graph is a \textbf{\textit{cubic obstruction}} for a surface $\Sigma$ if it is cubic and a topological obstruction for $\Sigma$.

Let $G$ be a projective planar graph, and $\varphi: G \hookrightarrow \RP^2$ one such embedding.
The \textbf{\textit{facewidth of the embedding $\boldsymbol{\varphi}$}}, denoted $\fw(\varphi)$, is the minimum number of intersection points of $\varphi(G)$ with any essential cycle in $\RP^2$.
The \textbf{\textit{facewidth of the graph $\boldsymbol{G}$}}, denoted $\fw(G)$, is the minimum facewidth over all embeddings of $G$ into $\RP^2$.

We record here an elementary observation about cubic obstructions. In particular, it shows that cubic obstructions are simple even if it is not explicitly assumed.

\begin{prop}
    \label{prop:ObstructionGirth}
    If $G$ is a cubic obstruction for a surface $\Sigma$, then $G$ has girth at least four.
\end{prop}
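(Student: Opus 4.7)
The plan is to argue by contradiction. Suppose $G$ is a cubic obstruction for $\Sigma$ and contains a cycle $C$ of length at most three; I will produce a proper topological minor $G'$ of $G$ that also fails to embed in $\Sigma$, contradicting the minor-minimality of $G$. The guiding principle is that a short cycle can always be reconstructed inside a small disk, so any embedding of $G'$ in $\Sigma$ extends to an embedding of $G$; since $G$ does not embed, neither does $G'$.

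I would split into three cases according to the length of $C$. In the loop case, with $C$ a loop at a vertex $p$, the graph $G' := G - C$ is a proper topological minor, and any embedding of $G'$ extends by drawing a small loop at $p$ inside an incident face. In the digon case, let $e_1$ and $e_2$ be parallel edges between $u$ and $v$, and set $G' := G - e_2$; any embedding of $G'$ extends by drawing $e_2$ as a curve close to $e_1$. In the triangle case, let $u,v,w$ be the vertices of $C$; since $G$ is cubic, each has exactly one further neighbor, giving edges $uu', vv', ww'$. I would delete $vw$, then contract $uv$ (now valid since $v$ has degree two), then contract the edge joining the merged vertex to $w$ (valid since $w$ then has degree two). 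This yields a $G'$ in which the triangle has been collapsed to a single vertex $y$ of degree three joined to $u', v', w'$, and all three moves are legitimate topological-minor operations.

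The main step to verify is that an embedding of $G'$ in the triangle case extends to one of $G$. Given an embedding of $G'$, the three edges out of $y$ appear in some cyclic order around $y$; inside a small disk neighborhood of $y$ I would replace $y$ by a small triangle whose vertices are labeled $u,v,w$ so as to match that cyclic order, then reattach the three external edges at the corresponding corners. This local surgery recovers the triangle $uvw$ with the correct external connections, producing an embedding of $G$. I expect this cyclic-order blow-up to be the main subtlety, resolved by the observation that the symmetry of a triangle accommodates either cyclic orientation of the three external edges at $y$. A minor technical caveat is that $u', v', w'$ need not be distinct, in which case $G'$ may contain loops or digons; since we only need $G'$ to be a proper topological minor failing to embed (not a simple or cubic one), this causes no difficulty.
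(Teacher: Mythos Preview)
Your proof is correct, and for loops and digons it matches the paper exactly. In the triangle case, however, the paper takes a shorter route: instead of collapsing the triangle to a point via a $Y$--$\Delta$ move, it simply deletes one edge $e_3$ of the triangle and observes that, because $G$ is cubic, the remaining two triangle edges $e_1,e_2$ meet at a degree-three vertex and hence lie on the boundary of a common face in any embedding of $G\setminus e_3$; the edge $e_3$ can then be drawn across that face. This avoids the contraction bookkeeping and the disk blow-up entirely. Your $Y$--$\Delta$ argument has the virtue of being a reusable general principle (and is essentially the cubic $Y$--$\Delta$ transformation the paper invokes later in Remark~\ref{rmk:CubicYDelta}), but for this particular statement the single-edge deletion is cleaner and sidesteps the caveat about $u',v',w'$ possibly coinciding.
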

\begin{proof}
    Let $G$ be a cubic obstruction for $\Sigma$.
    If $e$ is an edge that is either a loop or parallel to another edge, then $G \setminus e$ embeds into $\Sigma$ by assumption. It is easy to see that any such embedding extends to an embedding of $G$. Hence, $G$ does not have girth one or two.
    Lastly, suppose $\girth(G)=3$, that is, $G$ has girth three. Let $e_1, e_2, e_3 \in E(G)$ be a 3-cycle in $G$. By assumption, $G \setminus e_3$ embeds into $\Sigma$. Note that as $G$ is cubic, any embedding of $G \setminus e_3$ has a face $f$ with both $e_1$ and $e_2$ on its boundary. Thus, we can extend any embedding of $G \setminus e_3$ to an embedding of $G$ by embedding $e_3$ into the face $f$, contradicting $G$ being an obstruction.
\end{proof}

\section{Constructing cubic torus obstruction candidates}\label{sec:t2obstruction_candidates}
In this section, we outline how to construct graphs that are candidates for small cubic torus obstructions. In particular, we will see that any projective planar candidates are too large for our purposes.

\begin{prop}\label{prop:ProjectivePlanarToroidal}
    If $G$ is a cubic projective planar graph with Betti number at most eight, then $G$ is toroidal.
\end{prop}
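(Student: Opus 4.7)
The plan is to combine the two ingredients explicitly flagged in the introduction, which together form a short pipeline from projective planarity to toroidality once the Betti number is small enough.

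First, I would invoke the main result of Krakovski--Mohar \cite{KrakovskiMohar14}. Their theorem gives an upper bound on the facewidth $\fw(G)$ of a cubic projective planar graph $G$ in terms of a parameter that is controlled by $|V(G)|$, and hence by $b_1(G)$ via the cubic identity $b_1 = \tfrac{1}{2}|V| + b_0$ from the preliminaries. The key numerical step is to show that the hypothesis $b_1(G) \leq 8$ forces the Krakovski--Mohar bound to collapse to $\fw(G) \leq 2$ (or whatever small constant their bound yields in this regime). This is a routine but essential check: one substitutes the vertex bound coming from $b_1(G) \leq 8$ into their inequality and verifies the resulting facewidth is within the range handled by the next step.

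Second, I would apply the transformation theorem of Fiedler--Huneke--Richter--Robertson \cite{FiedlerHunekeRichterRobertson95}. Their result says that a projective planar embedding whose facewidth is sufficiently small can be converted into an embedding into the torus, because one can cut along a short essential curve in $\RP^2$ meeting $\varphi(G)$ in the requisite number of points, unfold into a disk, and reglue to realize $G$ on the torus. Feeding in $\fw(G) \leq 2$ from the previous step, this directly produces an embedding $G \hookrightarrow T^2$, which is the desired conclusion.

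The first step is the main obstacle: one must verify that the range $b_1(G) \leq 8$ really lies inside the regime where the Krakovski--Mohar facewidth bound is small enough to feed into the Fiedler--Huneke--Richter--Robertson conversion. If the bound just barely works at $b_1 = 8$, then care is needed at the boundary; if it works with room to spare, the proof is essentially a one-line citation chain. Everything else is bookkeeping: the cubic assumption makes the translation between $|V|$ and $b_1$ transparent, and no additional structural analysis of $G$ is required beyond what is packaged in the two cited theorems.
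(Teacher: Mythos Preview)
Your two-step pipeline is exactly the paper's argument, but your description of the Krakovski--Mohar input mischaracterizes what that paper supplies and thereby hides the one substantive step. Their result is not a direct inequality of the form $\fw(G) \leq f(|V(G)|)$ into which you can plug a vertex bound; rather, it asserts that a projective planar graph with $\fw(G) \geq 4$ contains $K_6$ as a minor. The paper then observes that since $K_6$ is $5$-regular on six vertices, any cubic graph with a $K_6$ minor has at least $18$ vertices, forcing $b_1(G) \geq \tfrac{1}{2}\cdot 18 + 1 = 10 > 8$. This contradiction yields $\fw(G)\leq 3$ (not $\leq 2$), and Fiedler--Huneke--Richter--Robertson is applied at exactly that threshold to conclude toroidality. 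Once you replace your vague ``substitute the vertex bound into their inequality'' with this $K_6$-minor counting step, your outline becomes the paper's proof.
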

\begin{proof}
    Let $G$ be a cubic projective planar graph with Betti number at most eight. It follows from \cite{FiedlerHunekeRichterRobertson95} that $G$ is toroidal whenever $\fw(G) \leq 3$. If $\fw(G) \geq 4$, then $G$ contains $K_6$ as a minor by \cite{KrakovskiMohar14}. Note that as $K_6$ is $5$-regular with six vertices, any cubic graph containing $K_6$ as a minor has at least 18 vertices. Hence, $b_1(G)=\frac{1}{2}|V(G)| + b_0(G) \geq 9 +1 = 10$, contradicting the assumption that $b_1(G) \leq 8$.
\end{proof}

We assume from now on that $G$ is a cubic graph of Betti number at most eight not embeddable into the projective plane.
By \cite{GloverHuneke75}, $G$ contains a subgraph homeomorphic to one of the six cubic projective plane obstructions $E_{42}, F_{11}, F_{12}, F_{13}, F_{14},$ and $G_1$ pictured below.
\begin{figure}[H]
    \centering
    \includegraphics[height=1.9cm]{images/E42.pdf} \hspace{.1cm}
    \includegraphics[height=1.9cm]{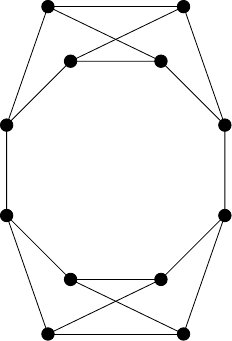} \hspace{.1cm}
    \includegraphics[height=1.9cm]{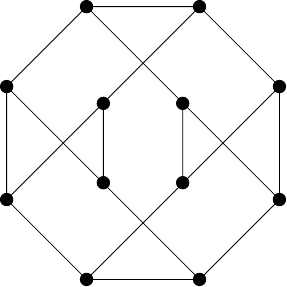} \hspace{.1cm}
    \includegraphics[height=1.9cm]{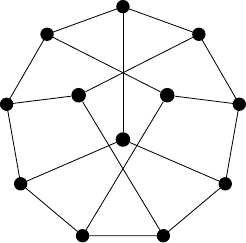} \hspace{.1cm}
    \includegraphics[height=1.9cm]{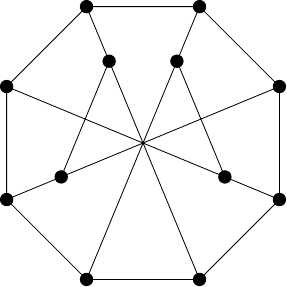} \hspace{.1cm}
    \includegraphics[height=1.9cm]{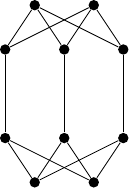} 
\end{figure}

The following operation will be used in the upcoming sections to build obstruction candidates.
Starting from a graph $G$, we obtain the graph $G \cup e$ by adding an edge in one of three ways. See Figure \ref{fig:ExcludingCases} for illustrations of these operations.
\begin{itemize}
    \item Let $e_1 \in E(G)$. Subdivide $e_1$ once, calling the new vertex $u$. Define the graph $G \cup e$ as this subdivision with additional vertex $v$ and additional edges $uv$ and $vv$.
    \item Let $e_1 \in E(G)$. Subdivide $e_1$ twice, calling the new vertices $u$ and $v$. Define the graph $G \cup e$ as this subdivision with the additional edge $uv$.
    \item Let $e_1, e_2 \in E(G)$ with $e_1 \neq e_2$. Subdivide each of $e_1, e_2$ once, calling the new vertices $u$ and $v$, respectively. Define the graph $G \cup e$ as this subdivision with the additional edge $uv$.
\end{itemize}

\begin{remark}\label{rmk:5AddingEdge} We collect some elementary observations about the effect of adding an edge to a graph as described above.
    \begin{enumerate}[(1)]
        \item If $G$ is cubic, so is $G \cup e$.
        \item By definition of $G\cup e$, $b_1(G \cup e)=b_1(G)+1$ if $G$ is connected or more generally if $b_0(G \cup e) = b_0(G)$, and $b_1(G \cup e)=b_1(G)$ otherwise.
        \item We build the candidates for cubic torus obstructions from cubic simple graphs and attach zero, one or two edges as described above. By Proposition \ref{prop:ObstructionGirth}, graphs of girth less than four cannot be cubic obstructions. Hence, if $G$ is toroidal and we only attach one edge $e$, we may assume that $\girth(G \cup e)\geq 4$. That is, we may exclude all cases pictured in Figure \ref{fig:ExcludingCases}.
        Similarly, if we attach two edges $e_1, e_2$, we may assume that $G \cup e_1$ has girth at least three. Indeed, if $\girth(G \cup e_1) \leq 2$, then $\girth(G \cup e_1 \cup e_2) \leq 3$. That is, we may exclude the first two of the cases pictured in Figure \ref{fig:ExcludingCases}.
            \begin{figure}[H]
                \centering
                \includegraphics[height=1.5cm]{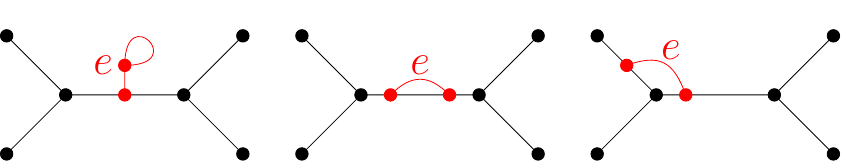}
                \caption{Cubic obstructions have girth at least four}
                \label{fig:ExcludingCases}
            \end{figure}
    \end{enumerate}
\end{remark}

Before introducing tools that will help us construct cubic torus obstructions from $F_{11}$, $F_{12}$, $F_{13}$, $F_{14}$, and $G_1$ in Sections \ref{sec:extendingF11-F14} and \ref{sec:extendingG1}, we consider the graph $E_{42}$.
By \cite{Kramer24}, $E_{42}$ does not embed into the torus. We moreover observe that $E_{42} \cong H_0$.
As $H_0$ is disconnected, Remark \ref{rmk:5AddingEdge} gives $b_1(H_0 \cup e)=b_1(H_0)=8$ if and only if $H_0 \cup e$ is connected. Moreover, edge-transitivity of $K_{3,3}$ implies that up to isomorphism, there exists exactly one connected graph $H_0 \cup e$.
As $H_0$ is not toroidal, neither is $H_0 \cup e$. Note that $H_0 \cup e$ cannot be an obstruction as its subgraph $H_0$ does not embed into the torus either.

When building candidates for torus obstructions from the remaining cubic projective plane obstructions, we use graph symmetries to avoid listing many redundancies up to graph isomorphisms. However, we do not completely classify the isomorphism classes as this would have been more time consuming than simply constructing additional embeddings as needed. Additionally, Remarks \ref{rmk:CubicYDelta} and \ref{rmk:5New4Cycles} below reduce the number of cases we need to consider in the upcoming sections.

\begin{remark}\label{rmk:CubicYDelta}
    Let $G$ be a cubic graph, and $G \cup e$ such that $e$ is part of a new $3$-cycle $C_3$ in $G \cup e$. It is easy to see that attaching $e$ in such a way is equivalent to a \textit{cubic $Y-\Delta$ transformation}, which we have illustrated below. In particular, the graph resulting from applying a cubic $Y-\Delta$ transformation to a cubic graph is cubic. Moreover, this equivalence implies that the isomorphism class of $G \cup e$ only depends on the orbit of the vertex $v \in V(G)$ with $v \in V(C_3)$ under the action of the automorphism group of $G$.
                    \begin{figure}[H]
                        \centering
                       \includegraphics[height=2cm]{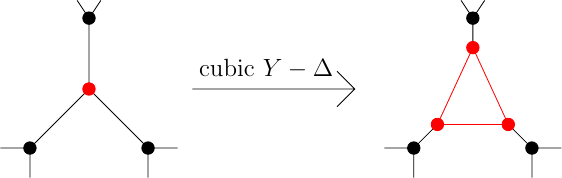}
                    \end{figure}
    While by Proposition \ref{prop:ObstructionGirth} $G \cup e$ cannot be an obstruction since $\girth(G \cup e) = 3$, we need to consider such graphs in cases where we attach a second edge. If one of the endpoints of the second edge is on an edge in $C_3$, this turns the $3$-cycle into a $4$-cycle and could therefore give a graph of girth at least four.
\end{remark}

\begin{remark}\label{rmk:5New4Cycles}
    Let $G$ be a cubic graph and $G \cup e$ such that $e$ is part of a new $4$-cycle $C_4$ in $G \cup e$. Denote the edge not incident to $e$ in $C_4$ by $e'$. By a result from \cite{KinganKingan24}, there are at most two isomorphism classes of graphs $G \cup e$ such that $C_4 = e h_1 e' h_2$, where $h_i \in E(G \cup e) \setminus (e \cup e')$ for $i \in \{1,2\}$.
    Therefore, after picking the edge $e' \in E(G)$ up to automorphism, there are at most two isomorphism classes of graphs $G \cup e$.
    \begin{figure}[H]
        \centering
        \includegraphics[height=1.5cm]{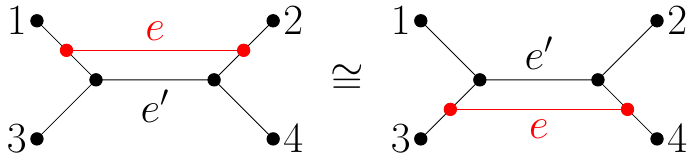} \hspace{.5cm}
        \includegraphics[height=1.5cm]{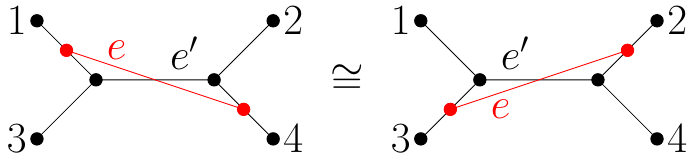}
    \end{figure}
\end{remark}

\section{Cubic torus obstruction candidates from \texorpdfstring{$F_{11}$}{F\_{11}}, \texorpdfstring{$F_{12}$}{F\_{12}}, \texorpdfstring{$F_{13}$}{F\_{13}}, and \texorpdfstring{$F_{14}$}{F\_{14}}}\label{sec:extendingF11-F14}
In this section, we construct candidates for cubic torus obstructions from the cubic projective plane obstructions $F_{11}, F_{12}, F_{13}$, and $F_{14}$. Let $i \in \{11, 12, 13, 14\}$. By \cite{Kramer24}, $F_i$ embeds into the torus. We moreover observe that  $b_1(F_i)=\frac{1}{2}|V(F_i)|+b_0(F_i)=6+1=7$ and $b_1(F_i \cup e)=8$ as $F_i$ is connected.

\begin{prop}
    \label{prop:F11Torus}
    $F_{11} \cup e$ is toroidal if $F_{11} \cup e \notin \{H_1, H_2, H_3, H_4\}$, where $H_1$ through $H_4$ are pictured below.
    \begin{figure}[H]
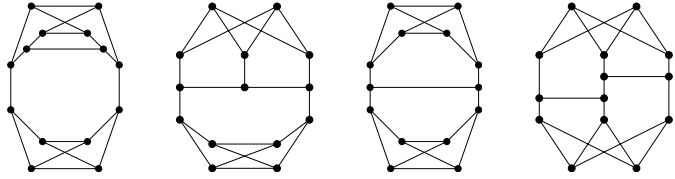

            \centering
            \includegraphics[height=2.25cm]{images/introduction/H1.pdf} \hspace{.5cm}
            \includegraphics[height=2.25cm]{images/introduction/H2.pdf} \hspace{.5cm}
            \includegraphics[height=2.25cm]{images/introduction/H3.pdf} \hspace{.5cm}
            \includegraphics[height=2.25cm]{images/introduction/H4.pdf}
            \caption{The graphs $H_1$ through $H_4$}
            \label{fig:H1ThroughH4}
        \end{figure}
\end{prop}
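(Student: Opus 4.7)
The plan is to enumerate the isomorphism classes of $F_{11}\cup e$ and, for each one that is not among $H_1, H_2, H_3, H_4$, exhibit an explicit toroidal embedding. A toroidal embedding of $F_{11}$ extends to a toroidal embedding of $F_{11}\cup e$ precisely when the edges of $F_{11}$ on which the endpoints of $e$ are inserted (one edge in the double-subdivision or loop constructions, two edges otherwise) lie on the boundary of a common face. Since \cite{Kramer24} classifies the toroidal embeddings of $F_{11}$ up to equivalence as a short explicit list described by its facial walks, the question of toroidality reduces to a finite check of face incidences.

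To enumerate efficiently, I would first compute the orbits of the automorphism group of $F_{11}$ on edges and on unordered pairs of edges; the isomorphism class of $F_{11}\cup e$ is then determined (up to the at most two classes permitted by Remark \ref{rmk:5New4Cycles} whenever $e$ lies on a new $4$-cycle) by which orbit its attachment data lies in. Remark \ref{rmk:CubicYDelta} gives a further reduction in the $3$-cycle cases: the graph $F_{11}\cup e$ then depends only on the automorphism orbit of a single vertex of $F_{11}$. Together with the trivial dismissal of loop and multi-edge configurations, where the extra edge can be drawn inside any face of an embedding of the reduced graph, these tools compress what would otherwise be a very long list of cases into a manageable one.

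For each remaining isomorphism class, I first check whether $F_{11}\cup e$ coincides with one of $H_1, \ldots, H_4$; if so, the proposition makes no claim. Otherwise, I consult the list of toroidal embeddings of $F_{11}$ from \cite{Kramer24} and exhibit a face of some such embedding whose boundary contains both attachment edges. Inserting the new degree-three vertices inside that face and routing $e$ across it then produces the desired toroidal embedding of $F_{11}\cup e$. I expect the main obstacle to be the size and bookkeeping of this case analysis: the number of orbits of edge pairs of $F_{11}$ is not small, and for each I must verify the face-sharing property against several inequivalent toroidal embeddings of $F_{11}$; the symmetry reduction together with Remarks \ref{rmk:CubicYDelta} and \ref{rmk:5New4Cycles} is what keeps the argument from exploding combinatorially.
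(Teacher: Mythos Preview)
Your proposal is correct and follows essentially the same approach as the paper: compute the edge orbits of $F_{11}$ under its automorphism group (the paper finds three), enumerate the attachments of $e$ giving girth at least four (the paper lists at most ten, reduced to nine via Remark~\ref{rmk:5New4Cycles}), and verify that a single labelled toroidal embedding of $F_{11}$ from \cite{Kramer24} extends to $F_{11}\cup e$ in all but four cases, which are then identified with $H_1,\dots,H_4$. The only organizational difference is that the paper dismisses all girth~$\leq 3$ cases at once via Proposition~\ref{prop:ObstructionGirth} rather than enumerating the $3$-cycle attachments through Remark~\ref{rmk:CubicYDelta}.
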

    
\begin{proof}
    We first observe that $F_{11}$ has three types of edges, where the equivalence classes represent the orbits under automorphisms.
        \begin{multicols}{2}
            \begin{itemize}
                \item $[AA']=\{AA', BB', A8, A'1, B5, B'4, 45, 18\}$
                \item $[A2]=\{A2, A'7, B3, B'6, 12, 34, 56, 78\}$
                \item $[23]=\{23, 67\}$
            \end{itemize}
            \newcolumn
            \begin{figure}[H]
                \centering
                \includegraphics[height=3.5cm]{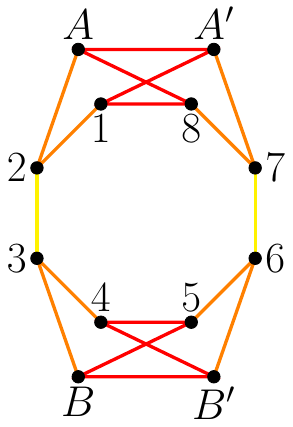}
                \label{fig:F11Orbits}
            \end{figure}
        \end{multicols}

    As $F_{11}$ is toroidal by \cite{Kramer24}, Proposition \ref{prop:ObstructionGirth} implies that $F_{11} \cup e$ is toroidal whenever \linebreak $\girth(F_{11} \cup e) \leq 3$. Up to isomorphism, there are at most ten ways to add one edge $e$ to $F_{11}$ such that $\girth(F_{11} \cup e) \geq 4$. We list these below. 
        \begin{itemize} 
            \item If one endpoint of $e$ attaches to an edge in $[AA']$, without loss of generality assume it attaches to $AA'$, we get the following five classes of edges in $E(F_{11})$ for the second endpoint of $e$ to attach to. Note that these equivalence classes are the orbits under the subgroup of automorphisms preserving the edge $AA'$. We always assume without loss of generality that we attach to the representative given in parenthesis.
                \begin{multicols}{2}
                    \begin{enumerate}[(1)]
                        \item $(BB')=\{BB', B5, B'4, 45\}$
                        \item $(B3)=\{B3, B'6, 34, 56\}$
                        \item $(12)=\{12, 78\}$
                        \newcolumn
                        \item $(23)=\{23, 67\}$
                        \item $(18)=\{18\}$
                    \end{enumerate}
                \end{multicols}
                
            \item If one endpoint of $e$ attaches to an edge in $[A2]$, without loss of generality assume it attaches to $A2$, and assuming the second endpoint does not attach to an edge in $[AA']$, we get the following four classes of edges in $E(F_{11})$ for the second endpoint of $e$ to attach to.
                \begin{multicols}{2}
                    \begin{enumerate}[(1)]
                        \setcounter{enumi}{5}
                        \item $(A'7)=\{A'7, 78\}$ 
                        \item $(B3)=\{B3, 34\}$
                        \item $(B'6)=\{B'6, 56\}$
                        \item $(67)=\{67\}$
                    \end{enumerate}
                \end{multicols}
                
            \item If one endpoint of $e$ attaches to an edge in $[23]$, without loss of generality assume it attaches to $23$, and assuming the second endpoint does not attach to an edge in neither $[AA']$ nor $[A2]$, we get the following class of edges in $E(F_{11})$ for the second endpoint of $e$ to attach to.
                \begin{multicols}{2}
                    \begin{enumerate}[(1)]
                        \setcounter{enumi}{9}
                        \item $(67)=\{67\}$
                    \end{enumerate}
                \end{multicols}
        \end{itemize}

    Using Remark \ref{rmk:5New4Cycles}, we observe that the graphs from Cases (5) and (6) are isomorphic.

    By \cite{Kramer24}, there are exactly two inequivalent unlabelled embeddings of $F_{11}$ into torus. It is easy to verify that for cases (1) - (4) and (7), the following labelled embedding of $F_{11}$ extends to an embedding of $F_{11} \cup e$.
        \begin{figure}[H]
                \centering
                \includegraphics[height=3.5cm]{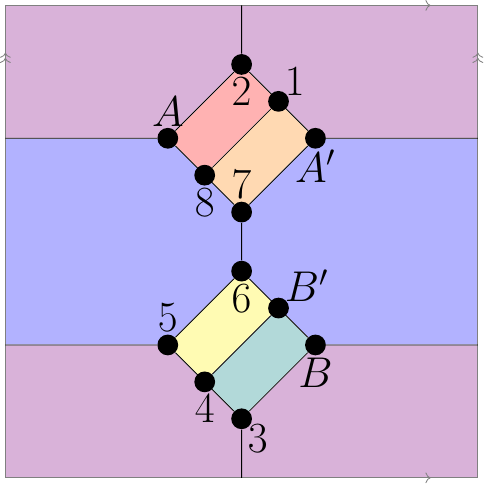}
            \end{figure} 

            Recalling the graphs $H_1$ through $H_4$ from Figure \ref{fig:H1ThroughH4}, we observe that
            \begin{multicols}{2}
                \begin{enumerate}[(1)]
                    \setcounter{enumi}{4}
                    \item $\cong F_{11} \cup (AA'-18) \cong H_1$,
                    \setcounter{enumi}{7}
                    \item $\cong F_{11} \cup (A2-B'6) \cong H_4$,
                    \newcolumn
                    \item $\cong F_{11} \cup (A2-67) \cong H_2$, and
                    \item $\cong F_{11} \cup (23-67) \cong H_3$.
                \end{enumerate}
            \end{multicols}
    \vspace{-1cm}
\end{proof}

\begin{prop}
    \label{prop:F12Torus}
    $F_{12} \cup e$ is toroidal for all choices of $e$.
\end{prop}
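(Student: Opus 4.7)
The plan is to follow the template of Proposition \ref{prop:F11Torus}. Since $F_{12}$ is toroidal by \cite{Kramer24}, the argument inside Proposition \ref{prop:ObstructionGirth} shows that any toroidal embedding of the appropriate subdivision of $F_{12}$ extends to one of $F_{12} \cup e$ whenever $\girth(F_{12} \cup e) \leq 3$: a new loop or parallel edge can be drawn in a face incident to its base edge, and a new $3$-cycle can always be completed inside a face containing the other two edges of the triangle (as every cubic graph admits a face with both of these edges on its boundary). This reduces the proof to the cases where $\girth(F_{12} \cup e) \geq 4$.

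Next, I would compute the orbits of $E(F_{12})$ under $\mathrm{Aut}(F_{12})$, fixing the first endpoint of $e$ up to isomorphism. For each such orbit representative, the orbits of the remaining edges under the corresponding stabilizer subgroup determine the second endpoint, producing a finite list of candidate pairs. Remark \ref{rmk:5New4Cycles} then collapses the at most two isomorphism classes of $F_{12} \cup e$ that can arise from a given pair of non-incident edges, and any case introducing a loop, a double edge, or a $3$-cycle is already handled by the previous paragraph.

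For each remaining case I would exhibit a toroidal embedding of $F_{12}$ in which the two edges receiving the endpoints of $e$ lie on the boundary of a common face; the new edge $e$ can then be routed across that face without crossings. Because \cite{Kramer24} already classifies the inequivalent toroidal embeddings of $F_{12}$, I may freely choose the embedding best suited to each pair of edges, rather than insisting on a single universal embedding as was done for $F_{11}$. In each case this amounts to drawing a labelled picture of $F_{12}$ on a fundamental polygon of the torus and reading off a face that touches both target edges.

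The main obstacle is precisely this cofaciality check: unlike in Proposition \ref{prop:F11Torus}, where four cases fail and produce the obstructions $H_1,\ldots,H_4$, here every admissible pair must succeed, so no candidate obstruction is to be extracted from $F_{12}$. I expect the analysis to be tedious but mechanical once a small number of toroidal embeddings of $F_{12}$ are drawn explicitly; the real risk is combinatorial bookkeeping (missing a pair of edges or misidentifying an isomorphism class) rather than any conceptual difficulty.
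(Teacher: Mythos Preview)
Your proposal is correct and matches the paper's proof essentially step for step: the paper computes the three edge orbits of $F_{12}$, lists the (at most fourteen) girth-$\geq 4$ attachments up to isomorphism, uses Remark~\ref{rmk:5New4Cycles} to merge a pair of cases, and then verifies that each case extends inside one of three labelled toroidal embeddings of $F_{12}$ drawn from the classification in \cite{Kramer24}. Your anticipation that more than one embedding would be needed, and that no obstruction candidates emerge, is exactly right.
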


\begin{proof}
    We first observe that $F_{12}$ has three types of edges, where the equivalence classes represent the orbits under automorphisms:
        \begin{multicols}{2}
            \begin{itemize}
                \item $[AA']=\{AA', A3, A'2, BB', B6, B'7, 23, 67\}$
                \item $[A8]=\{A8, A'5, B1, B'4, 12, 34, 56, 78\}$
                \item $[45]=\{45, 81\}$
            \end{itemize}
            \newcolumn
            \begin{figure}[H]
                \centering
                \includegraphics[height=3.25cm]{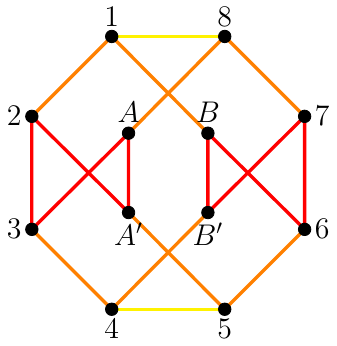}
            \end{figure}
        \end{multicols}

    Up to isomorphism, there are at most 14 ways to add one edge $e$ to $F_{12}$ such that $\girth(F_{12} \cup e) \geq 4$. We list these below. As they are obtained in a similar way as in the proof of Proposition \ref{prop:F11Torus}, we omit the details.
        \begin{multicols}{4}
            \begin{enumerate}[(1)]
                \item $AA'-BB'$
                \item $AA'-B1$
                \item $AA'-B6$
                \item $AA'-12$
                \item $AA'-23$
                \item $AA'-45$
                \item $AA'-56$
                \item $AA'-67$
                \item $A8-A'5$
                \item $A8-B1$
                \item $A8-B'4$
                \item $A8-12$
                \item $A8-45$
                \item $45-81$
                \newcolumn
            \end{enumerate}
        \end{multicols}

        Using Remark \ref{rmk:5New4Cycles}, we observe that the graphs from Cases (5) and (9) are isomorphic.

        By \cite{Kramer24}, there are exactly four inequivalent unlabelled embeddings of $F_{12}$ into the torus. For our purposes, the 6-cycle embedding and the 2-embedding suffice. Figure \ref{fig:F12Labelled} shows two labelled versions of the 6-cycle embedding, and one labelled version of the 2-embedding. For each of the 14 cases, it is easy to see that at least one of these labelled embeddings extends to an embedding of $F_{12} \cup e$.
            \begin{figure}[H]
                \centering
                \includegraphics[height=4cm]{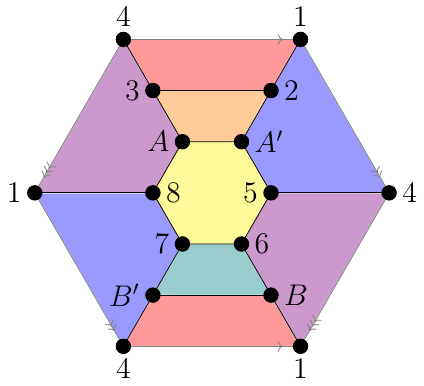} \hspace{.25cm}
                \includegraphics[height=4cm]{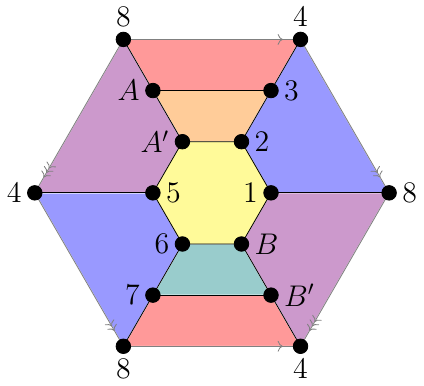} \hspace{.25cm}
                \includegraphics[height=2.5cm]{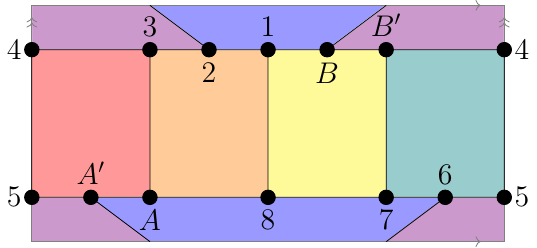}           
                \caption{Three labelled embeddings of $F_{12}$ into the torus}
                \label{fig:F12Labelled}
            \end{figure}

            \vspace{-1cm}
\end{proof}

\begin{prop}
    \label{prop:F13Torus}
    $F_{13} \cup e$ is toroidal for all choices of $e$.
\end{prop}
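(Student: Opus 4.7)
The plan is to mirror the structure of the proofs of Propositions \ref{prop:F11Torus} and \ref{prop:F12Torus}. First, I would compute the orbits of $E(F_{13})$ under the action of $\operatorname{Aut}(F_{13})$, labeling a standard drawing of $F_{13}$ and identifying the edge-orbit classes. This gives us a small list of representative edges and, after fixing one endpoint on a representative, a further list of representatives for the second endpoint obtained from the stabilizer of that edge.

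Next, I would enumerate all ways (up to isomorphism) to add one edge $e$ to $F_{13}$ such that $\girth(F_{13} \cup e) \geq 4$, using Remark \ref{rmk:5AddingEdge}(3) to discard the configurations illustrated in Figure \ref{fig:ExcludingCases}. In particular, I would use Remark \ref{rmk:5New4Cycles} to further collapse pairs of candidates that produce a new $4$-cycle through a common outside edge, since these give at most two isomorphism classes, one of which often coincides with another case already listed. This should leave a short, manageable list of cases analogous to the 10 cases for $F_{11}$ and 14 for $F_{12}$.

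Then, for each remaining case, I would invoke the classification from \cite{Kramer24} of inequivalent unlabelled toroidal embeddings of $F_{13}$ and exhibit labelled representatives. For each candidate edge $e = uv$ (where $u$ subdivides $e_1$ and $v$ subdivides $e_2$), it suffices to find a labelled embedding such that $e_1$ and $e_2$ appear on the boundary of a common face; then $e$ may be drawn inside that face without crossings, extending the embedding. I would present a small number of labelled toroidal embeddings of $F_{13}$ (analogous to Figure \ref{fig:F12Labelled}) and check case by case that every candidate $F_{13} \cup e$ is covered by at least one of them.

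The main obstacle is the bookkeeping: verifying that the collection of labelled embeddings chosen is sufficient to cover every case, and not overlooking a candidate where no single embedding simultaneously places both affected edges on a common face. If a particular case is not covered by any embedding I have already drawn, I would either rotate/relabel an existing embedding using a symmetry of the torus or construct an additional embedding ad hoc, rather than attempting a full classification of isomorphism types of $F_{13} \cup e$. Since the conclusion is that \emph{every} $F_{13} \cup e$ is toroidal (no exceptions as in Proposition \ref{prop:F11Torus}), success consists in confirming that no $e$ survives as an obstruction candidate.
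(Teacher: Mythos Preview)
Your proposal is correct and follows essentially the same approach as the paper: compute the edge-orbits of $F_{13}$ (the paper finds two, $[A1]$ and $[12]$), enumerate up to isomorphism the at most ten ways to attach $e$ with $\girth(F_{13}\cup e)\geq 4$, apply Remark~\ref{rmk:5New4Cycles} to identify an isomorphism among them, and then verify via \cite{Kramer24} that a single unlabelled toroidal embedding of $F_{13}$ (the $9$-cycle embedding, with the cycle labelled $1$ through $9$ starting at any vertex) extends to $F_{13}\cup e$ in every case. The only minor simplification the paper achieves over your outline is that one embedding with flexible starting label suffices, rather than several distinct labelled embeddings.
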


\begin{proof} We first observe that $F_{13}$ has two types of edges, where the equivalence classes represent the orbits under automorphisms:
        \begin{multicols}{2}
            \begin{itemize}
                \item $[A1]=\{A1, A4, A7, B2, B5, B8, C3, C6, C9\}$
                \item $[12]=\{12, 23, 34, 45, 56, 67, 78, 89, 91\}$
            \end{itemize}
            \newcolumn
            \begin{figure}[H]
                \centering
                \includegraphics[height=3cm]{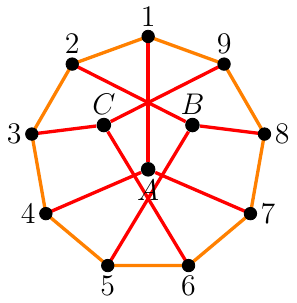}
            \end{figure}
        \end{multicols}

        Similar to Propositions \ref{prop:F11Torus} and \ref{prop:F12Torus}, up to isomorphism there are at most ten ways to add one edge $e$ to $F_{13}$ such that $\girth(F_{13} \cup e) \geq 4$.
        \begin{multicols}{4}
            \begin{enumerate}[(1)]
                \item $A1-B2$
                \item $A1-C3$
                \item $A1-B5$
                \item $A1-23$
                \item $A1-34$
                \item $A1-45$
                \item $A1-56$
                \item $12-34$
                \item $12-45$
                \item $12-56$
            \end{enumerate}
        \end{multicols}
        
        Using Remark \ref{rmk:5New4Cycles}, we observe that the graphs from Cases (1) and (8) are isomorphic.

        By \cite{Kramer24}, there are exactly two inequivalent unlabelled embeddings of $F_{13}$ into the torus. For our purposes, the 9-cycle embeddings suffices. Labelling the vertices on the cycle highlighted in Figure \ref{fig:F13EmbeddingWith9Cycle} starting at any vertex $1$ through $9$ in order, it is easy to check that this embedding extends to an embedding of $F_{13} \cup e$ into the torus in each of the ten cases.
            \begin{figure}[H]
                \centering
                \includegraphics[height=3.5cm]{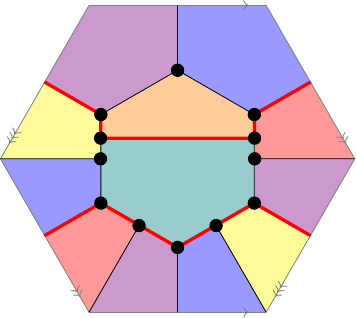}\\
                \caption{Partially labelled embedding of $F_{13}$ into the torus}
                \label{fig:F13EmbeddingWith9Cycle}
            \end{figure}

        \vspace{-1cm}
\end{proof}

\begin{prop}
    \label{prop:F14Torus}
    $F_{14} \cup e$ is toroidal for all choices of $e$.
\end{prop}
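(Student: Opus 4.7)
The plan is to follow the template established in the proofs of Propositions \ref{prop:F11Torus}, \ref{prop:F12Torus}, and \ref{prop:F13Torus}. First, I would determine the orbits of $E(F_{14})$ under the automorphism group of $F_{14}$. Since $F_{14}$ has $12$ vertices, $18$ edges, and a reasonably symmetric structure, I expect only a handful of edge orbits, which can be identified directly from a symmetric drawing together with a labelling of the vertices. Having fixed representatives for these orbits, I would then enumerate up to isomorphism the ways of adjoining an edge $e$ to $F_{14}$ such that $\girth(F_{14} \cup e) \geq 4$; by Proposition \ref{prop:ObstructionGirth} and Remark \ref{rmk:5AddingEdge}, this is all we need consider, since any graph obtained with a shorter girth is automatically toroidal via the fact that $F_{14}$ is toroidal.

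To cut down the enumeration further, I would apply Remark \ref{rmk:5New4Cycles} to identify pairs of cases producing isomorphic graphs when $e$ sits on a new $4$-cycle. After fixing one endpoint of $e$ up to automorphism and iterating over orbit representatives for the second endpoint under the stabilizer of the first, I expect to obtain a short explicit list of cases, mirroring the lists of Propositions \ref{prop:F12Torus} and \ref{prop:F13Torus}.

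Next, I would invoke the classification in \cite{Kramer24} of inequivalent unlabelled toroidal embeddings of $F_{14}$. For each case on the enumeration list, the task is to exhibit one labelled embedding from this classification in which the two edges carrying the endpoints of $e$ lie on the boundary of a common face. The new edge $e$ can then be routed through that face, yielding a toroidal embedding of $F_{14} \cup e$. As in the previous propositions, I would present this as figures showing the labelled embeddings with the relevant faces highlighted, and observe that between the available labellings of the various inequivalent embeddings, every case is covered.

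The main obstacle will be the bookkeeping step of verifying, for every case on the enumeration list, that some labelled toroidal embedding of $F_{14}$ places the two chosen edges on a common face. If a single embedding does not suffice, several labellings (coming from different starting points along a distinguished cycle, or from different inequivalent embeddings) will need to be used, just as three labelled embeddings were required in the proof of Proposition \ref{prop:F12Torus}. I expect that the richness of the embedding set from \cite{Kramer24} for $F_{14}$ makes this routine, and that unlike $F_{11}$ no exceptional cases arise, which is exactly why the conclusion is that $F_{14} \cup e$ is always toroidal.
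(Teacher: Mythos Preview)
Your plan is correct and matches the paper's own proof essentially step for step: three edge orbits are identified, at most twelve cases with $\girth(F_{14}\cup e)\geq 4$ are listed, Remark~\ref{rmk:5New4Cycles} collapses two pairs of them, and the $8$-cycle toroidal embedding of $F_{14}$ from \cite{Kramer24} (relabelled by shifting the starting vertex along the highlighted $8$-cycle) is checked to extend in every case. No exceptional graphs arise, exactly as you anticipate.
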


\begin{proof}
    We first observe that $F_{14}$ has three types of edges, where the equivalence classes represent the orbits under automorphisms:
        \begin{multicols}{2}
            \begin{itemize}
                \item $[AA']=\{AA', BB'\}$
                \item $[A1]=\{A1, A5, A'3, A'7, B2, B6, B'4, B'8\}$
                \item $[12]=\{12, 23, 34, 45, 56, 67, 78, 81\}$
            \end{itemize}
            \newcolumn
            \begin{figure}[H]
                \centering
                \includegraphics[height=3.5cm]{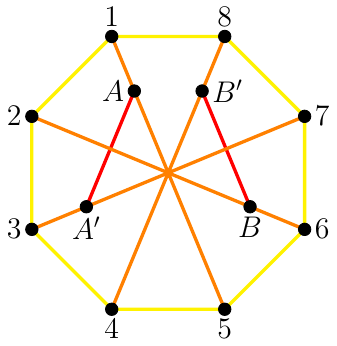}
            \end{figure}
        \end{multicols}        

        Similar to Propositions \ref{prop:F11Torus} - \ref{prop:F13Torus}, up to isomorphism there are at most twelve ways to add one edge $e$ to $F_{14}$ such that $\girth(F_{14} \cup e) \geq 4$.
        \begin{multicols}{4}
            \begin{enumerate}[(1)]
                \item $AA'-BB'$
                \item $AA'-B2$
                \item $AA'-12$
                \item $A1-A'3$
                \item $A1-B2$
                \item $A1-B6$
                \item $A1-23$
                \item $A1-34$
                \item $A1-45$
                \item $12-34$
                \item $12-45$
                \item $12-56$
            \end{enumerate}
        \end{multicols}

        We observe that  Remark \ref{rmk:5New4Cycles} gives the isomorphisms (3) $\cong$ (9) and (5) $\cong$ (10).

    By \cite{Kramer24}, there are exactly two inequivalent unlabelled embeddings of $F_{14}$ into the torus. For our purposes, the 8-cycle embedding suffices. Labelling the vertices on the cycle highlighted in Figure \ref{fig:F14HighlightedEgde} starting at any vertex $1$ through $8$ in order, it is easy to check that this embedding extends to an embedding of $F_{14} \cup e$ into the torus in each of the twelve cases.
            \begin{figure}[H]
                \centering
                \includegraphics[height=3.5cm]{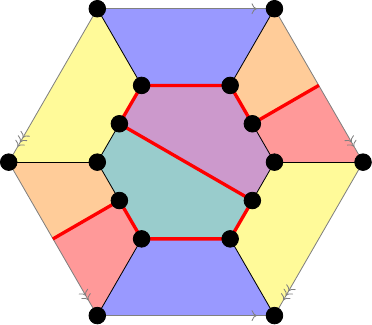}
                \caption{Partially labelled embedding of $F_{14}$ into the torus}
                \label{fig:F14HighlightedEgde}
            \end{figure}

        \vspace{-1cm}
\end{proof}

\section{Cubic torus obstruction candidates from \texorpdfstring{$G_{1}$}{G\_{1}}}\label{sec:extendingG1}
Recall that $b_1(G_1)=\frac{1}{2}|V(G_1)|+b_0(G_1)=\frac{1}{2}\cdot 10 + 1 = 6$. Hence, as $G_1$ is connected, Remark \ref{rmk:5AddingEdge} implies that $b_1(G_1 \cup e_1) = 7$ and $b_1(G_1 \cup e_1 \cup e_2) = 8$. By \cite{Kramer24}, $G_1$ is toroidal.

The graph $G_1$ has two types of edges, where the equivalence classes represent the orbits under automorphisms:
\begin{multicols}{2}
    \begin{itemize}
        \item $[AA']=\{AA', BB', CC'\}$
        \item $[A1]=\{A1, A2, B1, B2, C1, C2, \\ \text{} \hspace{1.4cm} A'3, A'4, B'3, B'4, C'3, C'4\}$
    \end{itemize}
    We call the edges in $[AA']$ bridges and the edges in $[A1]$ $K_{2,3}-$edges.
    \newcolumn
    \begin{figure}[H]
        \centering
        \includegraphics[height=4cm]{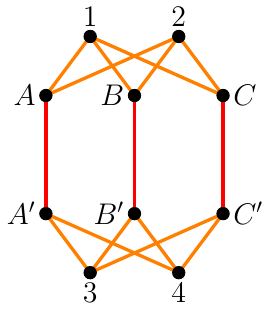}
    \end{figure}
\end{multicols}

In addition to using the symmetries of $G_1$ to detect obvious isomorphisms when attaching one or two edges to $G_1$, we use the following lemma. Observe that $G_1$ is $3$-connected. Recall that a $3$-connected cubic graph is \textit{cyclically $4$-connected} if any cut set of size three consists of edges all incident to a common vertex.

\begin{lemma}
    \label{lem:LessG1Cases}
    The following statements hold.
    \begin{enumerate}[(a)]
        \item If $e$ is an edge connecting a bridge to a $K_{2,3}-$edge, then $G_1 \cup e \cong G_1\cup e'$, where $e'$ connects two $K_{2,3}-$edges in the same $K_{2,3}$.
        \item If $G_1 \cup e_1 \cup e_2$ is cyclically $4$-connected, then $G_1 \cup e_1 \cup e_2 \cong G_1 \cup e'_1 \cup e'_2$ such that $G_1 \cup e'_1$ is cyclically $4$-connected and $e'_2$ attaches to a bridge only if both endpoints attach to a bridge.
        \item If $G_1 \cup e_1 \cup e_2$ is not cyclically $4$-connected, then $G_1 \cup e_1 \cup e_2 \cong G_1 \cup e'_1 \cup e'_2$ such that neither $e'_1$ nor $e'_2$ connect a bridge to a $K_{2,3}-$edge. Moreover, if $e'_2$ is attached to $e'_1$, we may assume its other endpoint attaches to a $K_{2,3}-$edge only if $e'_1$ connects two $K_{2,3}-$edges. Similarly, we may assume its other endpoint attaches to a bridge only if $e'_1$ connects two bridges.
    \end{enumerate}
\end{lemma}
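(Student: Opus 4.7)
The plan is a case analysis organized around the two edge orbits of $G_1$, the $Y$-$\Delta$ equivalence from Remark \ref{rmk:CubicYDelta}, and the structural fact (verifiable by a short cut enumeration, using that $G_1$ is triangle-free and that every $5$-vertex subset with $6$ interior edges is one of the two $K_{2,3}$-blocks) that the only non-trivial $3$-edge-cut of $G_1$ is the bridge cut $\{AA', BB', CC'\}$ separating the two $K_{2,3}$-blocks.

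For part (a), suppose $e$ joins a bridge $b$ and a $K_{2,3}$-edge $k$. If $b$ and $k$ share a vertex $v$, then $G_1 \cup e$ contains a new triangle at $v$; by Remark \ref{rmk:CubicYDelta} its isomorphism class depends only on the orbit of $v$, so realizing the same cubic $Y$-$\Delta$ by joining instead the two $K_{2,3}$-edges incident to $v$ yields the required $e'$. If $b$ and $k$ share no vertex, automorphisms of $G_1$ reduce us to the representative case $b = AA'$, $k = B1$; one then exhibits an explicit bijection $V(G_1 \cup (AA', B1)) \to V(G_1 \cup (A2, B1))$ in which the subdivision vertex of $AA'$ and the original vertex $A$ swap roles, and verifies edge-preservation directly. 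Candidate bijections are narrowed using invariants such as the number of $4$-cycles through each vertex.

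For parts (b) and (c), a short enumeration over the pairs of edge-orbits that $e$ can join shows that $G_1 \cup e$ is cyclically $4$-connected if and only if $e$ has one endpoint on a $K_{2,3}$-edge of the top block and the other on a $K_{2,3}$-edge of the bottom block, with neither endpoint on a bridge; every other attachment leaves the bridge cut (possibly rerouted through subdivision vertices) intact as a non-trivial $3$-cut. Part (b) then follows because cyclic $4$-connectedness of $G_1 \cup e_1 \cup e_2$ forces one of $e_1, e_2$ to be such a clean top-to-bottom spanning edge, which we designate $e'_1$; if the remaining edge $e'_2$ has exactly one bridge endpoint, part (a) applied within the subdivision $G_1 \cup e'_2$ trades it for a $K_{2,3}$-to-$K_{2,3}$ same-block attachment. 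For part (c), since neither $e_i$ is a clean spanning edge, applying (a) to each of $e_1$ and $e_2$ eliminates any bridge-to-$K_{2,3}$ attachments; the further restrictions when $e'_2$ is attached to $e'_1$ come from applying the same $Y$-$\Delta$/automorphism argument inside $G_1 \cup e'_1$, viewing the subdivision vertices and new edge of $e'_1$ as inheriting the bridge/$K_{2,3}$ classification of the underlying $G_1$-edges.

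The principal obstacle is the explicit isomorphism in case (a) when $b$ and $k$ share no vertex: the vertex bijection is not transparent from the drawing of $G_1$, and constructing it requires both narrowing candidates via vertex invariants and then a case-free verification of all $18$ edges. Once (a) is in hand, the remainder is essentially bookkeeping based on the dichotomy ``$e$ is clean spanning versus not,'' together with the additional effort in parts (b) and (c) of checking that the isomorphism produced by (a) is compatible with the simultaneous presence of the other added edge.
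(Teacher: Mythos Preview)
Your approach to part (a) is sound, though the paper avoids the laborious explicit bijection in the non-adjacent case: when $e$ joins $AA'$ to $B1$, the new edge lies in a $4$-cycle opposite the original edge $A1$, and Remark~\ref{rmk:5New4Cycles} immediately yields the isomorphism to $G_1 \cup (A2{-}C1)$ without constructing a vertex map by hand.

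There is, however, a genuine gap in your argument for part (b). You assert that cyclic $4$-connectedness of $G_1 \cup e_1 \cup e_2$ forces at least one of $e_1, e_2$ to already be a clean top-to-bottom spanning edge. This is false. Take $e_1$ joining the bridge $AA'$ to the top $K_{2,3}$-edge $B1$, and $e_2$ joining the new edge $e_1$ to a bottom $K_{2,3}$-edge such as $B'3$: then $G_1 \cup e_1 \cup e_2$ is cyclically $4$-connected, yet neither $e_i$ is a clean spanning edge in your sense. A second counterexample, with neither $e_i$ attached to the other, arises when $e_1$ and $e_2$ both attach to the same bridge but land in different $K_{2,3}$ blocks. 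Your characterization of when $G_1 \cup e$ (one added edge) is cyclically $4$-connected is correct, but it does not lift to two edges the way you claim, precisely because $e_2$ may attach to $e_1$ or share a subdivided bridge with it. The paper's proof of (b) opens by isolating exactly these configurations and supplying, in each case, an explicit isomorphism to some $G_1 \cup e'_1 \cup e'_2$ with $G_1 \cup e'_1$ cyclically $4$-connected; only after that reduction does an argument like yours proceed. Your outline skips this step entirely, and the subsequent application of part (a) to $e'_2$ also needs the careful tracking (which the paper carries out) of how the isomorphism from (a) moves $e'_1$.
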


\begin{proof}
    \text{}
    \begin{enumerate}[(a)]
        \item There are two distinct ways to add an edge $e$ to $G_1$ connecting a bridge to a $K_{2,3}-$edge. One endpoint of $e$ attaches to an edge in $[AA']$, without loss of generality assume it attaches to $AA'$. Then the second endpoint either attaches to an edge in $(A1)=\{A1, A2, A'3, A'4\}$ giving $\girth(G_1 \cup e_1)=3$, or to an edge in $(B_1)=\{B1, B2, C1, C2,$ $B'3, B'4, C'3, C'4\}$.           
        If $e=AA'-A1$, then $G_1 \cup e \cong G_1\cup e'$ with $e'=A1-A2$. Note that this also follows from Remark \ref{rmk:CubicYDelta}. Similarly, if $e=AA'-B1$, then $G_1 \cup e \cong G_1 \cup e'$ with $e'=A2-C1$ by Remark \ref{rmk:5New4Cycles}. 
        \begin{figure}[H]
            \centering
            \includegraphics[height=3cm]{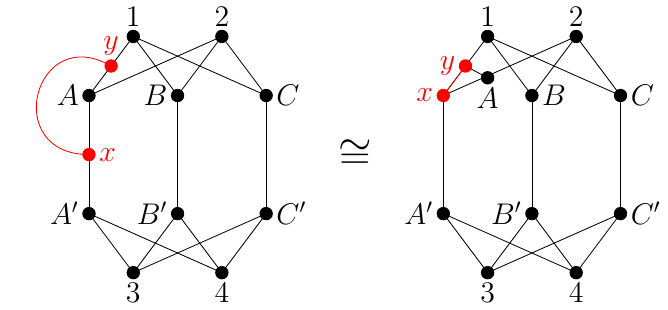} \hspace{.5cm}
            \includegraphics[height=3cm]{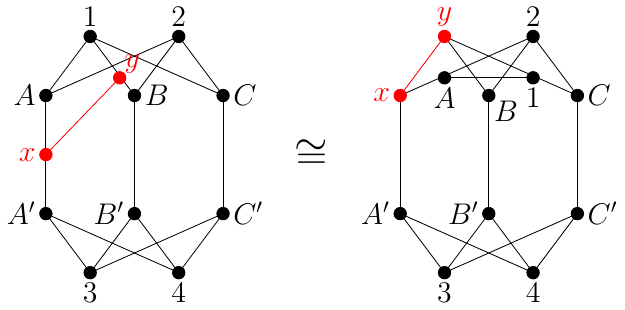}
        \end{figure}

        \item We first observe that it is possible to attach two edges $e_1, e_2$ such that $G_1 \cup e_1 \cup e_2$ is cyclically $4$-connected, but neither $G_1 \cup e_1$ nor $G \cup e_2$ is. For example, $e_1$ connects a bridge to a $K_{2,3}-$edge and $e_2$ connects $e_1$ to an edge in the previously unused $K_{2,3}$ subgraph. It is now easy to see that $G_1 \cup e_1 \cup e_2 \cong G_1 \cup e'_1 \cup e'_2$ such that $G_1 \cup e'_1$ is cyclically $4$-connected and $e'_2$ connects $e'_1$ to a bridge. Another possibility is that $e_1$ and $e_2$ connect the same bridge to a $K_{2,3}-$edge in different $K_{2,3}$ subgraphs.
            Using part (a), we quickly see that $G_1 \cup e_1 \cup e_2 \cong G_1 \cup e'_1 \cup e'_2$ such that $G_1 \cup e'_1$ is cyclically $4$-connected. 
            
            So, we may assume from now on that $G_1 \cup e_1$ is cyclically $4$-connected. In particular, $e_1$ attaches to exactly one edge in each of the $K_{2,3}$ subgraphs. It remains to show that one endpoint of $e_2$ attaches to a bridge only if its other endpoint does as well.
            Suppose that $e_2$ connects a bridge to a $K_{2,3}-$edge. By (a) there exists an isomorphism such that $G_1 \cup e_1 \cup e_2 \cong G_1 \cup e'_1 \cup e'_2$ with $e'_2$ connecting a $K_{2,3}-$edge to a $K_{2,3}-$edge in the same $K_{2,3}$ subgraph, while $e'_1$ either is such that $G \cup e'_1$ is cyclically $4$-connected, or $e'_1$ attaches to $e'_2$. In the latter, it is easy to see that $G_1 \cup e'_1 \cup e'_2 \cong G_1 \cup e''_1 \cup e''_2$ such that $G \cup e''_1$ is cyclically $4$-connected and $e''_2$ connects $e''_1$ to a $K_{2,3}-$edge.
            Finally, suppose that $e_2$ connects a bridge to $e_1$. Using the isomorphisms from (a), we see that $G_1 \cup e_1 \cup e_2 \cong G_1 \cup e'_1 \cup e'_2$, where $G \cup e'_1$ is cyclically $4$-connected and $e'_2$ connects two $K_{2,3}-$edges in the same $K_{2,3}$ subgraph. 

        \item We first show that neither $e'_1$ nor $e'_2$ connect a bridge to a $K_{2,3}-$edge.
            Suppose $e_1$ connects a bridge to a $K_{2,3}-$edge. It follows from (a) that $G_1 \cup e_1 \cup e_2 \cong G_1 \cup e'_1 \cup e'_2$, where $e'_1$ connects two $K_{2,3}-$edges in the same $K_{2,3}$ subgraph. 
            Note that applying an isomorphism from (a) never turns a $K_{2,3}-$edge into a bridge. Moreover, in the not cyclically $4$-connected case, it never turns a new edge $e_i$ into a bridge.
            This observation immediately implies that if neither endpoint of $e_2$ was attached to a bridge, then neither endpoint of $e'_2$ is attached to a bridge.
            If exactly one endpoint of $e_2$ was attached to a bridge, then either neither endpoint of $e'_2$ is attached to a bridge or exactly one endpoint of $e'_2$ is attached to a bridge. In the latter case it follows from (a) that $G_1 \cup e'_1 \cup e'_2 \cong G_1 \cup e''_1 \cup e''_2$, where neither $e''_1$ nor $e''_2$ attaches to a bridge.
            If $e_2$ connected two bridges, then either both endpoints of $e'_2$ are attached to bridges or exactly one endpoint of $e'_2$ is attached to a bridge. In the latter case we can again apply one of the isomorphisms from (a) to obtain $G_1 \cup e'_1 \cup e'_2 \cong G_1 \cup e''_1 \cup e''_2$ such that neither $e''_1$ nor $e''_2$ attach to a bridge.

            In particular, we may assume from now on that $e_1$ either connects two bridges or two $K_{2,3}-$edges. We now consider the cases where $e_2$ attaches to $e_1$.
            First we assume $e_2$ connects $e_1$ to a $K_{2,3}-$edge. If $e_1$ connects two bridges, then $G_1 \cup e_1 \cup e_2 \cong G_1 \cup e'_1 \cup e'_2$, where $e'_1$ connects a bridge to a $K_{2,3}-$edge and $e'_2$ connects $e'_1$ to a bridge. Our previous arguments give an isomorphism $G_1 \cup e'_1 \cup e'_2 \cong G_1 \cup e''_1 \cup e''_2$ where neither $e''_1$ nor $e''_2$ attaches to a bridge. Thus, we may assume $e_1$ connects two $K_{2,3}-$edges.
            Finally, we assume $e_2$ connects $e_1$ to a bridge. If $e_1$ connects two $K_{2,3}-$edges, then $G_1 \cup e_1 \cup e_2 \cong G_1 \cup e'_1 \cup e'_2$, where $e'_1$ connects a bridge to a $K_{2,3}-$edge and $e'_2$ connects $e'_1$ to a $K_{2,3}-$edge. Our previous arguments give an isomorphism $G_1 \cup e'_1 \cup e'_2 \cong G_1 \cup e''_1 \cup e''_2$ where neither $e''_1$ nor $e''_2$ attaches to a bridge. Thus, to obtain a graph that is potentially non-isomorphic to a previously considered graph, we need $e_1$ to connect two bridges.
    \end{enumerate}
    \vspace{-.7cm}
\end{proof}

\begin{prop}
    \label{prop:G1InTorus}
    $G_1 \cup e$ is toroidal for all choices of $e$.
\end{prop}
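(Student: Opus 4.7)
The plan is to mirror the strategy used for $F_{11}$ through $F_{14}$: start from $G_1$, which is toroidal by \cite{Kramer24}, enumerate the ways of attaching an edge $e$ up to the automorphisms of $G_1$ (while cutting down cases with the general remarks), and exhibit a labelled toroidal embedding of $G_1$ that extends across $e$ in each remaining case.

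First I would reduce the case list. By Proposition \ref{prop:ObstructionGirth} we only need to consider $e$ such that $\girth(G_1 \cup e) \geq 4$, which already excludes any $e$ attached twice to edges inside the same $K_{2,3}$-edge and the $Y$--$\Delta$ reduction cases of Remark \ref{rmk:CubicYDelta}. Using the orbit description of $E(G_1)$ under its automorphism group, every $e$ falls into one of the following three families up to isomorphism: (i) $e$ joins two bridges, so without loss of generality $AA'$ to $BB'$; (ii) $e$ joins a bridge to a $K_{2,3}$-edge; (iii) $e$ joins two $K_{2,3}$-edges, and by symmetry of $[A1]$ this breaks into joining two edges in the same $K_{2,3}$, the two $K_{2,3}$'s sharing the vertex $B$ (or equivalently $B'$), or two $K_{2,3}$'s attached to the same bridge $AA'$. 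Lemma \ref{lem:LessG1Cases}(a) collapses case (ii) into a sub-case of (iii), and Remark \ref{rmk:5New4Cycles} further identifies some of the $K_{2,3}$-to-$K_{2,3}$ pairs. This leaves a short explicit list of representatives (one bridge-bridge attachment, and a handful of $K_{2,3}$-edge to $K_{2,3}$-edge attachments distinguished by which pair of $K_{2,3}$'s are used and on which side of the bridge).

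Next I would use the classification of toroidal embeddings of $G_1$ from \cite{Kramer24} to pick one or two labelled embeddings that have enough long facial walks to accommodate the attaching edge. The key observation is that in any such embedding, the three bridges $AA', BB', CC'$ all lie on a common face in at least one embedding, while the six $K_{2,3}$-edges around a given bridge share a face in another embedding. So I would present two labelled embeddings of $G_1$ into $T^2$: one whose facial structure lets every bridge-bridge or bridge-adjacent attachment be drawn inside a single face (handling case (i) and the reduced form of case (ii)), and one whose facial structure lets any two $K_{2,3}$-edges from a representative pair of $K_{2,3}$-subgraphs lie on a common face (handling the remaining sub-cases of case (iii)). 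Drawing the new edge inside that common face yields a toroidal embedding of $G_1 \cup e$.

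The routine but tedious part is checking that the chosen embeddings really do cover every representative case; this is exactly the same sort of face-tracing verification carried out in Propositions \ref{prop:F11Torus}--\ref{prop:F14Torus}. The main obstacle is case (iii) when the two $K_{2,3}$-edges lie in different $K_{2,3}$-subgraphs attached to opposite ends of the same bridge, since in several embeddings these edges are separated by the bridge and never bound a common face; I expect to need to exhibit a second, suitably rotated labelling of the toroidal embedding of $G_1$, exactly as was done for $F_{12}$ in Figure \ref{fig:F12Labelled}, so that every such pair of $K_{2,3}$-edges shares a face in at least one of the labellings. Once the labelled embeddings are in hand, each case is settled by inspection.
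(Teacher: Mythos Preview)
Your plan is essentially the paper's own argument: enumerate the attachments of $e$ up to the automorphisms of $G_1$, use Lemma~\ref{lem:LessG1Cases}(a) to fold the bridge--to--$K_{2,3}$ case into the $K_{2,3}$--to--$K_{2,3}$ case, and then exhibit two labelled toroidal embeddings of $G_1$ whose faces accommodate every surviving representative. The paper does exactly this, with two labelled embeddings covering all cases.

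Two small points. First, your subcase description of family~(iii) is garbled: $G_1$ has exactly two $K_{2,3}$ subgraphs, one on $\{A,B,C,1,2\}$ and one on $\{A',B',C',3,4\}$, joined by the three bridges, so phrases like ``two $K_{2,3}$'s sharing the vertex $B$'' do not parse. After the girth~$\geq 4$ reduction the honest list of representatives is just $AA'{-}BB'$, $A1{-}B2$, $A1{-}A'3$, and $A1{-}B'3$; the same-$K_{2,3}$ attachments $A1{-}A2$ and $A1{-}B1$ create $3$-cycles and are already handled by the girth argument. Second, the paper deliberately \emph{retains} those two girth-$3$ cases in its list (and embeds them too), because they are needed as base cases in the subsequent analysis of $G_1\cup e_1\cup e_2$; your reduction is correct for the present proposition but would have to be undone later.
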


\begin{proof}
    There are at most six distinct ways to add one edge $e_1$ to $G_1$, two of which give $\girth(G_1 \cup e)=3$. Unlike in the previous sections, we keep track of the cases where adding one edge gives girth three as it is possible that $\girth(G_1 \cup e_1 \cup e_2)=4$ if $\girth(G_1 \cup e_1)=3$. Hence, $G_1 \cup e_1 \cup e_2$ could be an obstruction.

    If one endpoint of $e_1$ attaches to an edge in $[AA']$, without loss of generality assume it attaches to $AA'$, using Lemma \ref{lem:LessG1Cases} we get that its second endpoint attaches to an edge in $(BB')=\{BB', CC'\}$.

    If one endpoint of $e_1$ attaches to an edge in $[A1]$, without loss of generality assume it attaches to $A1$, and assuming the second endpoint does not attach to an edge in $[AA']$, we get the following five classes of edges in $E(G_1)$ for the second endpoint of $e_1$ to attach to.
                \begin{multicols}{3}
                    \begin{itemize}
                        \item $(A2)=\{A2\}$
                        \item $(B1)=\{B1, C1\}$
                        \item $(B2)=\{B2, C2\}$
                        \item $(A'3)=\{A'3, A'4\}$
                        \item $(B'3)=$\\$\{B'3, B'4, C'3, C'4\}$ 
                    \end{itemize}
                \end{multicols}

        Note that the last two graphs are cyclically $4$-connected, while the others are not.
        For each of the six cases, at least one of the two labelled embedding of $G_1$ into the torus pictured below extends to an embedding of $G_1 \cup e$. 
        \begin{figure}[H]
            \centering
            \includegraphics[height=3.5cm]{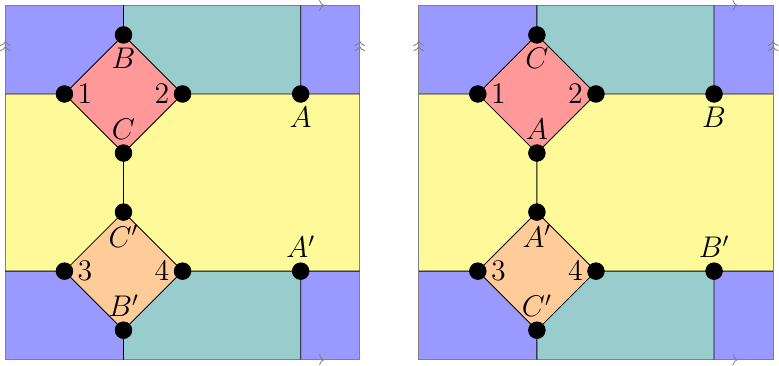}
        \end{figure}
\vspace{-1cm}
\end{proof}

\begin{prop} \label{prop:G1TwoEdgesTorus}
    $G_1 \cup e_1 \cup e_2$ is toroidal if $G_1 \cup e_1 \cup e_2 \notin \{H_4, H_5, \ldots, H_9\}$, where $H_4$ through $H_9$ are pictured below.
    \begin{figure}[H]
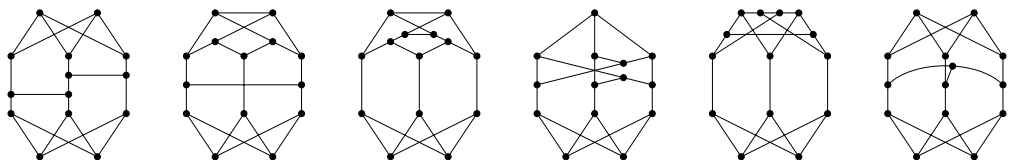

        \centering
        \includegraphics[height=2cm]{images/introduction/H4.pdf} \hspace{.5cm}
            \includegraphics[height=2cm]{images/introduction/H5.pdf} \hspace{.5cm}
            \includegraphics[height=2cm]{images/introduction/H6.pdf} \hspace{.5cm}
            \includegraphics[height=2cm]{images/introduction/H7.pdf} \hspace{.5cm}
            \includegraphics[height=2cm]{images/introduction/H8.pdf} \hspace{.5cm}
            \includegraphics[height=2cm]{images/introduction/H9.pdf}
        \caption{The graphs $H_4$ through $H_9$}
        \label{fig:H4ThroughH9}
    \end{figure}
\end{prop}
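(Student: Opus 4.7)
The plan is to follow the same case analysis strategy used in Propositions \ref{prop:F11Torus}--\ref{prop:F14Torus} and \ref{prop:G1InTorus}, but now invoking Lemma \ref{lem:LessG1Cases} to collapse most of the combinatorial explosion that would otherwise arise from choosing two edges. I would first split the argument according to whether $G_1 \cup e_1 \cup e_2$ is cyclically $4$-connected. In the cyclically $4$-connected case, Lemma \ref{lem:LessG1Cases}(b) lets me assume $G_1 \cup e_1$ is itself cyclically $4$-connected and that $e_2$ touches a bridge only when both of its endpoints do. From Proposition \ref{prop:G1InTorus}, up to isomorphism the cyclically $4$-connected graphs of the form $G_1 \cup e_1$ are the two coming from $e_1 \in \{A1\text{-}A'3,\, A1\text{-}B'3\}$, so there are only two base graphs to extend.

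For each of these two base graphs, I would compute the orbits of $E(G_1 \cup e_1)$ under the automorphism group of $G_1 \cup e_1$ (which inherits substantial symmetry from $G_1$ since the bridge/$K_{2,3}$-edge structure is preserved) and list a representative $e_2$ from each orbit allowed by Remark \ref{rmk:5AddingEdge}(3). In the complementary case, Lemma \ref{lem:LessG1Cases}(c) forces both $e_1$ and $e_2$ to be of bridge-to-bridge or $K_{2,3}$-to-$K_{2,3}$ type, except in the two residual configurations where $e_2$ is attached to $e_1$: either $e_1$ connects two bridges and $e_2$ runs from $e_1$ to a $K_{2,3}$-edge, or $e_1$ connects two $K_{2,3}$-edges and $e_2$ runs from $e_1$ to a bridge. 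Remark \ref{rmk:5New4Cycles} would then be used systematically to identify pairs of cases that are isomorphic via a shared $4$-cycle, cutting the resulting list further. Throughout, Remark \ref{rmk:5AddingEdge}(3) excludes the subcases where $\mathrm{g}(G_1 \cup e_1) \leq 2$ and where $\mathrm{g}(G_1 \cup e_1 \cup e_2) \leq 3$, although, as noted in the proof of Proposition \ref{prop:G1InTorus}, a girth-three $G_1 \cup e_1$ must be retained whenever $e_2$ can enlarge the offending $3$-cycle into a $4$-cycle.

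For every remaining candidate graph I would either identify it with one of $H_4, \ldots, H_9$, or supply a toroidal embedding by exhibiting a small stock of labelled embeddings of $G_1$ into the torus --- analogous to the two refinements used in the proof of Proposition \ref{prop:G1InTorus} --- and verifying that in each non-exceptional case both of $e_1$ and $e_2$ can be drawn disjointly inside faces of some labelled embedding. The main obstacle is the bookkeeping: unlike in the single-edge propositions, here each base configuration for $e_1$ branches into several inequivalent subcases for $e_2$, and confirming that the six listed graphs account for \emph{precisely} the non-toroidal candidates requires checking, for each non-exceptional candidate, that at least one stocked labelled embedding extends. I expect the delicate configurations to be those in which $G_1 \cup e_1 \cup e_2$ is not cyclically $4$-connected and $e_2$ is attached to $e_1$, since these most severely restrict the set of available faces and are the most likely sources of the exceptional graphs $H_5, \ldots, H_9$.
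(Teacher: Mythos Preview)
Your plan is essentially the paper's own proof: the argument is split into the cyclically $4$-connected case (handled via the two base graphs $e_1\in\{A1\text{-}A'3,\,A1\text{-}B'3\}$ using Lemma~\ref{lem:LessG1Cases}(b)) and the non-cyclically-$4$-connected case (via Lemma~\ref{lem:LessG1Cases}(c)), with each surviving candidate either embedded by extending a stocked labelled toroidal embedding of $G_1\cup e_1$ or identified with some $H_i$. Two small corrections: you have the conclusion of Lemma~\ref{lem:LessG1Cases}(c) reversed---when $e_2$ is attached to $e_1$, the surviving configurations are $e_1$ joining two $K_{2,3}$-edges with $e_2$ going to a $K_{2,3}$-edge, or $e_1$ joining two bridges with $e_2$ going to a bridge---and in fact every cyclically $4$-connected $G_1\cup e_1\cup e_2$ turns out to be toroidal, so all six exceptional graphs $H_4,\dots,H_9$ arise in the non-cyclically-$4$-connected case (not only the ``$e_2$ on $e_1$'' subcase).
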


Proposition \ref{prop:G1TwoEdgesTorus} will follow immediately from combining Lemmas \ref{lem:4connectedG1CaseA} - \ref{lem:3connectedG1}. In particular, we will show that $G_1 \cup e_1 \cup e_2 \hookrightarrow T^2$ if $G_1 \cup e_1 \cup e_2$ is cyclically $4$-connected.

\begin{lemma}
    \label{lem:4connectedG1CaseA}
    If $G_1\cup e_1$ is cyclically $4$-connected and $e_1$ is an edge in a $4$-cycle in $G_1 \cup e_1$, then $G_1 \cup e_1 \cup e_2$ is toroidal.
\end{lemma}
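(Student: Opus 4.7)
The plan is to reduce to a small number of isomorphism classes of $G_1 \cup e_1$ using Lemma \ref{lem:LessG1Cases}(b) and Remark \ref{rmk:5New4Cycles}, and then for each class exhibit labelled toroidal embeddings of $G_1 \cup e_1$ that extend to $G_1 \cup e_1 \cup e_2$ for every permissible $e_2$. Since $G_1 \cup e_1$ is cyclically $4$-connected, Lemma \ref{lem:LessG1Cases}(b) lets me assume $e_1$ connects two $K_{2,3}$-edges, one in each $K_{2,3}$ subgraph of $G_1$, and further lets me assume $e_2$ attaches to a bridge only if \emph{both} of its endpoints attach to bridges.

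Next, I use the hypothesis that $e_1$ lies in a new $4$-cycle of $G_1 \cup e_1$. The only edges of $G_1$ joining the two $K_{2,3}$ subgraphs are the three bridges $AA', BB', CC'$, so any length-$3$ path between the two subdivision vertices $u, v$ of $e_1$ must traverse a bridge. This forces $e_1$ to connect two $K_{2,3}$-edges that share a common bridge-endpoint, for example $e_1 = A1-A'3$ with the $4$-cycle $u-A-A'-v$. Using the symmetries of $G_1$ (the full permutation group on $\{A, B, C\}$ together with the swaps $1 \leftrightarrow 2$ and $3 \leftrightarrow 4$), I may assume the bridge involved is $AA'$ and the pair of $K_{2,3}$-edges is $\{A1, A'3\}$. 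Remark \ref{rmk:5New4Cycles} then gives at most two isomorphism classes for $G_1 \cup e_1$, determined by the two possible "orientations" of the subdivision points relative to the bridge $AA'$.

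For each of these (at most) two isomorphism classes, I would list the inequivalent choices of $e_2$ modulo $\operatorname{Aut}(G_1 \cup e_1)$. Thanks to Lemma \ref{lem:LessG1Cases}(b), it suffices to consider $e_2$ with both endpoints on $K_{2,3}$-edges (or on $e_1$) together with $e_2$ connecting two bridges; in particular, the mixed bridge-to-$K_{2,3}$ case is already excluded. For each case I would exhibit a labelled embedding of $G_1 \cup e_1$ into $T^2$ in which the two target edges of $e_2$ share a common face, allowing $e_2$ to be drawn inside that face without crossing. Because $G_1 \cup e_1$ admits several inequivalent toroidal embeddings (inherited from the multiple toroidal embeddings of $G_1$ analyzed in \cite{Kramer24} and used in the proof of Proposition \ref{prop:G1InTorus}), different embeddings can be selected for different choices of $e_2$.

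The main obstacle is precisely this last step: a single labelled embedding of $G_1 \cup e_1$ will generally not cover every possible $e_2$, so the argument hinges on having sufficiently many inequivalent labelled embeddings available, in the same spirit as the three labelled embeddings used in the proof of Proposition \ref{prop:F12Torus}. The symmetry reduction from Lemma \ref{lem:LessG1Cases}(b) and the $4$-cycle restriction are what keep the resulting case check finite and manageable.
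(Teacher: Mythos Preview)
Your plan matches the paper's proof: reduce to $e_1 = A1{-}A'3$, use Lemma~\ref{lem:LessG1Cases} to restrict how $e_2$ may attach, enumerate the remaining (about eighteen) choices of $e_2$ up to $\operatorname{Aut}(G_1 \cup e_1)$, and cover them all with three labelled toroidal embeddings of $G_1 \cup e_1$. One small sharpening: the symmetries $1\leftrightarrow 2$ and $3\leftrightarrow 4$ of $G_1$ collapse your ``at most two'' isomorphism classes of $G_1 \cup e_1$ to a single one, so the appeal to Remark~\ref{rmk:5New4Cycles} is unnecessary and you may simply take $e_1 = A1{-}A'3$ without loss of generality.
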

    
\begin{proof}
    As $e_1$ is part of a $4$-cycle, we may assume without loss of generality that $e_1 = A1-A'3$. $G_1 \cup e_1$ has five types of edges, where the equivalence classes represent the orbits under automorphisms:
            \begin{multicols}{2}
                \begin{itemize}
                    \item $[Ax]=\{Ax, A'y\}$
                    \item $[x1]=\{x1, y3, A2, A'4\}$
                    \item $[B1]=\{B1, B2, C1, C2,  B'3, B'4, C'3, C'4\}$
                    \item $[BB']=\{BB', CC'\}$
                    \item $[xy]=\{xy, AA'\}$
                \end{itemize}
                \begin{figure}[H]
                    \centering
                    \includegraphics[height=3.5cm]{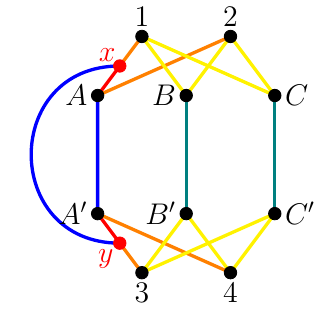}
                \end{figure}
            \end{multicols}

   By Lemma \ref{lem:LessG1Cases} we may assume that $e_2$ attaches to a bridge only if its other endpoint also attaches to a bridge.
            Now, there are at most 18 distinct ways to add one edge $e_2$ to $G_1\cup e_1$ such that $\girth(G_1 \cup e_1 \cup e_2) \geq 4$. We list these below. Note that we are not claiming all of the listed graphs to be pairwise non-isomorphic, but we did eliminate many obvious isomorphisms.
            We postpone cases where $e_2$ attaches to an edge in $[xy]$ until the last case.
            \begin{itemize}
            
                \item If one endpoint of $e_2$ attaches to an edge in $[Ax]$, without loss of generality we always assume it attaches to the given representative, and assuming the second endpoint does not attach to an edge in $[xy]$, we get the following four classes of edges in $E(G_1 \cup e_1)$ for the second endpoint of $e_2$ to attach to. Note that these equivalence classes are the orbits under the subgroup of automorphisms preserving the edge $Ax$. Recall that by Lemma \ref{lem:LessG1Cases}, we may assume that the second endpoint of $e_2$ does not attach to a bridge.
                    \begin{multicols}{2}
                        \begin{enumerate}[(1)]
                            \item $(B1)=\{B1, B2, C1, C2\}$
                            \item $(A'y)=\{A'y\}$
                            \item $(y3)=\{y3, A'4\}$
                            \item $(B'3)=\{B'3, B'4, C'3, C'4\}$
                        \end{enumerate}
                    \end{multicols}
                    
                \item If one endpoint of $e_2$ attaches to an edge in $[x1]$, and assuming the second endpoint does not attach to an edge in $[Ax]$ or $[xy]$, we get the following five classes of edges in $E(G_1 \cup e_1)$ for the second endpoint of $e_2$ to attach to. Note that we avoid attaching to $y3$ as this gives a graph isomorphic to (2).
                    \begin{multicols}{3}
                        \begin{enumerate}[(1)]
                            \setcounter{enumi}{4}
                            \item $(A2)=\{A2\}$
                            \item $(B2)=\{B2, C2\}$
                            \item $(A'4)=\{A'4\}$
                            \item $(B'3)=\{B'3, C'3\}$
                            \item $(B'4)=\{B'4, C'4\}$
                        \end{enumerate}
                    \end{multicols}
                    
                \item If one endpoint of $e_2$ attaches to an edge in $[B1]$, and assuming the second endpoint does not attach to an edge in $[Ax]$, $[x1]$ or $[xy]$, we get the following five classes of edges in $E(G_1 \cup e_1)$ for the second endpoint of $e_2$ to attach to. 
                    \begin{multicols}{3}
                        \begin{enumerate}[(1)]
                            \setcounter{enumi}{9}
                            \item $(C2)=\{C2\}$
                            \item $(B'3)=\{B'3\}$
                            \item $(B'4)=\{B'4\}$
                            \item $(C'3)=\{C'3\}$
                            \item $(C'4)=\{C'4\}$
                        \end{enumerate}
                    \end{multicols}
                    
                \item If one endpoint of $e_2$ attaches to an edge in $[BB']$, Lemma \ref{lem:LessG1Cases} implies that the second endpoint of $e_2$ attaches to an edge in
                        \begin{enumerate}[(1)]
                            \setcounter{enumi}{14}
                            \item {$(CC')=\{CC'\}$}.
                        \end{enumerate}

                \item Now we assume that one endpoint of $e_2$ attaches to an edge in $[xy]$. We get the following three classes of edges in $E(G_1 \cup e_1)$ for the second endpoint of $e_2$ to attach to. Note that by Lemma \ref{lem:LessG1Cases} we need not consider $e_2$ connecting $e_1$ to a bridge.
                    \begin{multicols}{2}
                        \begin{enumerate}[(1)]
                            \setcounter{enumi}{15}
                            \item {$(A2)=\{A2\}$}
                            \item {$(B1)=\{B1, C1, B'3, C'3\}$}
                            \item {$(B2)=\{B2, C2, B'4, C'4\}$}
                        \end{enumerate}
                    \end{multicols}
            \end{itemize} 

            For each of these graphs, at least one of the following three labelled embeddings of $G_1 \cup e_1$ extends to an embedding of the graph $G_1 \cup e_1 \cup e_2$.
            \begin{figure}[H]
                \centering
                \includegraphics[height=4cm]{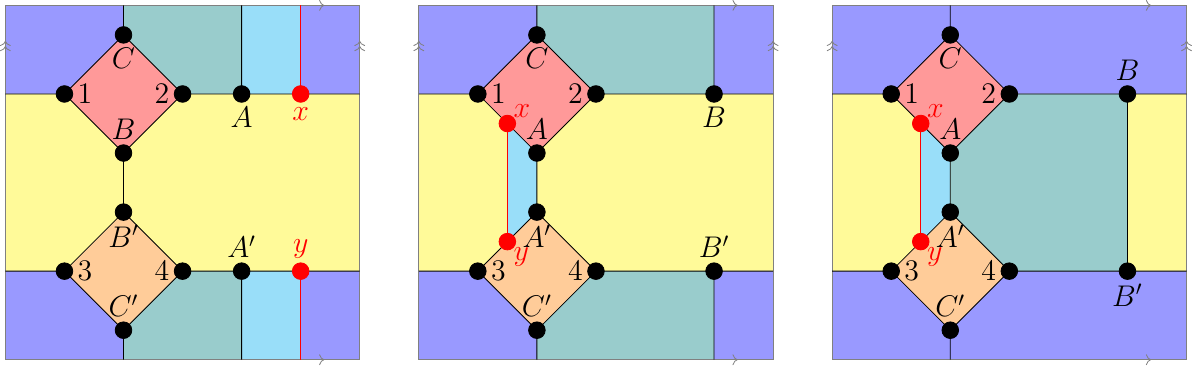}
            \end{figure}
            \vspace{-.5cm}
\end{proof}

\begin{lemma}
    \label{lem:4connectedG1CaseB}
    If $G_1\cup e_1$ is cyclically $4$-connected and $e_1$ is not an edge in a 4-cycle in $G_1 \cup e_1$, then $G_1 \cup e_1 \cup e_2$ is toroidal.
\end{lemma}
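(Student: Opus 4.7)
The plan is to mirror the template of Lemma \ref{lem:4connectedG1CaseA}. The hypotheses pin down $e_1$ up to isomorphism: the proof of Proposition \ref{prop:G1InTorus} shows that the only cyclically $4$-connected choices of $G_1 \cup e_1$ come from the orbits $(A'3)$ and $(B'3)$, and letting $u$ and $v$ denote the subdivision vertices introduced by $e_1$, one sees that $e_1 = A1 - A'3$ lies in the $4$-cycle $u - A - A' - v$ via the bridge $AA'$, while $e_1 = A1 - B'3$ lies in no $4$-cycle. Hence we may take $e_1 = A1 - B'3$ as the canonical representative, and the goal reduces to embedding every $G_1 \cup e_1 \cup e_2$ obtained from this particular $e_1$ into the torus.

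Next, I would compute $\operatorname{Aut}(G_1 \cup e_1)$ together with the induced orbit decomposition of $E(G_1 \cup e_1)$. Any such automorphism must preserve the unordered pair $\{A1, B'3\}$ of subdivided edges, which together with the (broken) symmetries of $G_1$ should leave only a small group, likely an order-two involution swapping the two $K_{2,3}$-blocks touched by $e_1$. This yields noticeably more orbits than in the proof of Lemma \ref{lem:4connectedG1CaseA}: in particular, the two bridges incident to $K_{2,3}$-blocks touched by $e_1$ will be separated from the third, and the two untouched $K_{2,3}$-blocks will be separated from the two touched ones.

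I would then enumerate the placements of $e_2$ up to these symmetries, using Remark \ref{rmk:5AddingEdge}(3) to exclude girth-three configurations and Lemma \ref{lem:LessG1Cases}(b) to require that $e_2$ touch a bridge only if both of its endpoints do. Remarks \ref{rmk:CubicYDelta} and \ref{rmk:5New4Cycles} can be used to identify additional isomorphisms and shorten the list further. For each surviving case, I would exhibit an explicit toroidal embedding of $G_1 \cup e_1$, obtained from one of the two toroidal embeddings of $G_1$ from \cite{Kramer24} by routing $e_1$ through a suitable face, and then identify a face of that embedding into which $e_2$ can be drawn without crossings.

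The main obstacle I anticipate is the bookkeeping: the reduced symmetry yields a longer case list than the $18$ cases handled in Lemma \ref{lem:4connectedG1CaseA}, and no single labelled toroidal embedding of $G_1 \cup e_1$ is likely to resolve every case. The combinatorial heart of the argument will be assembling a small family of labelled toroidal embeddings of $G_1 \cup e_1$, probably three or four as in the previous lemma, whose collective face structures cover every remaining placement of $e_2$. The tightest cases are expected to be those where both endpoints of $e_2$ lie in the two $K_{2,3}$-blocks untouched by $e_1$, since these force $e_2$ to be drawn across regions topologically far from $e_1$ and may require a particularly careful choice of how $e_1$ is routed into the chosen toroidal embedding of $G_1$.
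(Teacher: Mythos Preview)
Your overall strategy matches the paper's: fix $e_1 = A1 - B'3$, compute edge orbits of $G_1 \cup e_1$, enumerate placements of $e_2$ up to symmetry and the reductions from Lemma~\ref{lem:LessG1Cases}(b), and cover every surviving case with a small family of labelled toroidal embeddings. Two points of correction are worth noting.

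First, your description of the structure of $G_1$ is off. The graph $G_1$ has exactly two $K_{2,3}$-blocks, and since $e_1 = A1 - B'3$ has one endpoint in each, \emph{both} are touched; there are no ``untouched $K_{2,3}$-blocks''. Your anticipated orbit splitting of the bridges is correct in spirit---the paper finds $[AA'] = \{AA', BB'\}$ and $[CC'] = \{CC'\}$---but the reason is proximity to the subdivided edges, not which block is touched. This confusion also undermines your prediction about the ``tightest cases'': the genuinely delicate placements are those with $e_2$ joining $K_{2,3}$-edges far from both subdivision vertices or attaching to $e_1$ itself, not ones lying in nonexistent untouched blocks.

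Second, you omit a reduction the paper uses to keep the list manageable: since Lemma~\ref{lem:4connectedG1CaseA} already handles the case where some cyclically $4$-connected $G_1 \cup e_i$ has $e_i$ on a $4$-cycle, one may assume here that whenever $G_1 \cup e_2$ is cyclically $4$-connected, $e_2$ is not in a new $4$-cycle. Even with this reduction the paper finds ten edge orbits and up to $32$ cases, resolved by \emph{five} labelled embeddings rather than the three or four you anticipate---so your estimates of the bookkeeping are somewhat optimistic.
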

    
\begin{proof}
    First note that we may assume without loss of generality that $e_1 = A1-B'3$. To avoid listing graphs already covered in Lemma \ref{lem:4connectedG1CaseA}, we assume that if $G \cup e_2$ is cyclically $4$-connected, $e_2$ is not part of a 4-cycle. Moreover, by Lemma \ref{lem:LessG1Cases} we may assume that $e_2$ attaches to a bridge only if its other endpoint also attaches to a bridge.
    Let $x$ be the newly added vertex on the edge $A1$, and $y$ be the newly added vertex on the edge $B'3$.

    $G_1 \cup e_1$ has ten types of edges, where the equivalence classes represent the orbits under automorphisms:
            \begin{multicols}{3}
                \begin{itemize}
                    \item $[Ax]=\{Ax, B'y\}$
                    \item $[x1]=\{x1, y3\}$
                    \item $[A2]=\{A2, B'4\}$
                    \item $[B1]=\{B1, A'3\}$
                    \item $[B2]=\{B2, A'4\}$
                    \item $[C1]=\{C1, C'3\}$
                    \item $[C2]=\{C2, C'4\}$
                    \item $[AA']=\{AA', BB'\}$
                    \item $[CC']=\{CC'\}$
                    \item $[xy]=\{xy\}$
                \end{itemize}
                \begin{figure}[H]
                    \centering
                    \includegraphics[height=3.75cm]{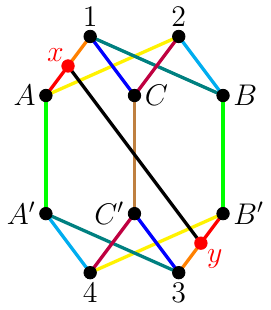}
                \end{figure}
            \end{multicols}

            There are at most $32$ distinct ways to add one edge $e_2$ to $G_1\cup e_1$ such that $\girth(G_1 \cup e_1 \cup e_2) \geq 4$, which we list below. We postpone attaching $e_2$ to $e_1=xy$ until the last case.
            \begin{itemize}
                \item If one endpoint of $e_2$ attaches to an edge in $[Ax]$, without loss of generality we always assume it attaches to the given representative, and assuming the second endpoint does not attach to the edge $xy$, we get the following eight classes of edges in $E(G_1 \cup e_1)$ for the second endpoint of $e_2$ to attach to. Recall that by Lemma \ref{lem:LessG1Cases} we may assume that the second endpoint of $e_2$ does not attach to a bridge. Since we assume $e_2$ not to be part of a new $4$-cycle in $G \cup e_2$ if $G \cup e_2$ is cyclically $4$-connected, we avoid attaching to an edge in $\{A'3, A'4\}$. Moreover, we avoid attaching to $C1$ because using Remark \ref{rmk:5New4Cycles} regarding $4$-cycles this gives a graph isomorphic to $G_1 \cup (A1-A'3) \cup (xy-B1)$, which we covered in Lemma \ref{lem:4connectedG1CaseA}.
                    \begin{multicols}{3}
                        \begin{enumerate}[(1)]
                            \item {$(B1)=\{B1\}$}
                            \item {$(B2)=\{B2\}$}
                            \item {$(C2)=\{C2\}$}
                            \item {$(B'y)=\{B'y\}$}
                            \item {$(y3)=\{y3\}$}
                            \item {$(B'4)=\{B'4\}$}
                            \item {$(C'3)=\{C'3\}$}
                            \item {$(C'4)=\{C'4\}$}
                        \end{enumerate}
                    \end{multicols}
                    
                \item If one endpoint of $e_2$ attaches to an edge in $[x1]$, and assuming the second endpoint of $e_2$ does not attach to an edge in $[Ax]$ or $[xy]$, we get the following six classes of edges in $E(G_1 \cup e_1)$ for the second endpoint of $e_2$ to attach to. We avoid attaching to $y3$ as this gives a graph isomorphic to (4).
                    \begin{multicols}{3}
                        \begin{enumerate}[(1)]
                            \setcounter{enumi}{8}
                            \item {$(A2)=\{A2\}$}
                            \item {$(B2)=\{B2\}$}
                            \item {$(C2)=\{C2\}$}
                            \item {$(B'4)=\{B'4\}$}
                            \item {$(C'3)=\{C'3\}$}
                            \item {$(C'4)=\{C'4\}$}
                        \end{enumerate}
                        \end{multicols}
                    
                \item If one endpoint of $e_2$ attaches to an edge in $[A2]$, and assuming the second endpoint of $e_2$ does not attach to an edge in $[Ax]$, $[x1]$ or $[xy]$, we get the following five classes of edges in $E(G_1 \cup e_1)$ for the second endpoint of $e_2$ to attach to.
                    \begin{multicols}{3}
                        \begin{enumerate}[(1)]
                            \setcounter{enumi}{14}
                            \item {$(B1)=\{B1\}$}
                            \item {$(C1)=\{C1\}$}
                            \item {$(B'4)=\{B'4\}$}
                            \item {$(C'3)=\{C'3\}$}
                            \item {$(C'4)=\{C'4\}$}
                        \end{enumerate}
                    \end{multicols}
                    
                \item If one endpoint of $e_2$ attaches to an edge in $[B1]$, and assuming the second endpoint of $e_2$ does not attach to an edge in $[Ax]$, $[x1]$, $[A2]$ or $[xy]$, we get the following five classes of edges in $E(G_1 \cup e_1)$ for the second endpoint of $e_2$ to attach to.
                    \begin{multicols}{3}
                        \begin{enumerate}[(1)]
                            \setcounter{enumi}{19}
                            \item {$(C2)=\{C2\}$}
                            \item {$(A'3)=\{A'3\}$}
                            \item {$(A'4)=\{A'4\}$}
                            \item {$(C'3)=\{C'3\}$}
                            \item {$(C'4)=\{C'4\}$}
                        \end{enumerate}
                    \end{multicols}
                    
                \item If one endpoint of $e_2$ attaches to an edge in $[B2]$, and assuming the second endpoint of $e_2$ does not attach to an edge in $[Ax]$, $[x1]$, $[A2]$, $[B1]$ or $[xy]$, we get the following four classes of edges in $E(G_1 \cup e_1)$ for the second endpoint of $e_2$ to attach to.
                    \begin{multicols}{3}
                        \begin{enumerate}[(1)]
                            \setcounter{enumi}{24}
                            \item {$(C1)=\{C1\}$}
                            \item {$(A'4)=\{A'4\}$}
                            \item {$(C'3)=\{C'3\}$}
                            \item {$(C'4)=\{C'4\}$}
                        \end{enumerate}
                    \end{multicols}
                    
                \item If one endpoint of $e_2$ attaches to an edge in $[C1]$ or $[C2]$, and assuming the second endpoint of $e_2$ does not attach to an edge in $[Ax]$, $[x1]$, $[A2]$, $[B1]$, $[B2]$ or $[xy]$, we get no classes of edges in $E(G_1 \cup e_1)$ for the second endpoint of $e_2$ to attach to.
                    
                \item If one endpoint of $e_2$ attaches to an edge in $[AA']$, recall from Lemma \ref{lem:LessG1Cases} that we assume its second endpoint attaches to a bridge as well. Thus, we get the following two classes of edges in $E(G_1 \cup e_1)$ for the second endpoint of $e_2$ to attach to.
                    \begin{multicols}{3}
                        \begin{enumerate}[(1)]
                            \setcounter{enumi}{28}
                            \item {$(BB')=\{BB'\}$}
                            \item {$(CC')=\{CC'\}$}
                        \end{enumerate}
                    \end{multicols}
                    
                \item If one endpoint of $e_2$ attaches to $CC'$, we again may assume its second endpoint attaches to a bridge as well. Assuming it does not attach to an edge in $[AA']$, we get no classes of edges in $E(G_1 \cup e_1)$ for the second endpoint of $e_2$ to attach to.
                    
                \item If one endpoint of $e_2$ attaches to $xy$, recall that by Lemma \ref{lem:LessG1Cases}, we may assume its second endpoint does not attach to a bridge. Additionally, we avoid attaching to an edge in $[B1]$ or $[B2]$ as this would give a graph isomorphic to one of the graphs covered in Lemma \ref{lem:4connectedG1CaseA}. Moreover, we avoid attaching to $C1$ as this gives a graph isomorphic to graph (1), where we have used Remark \ref{rmk:5New4Cycles} to obtain the isomorphism. Thus, we get the following two classes of edges in $E(G_1 \cup e_1)$ for the second endpoint of $e_2$ to attach to. 
                    \begin{multicols}{3}
                        \begin{enumerate}[(1)]
                            \setcounter{enumi}{30}
                            \item {$(A2)=\{A2, B'4\}$}
                            \item {$(C2)=\{C2, C'4\}$}
                        \end{enumerate}
                    \end{multicols}
                    
            \end{itemize}
            
        For each of these graphs, at least one of the following five labelled embeddings of $G_1 \cup e_1$ extends to an embedding of $G_1 \cup e_1 \cup e_2$.
            \begin{figure}[H]
                \centering
                \includegraphics[width=\textwidth]{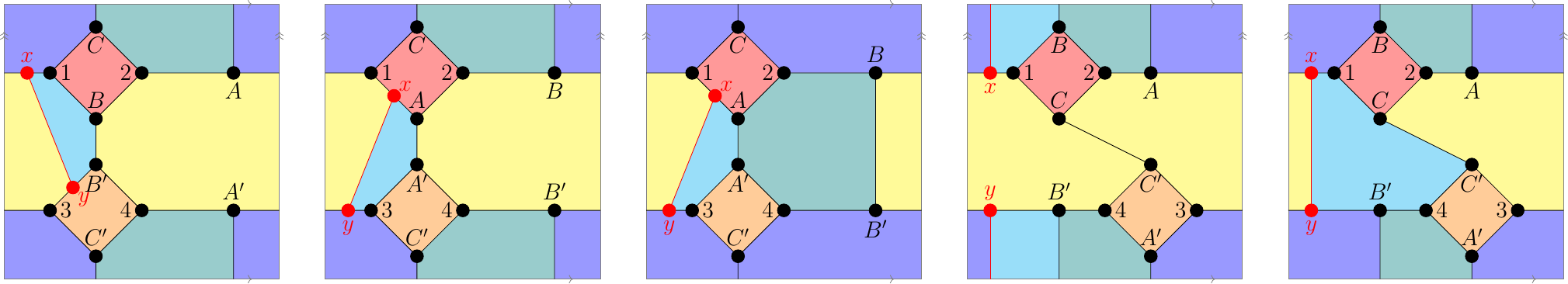}
            \end{figure}
            \vspace{-.75cm}
\end{proof}

\begin{lemma}
    \label{lem:3connectedG1}
    If $G_1 \cup e_1$ is not cyclically $4$-connected and $G_1 \cup e_1 \cup e_2 \notin \{H_4, H_5, \ldots, H_9\}$, then $G_1 \cup e_1 \cup e_2$ is toroidal.
\end{lemma}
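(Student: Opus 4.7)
The plan is to mirror the strategy of Lemmas \ref{lem:4connectedG1CaseA} and \ref{lem:4connectedG1CaseB}: enumerate the isomorphism classes of placements for $e_1$, then for each such class enumerate the placements for $e_2$ up to symmetry, and finally either exhibit a toroidal embedding of $G_1 \cup e_1$ that extends to $G_1 \cup e_1 \cup e_2$ or identify the resulting graph as one of $H_4, H_5, \ldots, H_9$.

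First I would reduce the cases for $e_1$. By Lemma \ref{lem:LessG1Cases}(c), since $G_1 \cup e_1$ is not cyclically $4$-connected, we may assume neither $e_1$ nor $e_2$ connects a bridge to a $K_{2,3}$-edge. Recall from the list in Proposition \ref{prop:G1InTorus} that the placements of $e_1$ in $G_1$ up to automorphism are $AA'-BB'$, $A1-A2$, $A1-B1$, $A1-B2$, $A1-A'3$, and $A1-B'3$, plus the forbidden ones connecting a bridge to a $K_{2,3}$-edge. The last two of these produce cyclically $4$-connected $G_1 \cup e_1$ and are handled in Lemmas \ref{lem:4connectedG1CaseA} and \ref{lem:4connectedG1CaseB}, so the relevant representatives for $e_1$ here are (i) $AA'-BB'$, (ii) $A1-A2$, (iii) $A1-B1$, and (iv) $A1-B2$. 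Case (ii) satisfies $\girth(G_1 \cup e_1)=3$ but must still be considered since $e_2$ can increase the girth (see the comment following Remark \ref{rmk:CubicYDelta}).

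For each case I would determine the orbit decomposition of $E(G_1 \cup e_1)$ under the stabiliser of $e_1$ in $\mathrm{Aut}(G_1)$, list the representatives for $e_2$, and trim the list by: excluding attachments of $e_2$ that make $G_1 \cup e_1 \cup e_2$ cyclically $4$-connected (already handled), excluding graphs with $\girth(G_1 \cup e_1 \cup e_2) \leq 3$ by Proposition \ref{prop:ObstructionGirth}, applying Lemma \ref{lem:LessG1Cases}(c) to restrict how $e_2$ can meet $e_1$ and any bridge, and collapsing $4$-cycle-creating attachments via Remark \ref{rmk:5New4Cycles}. For each surviving candidate, I would then take one of the labelled toroidal embeddings of $G_1$ from the proof of Proposition \ref{prop:G1InTorus} (augmented by an arc for $e_1$ through a face shared by its endpoints) and try to route $e_2$ through a shared face of that embedding; when this succeeds, the candidate is toroidal.

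The main obstacle is to verify that the candidates for which no such extension exists are exactly the six graphs $H_4, \ldots, H_9$, up to isomorphism. To do this I would, for each non-extending candidate, produce an explicit labelled isomorphism to one of the $H_i$ in Figure \ref{fig:H4ThroughH9}, using Remarks \ref{rmk:CubicYDelta} and \ref{rmk:5New4Cycles} to recognise the identifications in the cases where $e_2$ creates a new $3$- or $4$-cycle meeting $e_1$. Since $H_4$ already arises from $F_{11}$ in Proposition \ref{prop:F11Torus}, I would specifically check which placement of $(e_1,e_2)$ on $G_1$ reproduces it, and similarly track how $H_5,\ldots,H_9$ appear; non-toroidality is then inherited from the prior identification of these graphs as obstruction candidates, while the fact that all other candidates embed is precisely what the labelled-embedding verification provides.
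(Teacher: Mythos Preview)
Your proposal is correct and follows essentially the same approach as the paper: the paper likewise reduces to the four representatives $e_1 \in \{A1\text{-}A2,\, A1\text{-}B1,\, A1\text{-}B2,\, AA'\text{-}BB'\}$, uses Lemma~\ref{lem:LessG1Cases} to assume $G_1 \cup e_1 \cup e_2$ is not cyclically $4$-connected, enumerates the $e_2$ placements under the stabiliser of $e_1$, and then either extends an explicit labelled toroidal embedding of $G_1 \cup e_1$ or identifies the candidate with one of $H_4,\ldots,H_9$. The only cosmetic difference is that the paper records, for each case, a dedicated labelled embedding of $G_1 \cup e_1$ rather than starting from the two $G_1$-embeddings of Proposition~\ref{prop:G1InTorus}, but this amounts to the same verification.
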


\begin{proof}
    As $G_1 \cup e_1$ is not cyclically $4$-connected, it follows from Lemma \ref{lem:LessG1Cases} that we may assume $G_1 \cup e_1 \cup e_2$ to not be cyclically $4$-connected either. Our previous work gives that $e_1 \in \{A1-A2, A1-B1, A1-B2, AA'-BB'\}$. As $G_1 \cup e_1 \cup e_2$ is not cyclically $4$-connected, $e_2$ does not connect the two different $K_{2,3}$ subgraphs in $G_1$. By Lemma \ref{lem:LessG1Cases}, we may assume that $e_2$ does not connect a bridge to a $K_{2,3}-$edge. Moreover, we may assume that $e_2$ connects $e_1$ to a bridge if and only if $e_1=AA'-BB'$. As before, we are not claiming that the listed graphs are pairwise non-isomorphic, but we did eliminate many obvious isomorphisms.

    \textit{Case A:} $e_1 = A1-A2$\\
        Let $x$ be the vertex subdividing the edge $A1$, and $y$ be the vertex subdividing the edge $A2$.
        Note that $\girth(G_1 \cup e_1) = 3$, where $A-x-y-A$ forms a 3-cycle. Since we are only interested in graphs with girth at least four, the edge $e_2$ needs to break up the 3-cycle.
        Under this additional assumption, $G_1 \cup e_1$ has two types of edges for $e_2$ to attach to, where the equivalence classes represent orbits under automorphisms:
        \begin{multicols}{2}
            \begin{itemize}
                \item $[Ax]=\{Ax, Ay\}$
                \item $[xy] = \{xy\}$
            \end{itemize}
            \newcolumn
            \begin{figure}[H]
                \centering
                \includegraphics[height=3cm]{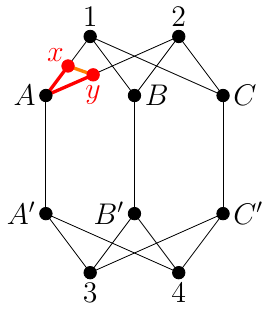}
            \end{figure}
        \end{multicols}

        Using Lemma \ref{lem:LessG1Cases} and the fact that $G_1 \cup e_1 \cup e_2$ is not cyclically $4$-connected, we quickly see that up to isomorphism there are at most three possible placements of the edge $e_2$ such that $\girth(G_1 \cup e_1 \cup e_2) \geq 4$.
        \begin{multicols}{3}
            \begin{enumerate}[(1)]
                \item $Ax-y2$
                \item $Ax-B1$
                \item $Ax-B2$
            \end{enumerate}
        \end{multicols}
        Note that if one endpoint of $e_2$ were to attach to $xy$, the second endpoint would need to attach to an edge in $(B1)=\{B1, B2, C1, C2\}$. Using Remark \ref{rmk:5New4Cycles} we obtain that this graph is isomorphic to (2).
        
        For all these graphs, the labelled embedding of $G_1 \cup e_1$ pictured below extends to an embedding of $G_1 \cup e_1 \cup e_2$.
            \begin{figure}[H]
                \centering
                \includegraphics[height=3.5cm]{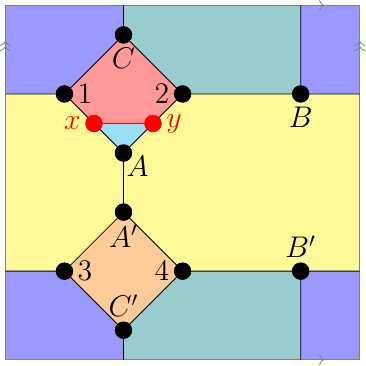} 
            \end{figure}
        
    \textit{Case B:} $e_1 = A1-B1$\\
        Let $x$ be the vertex subdividing the edge $A1$, and $y$ be the vertex subdividing the edge $B1$.
        Similarly to Case A, the edge $e_2$ needs to break up the 3-cycle $1-x-y-1$.
        Under this additional assumption, $G_1 \cup e_1$ has two types of edges for $e_2$ to attach to, where the equivalence classes represent orbits under automorphisms:
        \begin{multicols}{2}
            \begin{itemize}
                \item $[x1]=\{x1, y1\}$
                \item $[xy] = \{xy\}$
                \newcolumn
            \end{itemize}
            \begin{figure}[H]
                \centering
                \includegraphics[height=3cm]{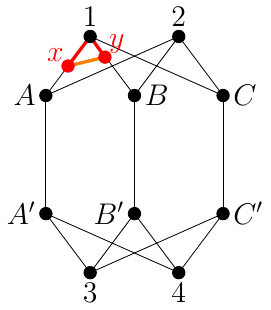}
            \end{figure}
        \end{multicols}

        To avoid listing graphs already covered in Case A, we will assume that $G_1 \cup e_1 \cup e_2 \not\cong G_1 \cup e'_1 \cup e'_2$, where $G_1 \cup e'_1$ contains a three cycle involving one of the vertices $A, B,$ or $C$. We now list the five graphs we get from attaching the second edge $e_2$ such that $\girth(G_1 \cup e_1 \cup e_2) \geq 4$. 
        \begin{itemize}
            \item If one endpoint of $e_2$ attaches to an edge in $[x1]$, without loss of generality assume it attaches to $x1$, we get the following three classes of edges in $E(G_1 \cup e_1)$ for the second endpoint of $e_2$ to attach to. 
            We do not attach to $A2$, as this would result in a graph isomorphic to (2) from Case A. Moreover, by Lemma \ref{lem:LessG1Cases}, we avoid attaching to a bridge.
                \begin{multicols}{3}
                    \begin{enumerate}[(1)]
                        \setcounter{enumi}{3}
                        \item $(By)=\{By\}$
                        \item $(B2)=\{B2\}$
                        \item $(C2)=\{C2\}$
                    \end{enumerate}
                \end{multicols}
                
            \item If one endpoint of $e_2$ attaches to $xy$, and assuming the second endpoint does not attach to an edge in $[A2]=\{A2, B2\}$ as this would result in a graph isomorphic to one from Case A, we get the following two classes of edges in $E(G_1 \cup e_1)$ for the second endpoint of $e_2$ to attach to.
                \begin{multicols}{3}
                    \begin{enumerate}[(1)]
                        \setcounter{enumi}{6}
                            \item $(C1)=\{C1\}$
                            \item $(C2)=\{C2\}$
                        \end{enumerate}
                    \end{multicols}
        \end{itemize}

        For graph (6), the labelled embedding of $G_1 \cup e_1$ pictured below extends to an embedding of $G_1 \cup e_1 \cup e_2$. Recall the graphs $H_7$ and $H_9$ from Figure \ref{fig:H4ThroughH9}. We observe that (4) $\cong H_9 \cong$ (7) and (5) $\cong H_7 \cong$ (8).
            \begin{figure}[H]
                \centering
                \includegraphics[height=3.5cm]{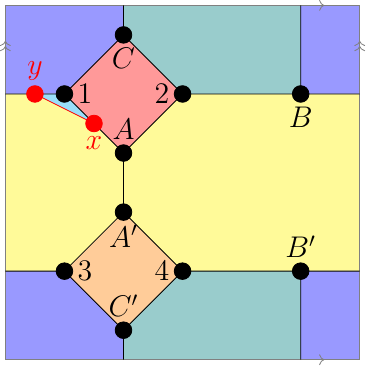}
                \end{figure}
                
    \textit{Case C:} $e_1 = A1-B2$\\
        Let $x$ be the vertex subdividing the edge $A1$, and $y$ be the vertex subdividing the edge $B2$.
        The graph $G_1 \cup e_1$ has nine types of edges, where the equivalence classes represent orbits under automorphisms:
        \begin{multicols}{3}
            \begin{itemize}
                \item $[Ax]=\{Ax, By\}$
                \item $[x1]=\{x1, y2\}$
                \item $[A2]=\{A2, B1\}$
                \item $[C1]=\{C1, C2\}$
                \item $[A'3]=\{A'3, A'4, B'3, B'4\}$
                \item $[C'3]=\{C'3, C'4\}$
                \item $[AA']=\{AA', BB'\}$
                \item $[CC']=\{CC'\}$
                \item $[xy] = \{xy\}$
            \end{itemize}
            \newcolumn
            \begin{figure}[H]
                \centering
                \includegraphics[height=3cm]{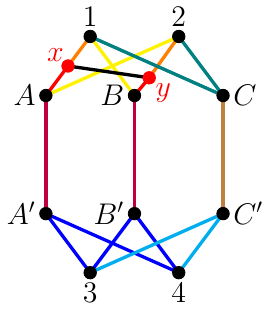}
            \end{figure}
        \end{multicols}

        To avoid listing graphs already covered in Case A - B, we will assume that $\girth(G \cup e_2) \geq 4$.
        We now list the ten graphs we get from attaching the second edge $e_2$ such that $\girth(G_1 \cup e_1 \cup e_2) \geq 4$. As before, we only allow attaching $e_2$ to $e_1 = xy$ in the last case.
        \begin{itemize}
        
            \item If one endpoint of $e_2$ attaches to an edge in $[Ax]$, without loss of generality we always assume it attaches to the given representative, and assuming the second endpoint does not attach to the edge $xy$, we get the following three classes of edges in $E(G_1 \cup e_1)$ for the second endpoint of $e_2$ to attach to. 
                \begin{multicols}{3}
                    \begin{enumerate}[(1)]
                        \setcounter{enumi}{8}
                        \item $(By)=\{By\}$
                        \item $(y2)=\{y2\}$
                        \item $(C2)=\{C2\}$
                    \end{enumerate}
                \end{multicols}
                
            \item If one endpoint of $e_2$ attaches to an edge in $[x1]$, and assuming the second endpoint of $e_2$ does not attach to an edge in $[Ax]$ or $[xy]$, the second endpoint of $e_2$ attaches to an edge in
                    \begin{enumerate}[(1)]
                        \setcounter{enumi}{11}
                        \item $(C2)=\{C2\}$.
                    \end{enumerate}
                
            \item If one endpoint of $e_2$ attaches to an edge in $[A2]$, and assuming the second endpoint of $e_2$ does not attach to an edge in $[Ax], [x1]$ or $[xy]$, we get the following two classes of edges in $E(G_1 \cup e_1)$ for the second endpoint of $e_2$ to attach to.
                \begin{multicols}{3}
                    \begin{enumerate}[(1)]
                        \setcounter{enumi}{12}
                        \item $(B1)=\{B1\}$
                        \item $(C1)=\{C1\}$
                    \end{enumerate}
                \end{multicols}
                
            \item If one endpoint of $e_2$ attaches to an edge in $[C1]$, and assuming the second endpoint of $e_2$ does not attach to an edge in $[Ax], [x1], [A2]$ or $[xy]$, we get the no classes of edges in $E(G_1 \cup e_1)$ for the second endpoint of $e_2$ to attach to.
                        
            \item If one endpoint of $e_2$ attaches to an edge in $[A'3]$, and assuming the other endpoint does not attach to an edge in $[xy]$, we get the following two classes of edges in $E(G_1 \cup e_1)$ for the second endpoint of $e_2$ to attach to.
                \begin{multicols}{3}
                    \begin{enumerate}[(1)]
                        \setcounter{enumi}{14}
                        \item $(B'4)=\{B'4\}$
                        \item $(C'4)=\{C'4\}$
                    \end{enumerate}
                \end{multicols}

            \item If one endpoint of $e_2$ attaches to an edge in $[C'3]$, and assuming the second endpoint of $e_2$ does not attach to an edge in $[A'3]$ or $[xy]$. we get the no classes of edges in $E(G_1 \cup e_1)$ for the second endpoint of $e_2$ to attach to.
                
            \item If one endpoint of $e_2$ attaches to an edge in $[AA']$, then by Lemma \ref{lem:LessG1Cases}, the second endpoint attaches to a bridge as well. We get the following two classes of edges in $E(G_1 \cup e_1)$ for the second endpoint of $e_2$ to attach to.
                \begin{multicols}{3}
                    \begin{enumerate}[(1)]
                        \setcounter{enumi}{16}
                        \item $(BB')=\{BB'\}$
                        \item $(CC')=\{CC'\}$
                    \end{enumerate}
                \end{multicols}
                
            \item If one endpoint of $e_2$ attaches to $CC'$, and assuming the second endpoint of $e_2$ does not attach to an edge in $[AA']$ or $[xy]$, we get the no classes of edges in $E(G_1 \cup e_1)$ for the second endpoint of $e_2$ to attach to.
                
            \item If one endpoint of $e_2$ attaches to $xy$, we avoid attaching to an edge in $[A2]$ as $G_1 \cup (A1-B2) \cup (xy-A2) \cong G_1 \cup (A1-A2) \cup (xy-B1)$, which is a graph discussed in Case A. Similarly, we avoid attaching to an edge in $[C1]$ as these graphs were discussed in Case B. We get no classes of edges in $E(G_1 \cup e_1)$ for the second endpoint of $e_2$ to attach to.
        
        \end{itemize}

        For graphs (10) - (12), (14) - (16), and (18) at least one of the following two labelled embeddings of $G_1 \cup e_1$ extends to an embedding of $G_1 \cup e_1 \cup e_2$. Recall the graphs $H_5, H_6,$ and $H_8$ from Figure \ref{fig:H4ThroughH9}. We observe that (9) $\cong H_6$, (13) $\cong H_8$, and (17) $\cong H_5$.
        \begin{figure}[H]
            \centering
            \includegraphics[height=3.5cm]{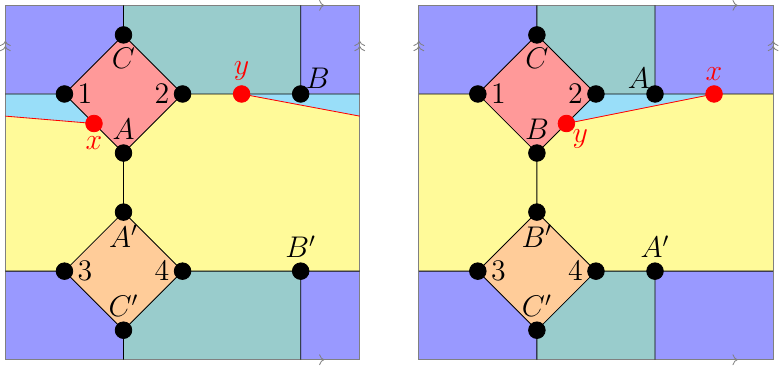}
        \end{figure}

    \textit{Case D:} $e_1 = AA'-BB'$\\
        Let $x$ be the vertex subdividing the edge $AA'$, and $y$ be the vertex subdividing the edge $BB'$.\\
        To avoid listing graphs already covered in Cases A - C, we will assume that $e_2$ does not attach to a $K_{2,3}-$edge.
        Under this additional assumption, $G_1 \cup e_1$ has three types of edges for $e_2$ to attach to, where the equivalence classes represent orbits under automorphisms:
        \begin{multicols}{2}
            \begin{itemize}
                \item $[Ax]=\{Ax, xA', By, yB'\}$
                \item $[CC']=\{CC'\}$
                \item $[xy]=\{xy\}$
            \end{itemize}
            \newcolumn
            \begin{figure}[H]
                \centering
                \includegraphics[height=3cm]{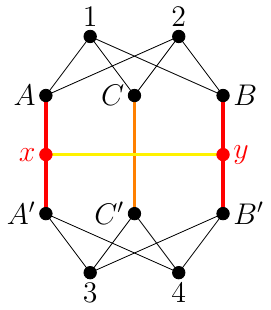}
            \end{figure}
        \end{multicols}

        We quickly see that we obtain at most four pairwise non-isomorphic graphs by attaching the second edge $e_2$ such that $\girth(G_1 \cup e_1 \cup e_2) \geq 4$.
        \begin{multicols}{4}
            \begin{enumerate}[(1)]
            \setcounter{enumi}{18}
                \item $Ax-By$
                \item $Ax-yB'$
                \item $Ax-CC'$
                \item $CC'-xy$
            \end{enumerate}
        \end{multicols}
       
    For graphs (19) and (20), the labelled embedding of $G_1 \cup e_1$ pictured below extends to an embedding of $G_1 \cup e_1 \cup e_2$. Recall the graphs $H_4$ and $H_9$ from Figure \ref{fig:H4ThroughH9}.
        We observe that (21) $\cong H_4$ and (22) $\cong H_9$.
            \begin{figure}[H]
                \centering
                \includegraphics[height=3.5cm]{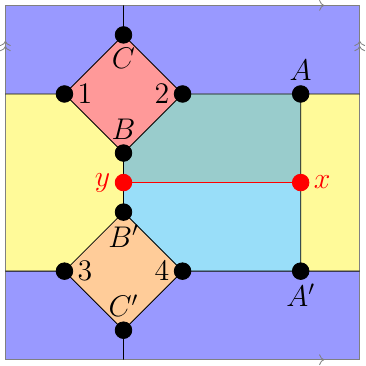}
            \end{figure}
        \vspace{-.5cm}
\end{proof}

\section{Properties of \texorpdfstring{$H_0, H_1, \ldots, H_9$}{H\_0, H\_1, ..., H\_9}}\label{sec:defHi}
In this section, we define the graphs $H_0, H_1, \ldots, H_9$, and $H_0 \cup e$. We show that they are pairwise non-isomorphic and that any proper subgraph of a graph in $\{H_i\}_{i=0}^9$ does embed into the torus. Moreover, we prove none of $H_0$ through $H_9$ embeds into the torus.
This shows that $H_0, H_1, \ldots, H_9$ are cubic torus obstructions with Betti number at most eight.
Together with Proposition \ref{prop:ProjectivePlanarToroidal} and Sections \ref{sec:t2obstruction_candidates} - \ref{sec:extendingG1} this gives that $\{H_i\}_{i=0}^9$ is the complete set of cubic torus obstructions of Betti number at most eight. Additionally, $H_0 \cup e$ is the unique graph of Betti number at most eight not embeddable into the torus without being an obstruction. Thus, we will have proven the main theorem.

Recall from Sections \ref{sec:t2obstruction_candidates} - \ref{sec:extendingG1} that there are at most eleven pairwise non-isomorphic cubic graphs $H_0 \cup e, H_0, H_1, \ldots, H_9$ with Betti number at most eight that do not embed into the torus. We have pictured the graphs $H_0, H_1, \ldots, H_9$ below.

\begin{figure}[H]
    \centering
    \includegraphics[height=1.9cm]{images/E42.pdf}
    \includegraphics[height=1.9cm]{images/introduction/H1.pdf}
    \includegraphics[height=1.9cm]{images/introduction/H2.pdf}
    \includegraphics[height=1.9cm]{images/introduction/H3.pdf}
    \includegraphics[height=1.9cm]{images/introduction/H4.pdf}
    \includegraphics[height=1.9cm]{images/introduction/H5.pdf}
    \includegraphics[height=1.9cm]{images/introduction/H6.pdf}
    \includegraphics[height=1.9cm]{images/introduction/H7.pdf}
    \includegraphics[height=1.9cm]{images/introduction/H8.pdf}
    \includegraphics[height=1.9cm]{images/introduction/H9.pdf}
\end{figure}

Moreover, recall that $H_0 \cong E_{42}$ as described in Section \ref{sec:t2obstruction_candidates}, $H_1, H_2, H_3, H_4$ are of the form $F_{11} \cup e$ as described in Section \ref{sec:extendingF11-F14}, and $H_4, H_5, \ldots, H_9$ are of the form $G_1 \cup e_1 \cup e_2$ as described in Section \ref{sec:extendingG1}. Note in particular that $H_4$ contains subgraphs homeomorphic to $F_{11}$ and $G_1$.

\begin{prop}
    $H_0 \cup e, H_0, H_1, \ldots, H_9$ are pairwise non-isomorphic.
\end{prop}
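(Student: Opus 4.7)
My plan is to distinguish the eleven graphs via a short hierarchy of combinatorial invariants. The first invariant is the number of connected components: since $H_0 \cong E_{42} \cong \K \sqcup \K$ is the unique disconnected candidate ($b_0 = 2$), the cubic identity $b_1 = \tfrac{1}{2}|V| + b_0$ together with $b_1(H_0) = 8$ forces $|V(H_0)| = 12$, whereas each of $H_0 \cup e, H_1, \ldots, H_9$ is connected with $b_1 = 8$ and hence has $|V| = 14$. This immediately separates $H_0$ from the other ten.

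Second, I would separate $H_0 \cup e$ from $H_1, \ldots, H_9$ by the existence of a cut edge. The edge $uv$ introduced in the construction of $H_0 \cup e$ is a bridge, because removing it returns two disjoint copies of $\K$, each with a single subdivided edge. On the other hand, each of $H_1, \ldots, H_9$ arises by applying the third (simple-graph-preserving) edge-attachment operation one or two times to one of the $2$-edge-connected graphs $F_{11}$ or $G_1$, and subdividing non-bridge edges and then joining the two new degree-two vertices is easily seen to preserve $2$-edge-connectivity. Hence none of $H_1, \ldots, H_9$ contains a bridge.

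The remaining and most involved step is to verify that the nine graphs $H_1, \ldots, H_9$ are pairwise non-isomorphic. For this I would assemble a small table of numerical invariants read off directly from the diagrams of the $H_i$: the counts of $4$-, $5$-, and $6$-cycles, the number of pairs of vertex-disjoint $4$-cycles, the edge-connectivity, and the order of the automorphism group. The expectation is that short-cycle counts alone settle most pairs, since the family $\{H_1, H_2, H_3\}$ inherits the combinatorial structure of $F_{11}$ while $\{H_5, \ldots, H_9\}$ inherits that of $G_1$ (with its pair of $K_{2,3}$-blocks, forcing many disjoint $4$-cycles), and $H_4$ is the unique member of both families. The main obstacle is pure bookkeeping: checking that for each of the $\binom{9}{2} = 36$ pairs at least one entry of the table differs. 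This is streamlined by grouping the $H_i$ by ancestry and then using the finer invariants (automorphism order, disjoint $4$-cycle pattern) only for the residual within-family comparisons. Once the table is filled in, pairwise non-isomorphism follows at once.
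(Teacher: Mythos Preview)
Your approach is essentially identical to the paper's: isolate $H_0$ by disconnectedness, isolate $H_0 \cup e$ by the presence of a cut edge (the paper phrases this as connectivity one), and distinguish $H_1, \ldots, H_9$ by short-cycle counts. The paper is slightly more economical, showing that $4$-cycle counts alone separate all but the pairs $(H_4, H_9)$ and $(H_6, H_7)$, which are then resolved by $5$-cycle counts---so you will not need the further invariants you list.
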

\begin{proof}
    We observe that among the ten graphs, $H_0$ is the only disconnected one and $H_0 \cup e$ is the only graph of connectivity one. Additionally, the number of $4$-cycles in $H_1, H_2, \ldots, H_9$ is seven, eight, ten, six, five, four, four, three, and six, respectively. Hence, it remains to distinguish $H_4$ from $H_9$ and $H_6$ from $H_7$. For these four graphs, we count their $5$-cycles and find exactly four and zero, respectively, for the first pair of graphs and five and four, respectively, for the second pair.    
\end{proof}

\begin{prop}
    Any proper subgraph of $H_i$ for $i \in \{0, 1, \ldots, 9\}$ embeds into the torus.
\end{prop}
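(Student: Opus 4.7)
The plan is to show that, for every edge $e$ of $H_i$, the graph $H_i \setminus e$ embeds into the torus. This suffices because every proper subgraph of $H_i$ is contained in $H_i \setminus e$ for some edge $e$ (either an edge that has been removed, or an edge incident to a removed vertex), and embeddability in a surface is inherited by subgraphs. Moreover, it is enough to verify one representative edge per orbit of the action of $\mathrm{Aut}(H_i)$ on $E(H_i)$, which substantially cuts down the number of cases.

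For $H_0 \cong E_{42}$, the graph is a cubic projective plane obstruction \cite{GloverHuneke75}, so $H_0 \setminus e$ is projective planar. After suppressing the two degree-two vertices created by the deletion, one obtains a cubic projective planar graph of Betti number seven, which is then toroidal by Proposition \ref{prop:ProjectivePlanarToroidal}. A single representative edge suffices since $K_{3,3}$ is edge-transitive, so this case is complete in one line.

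For $H_1, \ldots, H_9$, each graph has the form $F_{11} \cup e^\ast$ (for $i \in \{1,2,3,4\}$) or $G_1 \cup e_1^\ast \cup e_2^\ast$ (for $i \in \{4,5,\ldots,9\}$). Deleting an \emph{added} edge ($e^\ast$, $e_1^\ast$, or $e_2^\ast$) yields a graph homeomorphic to $F_{11}$, $G_1 \cup e_2^\ast$, or $G_1 \cup e_1^\ast$, each of which is known to be toroidal by \cite{Kramer24} or Proposition \ref{prop:G1InTorus}. Deleting an \emph{original} edge $f$ of the underlying $F_{11}$ or $G_1$ yields a graph of the form $(F_{11} \setminus f) \cup e^\ast$ or $(G_1 \setminus f) \cup e_1^\ast \cup e_2^\ast$, which one embeds directly by starting with a labelled toroidal embedding of $F_{11} \setminus f$ or $G_1 \setminus f$ (available because $F_{11}$ and $G_1$ are cubic projective plane obstructions, so on removing an edge they become projective planar and hence, after suppression, toroidal by Proposition \ref{prop:ProjectivePlanarToroidal}) and routing the added edge(s) through appropriate faces of that embedding.

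The main obstacle is the volume of cases when deleting an original edge: there are several orbits per $H_i$ and each requires an explicit embedding argument. This is mitigated by symmetry reductions via $\mathrm{Aut}(H_i)$, by reusing the labelled toroidal embeddings constructed in Sections \ref{sec:extendingF11-F14} and \ref{sec:extendingG1}, and by Remark \ref{rmk:5New4Cycles} which frequently identifies isomorphic cases. For each orbit representative, a toroidal embedding is then exhibited, completing the proof.
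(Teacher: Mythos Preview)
Your approach is correct in outline, but it takes a substantially more laborious route than the paper. The paper observes that for \emph{any} edge $e \in E(H_i)$, the cubic graph $G \simeq H_i \setminus e$ has $b_1(G) = 7$ (since each component of $H_i$ is $2$-connected, the number of components does not change). It then argues uniformly: if $G$ is projective planar, Proposition~\ref{prop:ProjectivePlanarToroidal} gives toroidality; if $G$ is not projective planar, then $G$ contains a cubic $\RP^2$-obstruction as a topological minor, and since $b_1(E_{42})=8$ and $b_1(F_i)=7$, the graph $G$ is either homeomorphic to some $F_i$ (toroidal by \cite{Kramer24}) or of the form $G_1 \cup e'$ (toroidal by Proposition~\ref{prop:G1InTorus}). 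No distinction between ``added'' and ``original'' edges, and no per-orbit embedding construction, is needed.

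What you gain from your method is nothing structural; what you lose is brevity. Your ``original edge'' case would require, for each orbit in each $H_i$, finding an explicit toroidal embedding of $(F_{11}\setminus f)\cup e^\ast$ or $(G_1\setminus f)\cup e_1^\ast\cup e_2^\ast$ in which the added edges can be routed---dozens of pictures. The paper sidesteps all of this by noting that the Betti-number drop forces $H_i\setminus e$ into territory already classified in Sections~\ref{sec:extendingF11-F14}--\ref{sec:extendingG1}. The key idea you missed is that the entire edge-deletion analysis collapses to the single dichotomy ``projective planar or not'' once you track $b_1$.
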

\begin{proof}
    Let $i \in \{0,1,\ldots,9\}$. A quick calculation shows that $b_1(H_i)=8$. Moreover, recall that each component of $H_i$ is $2$-connected. Let $e \in H_i$, and let $G$ be a cubic graph such that $G \simeq H_i \setminus e$. Then $G$ has the same number of components as $H_i$ and so $b_1(G)=7$. If $G$ is projective planar, Proposition \ref{prop:ProjectivePlanarToroidal} implies that $G$ is toroidal. If $G$ is not projective planar, \cite{Kramer24} combined with Proposition \ref{prop:G1InTorus} gives that $G$ is toroidal.
\end{proof}

We have already seen in Section \ref{sec:t2obstruction_candidates} that neither $H_0$ nor $H_0 \cup e$ embed into the torus. In particular, $H_0 \cup e$ cannot be an obstruction for the torus as its subgraph $H_0$ does not embed into the torus either.

Next, we show that $H_1, H_2, H_3$, and $H_4$ do not embed into the torus.
Recall that each of these graphs contains a subgraph homeomorphic to $F_{11}$.
Moreover, by \cite{Kramer24} there are exactly two inequivalent embeddings of $F_{11}$ into the torus.
\begin{figure}[H]
    \centering
    \includegraphics[height=3.5cm]{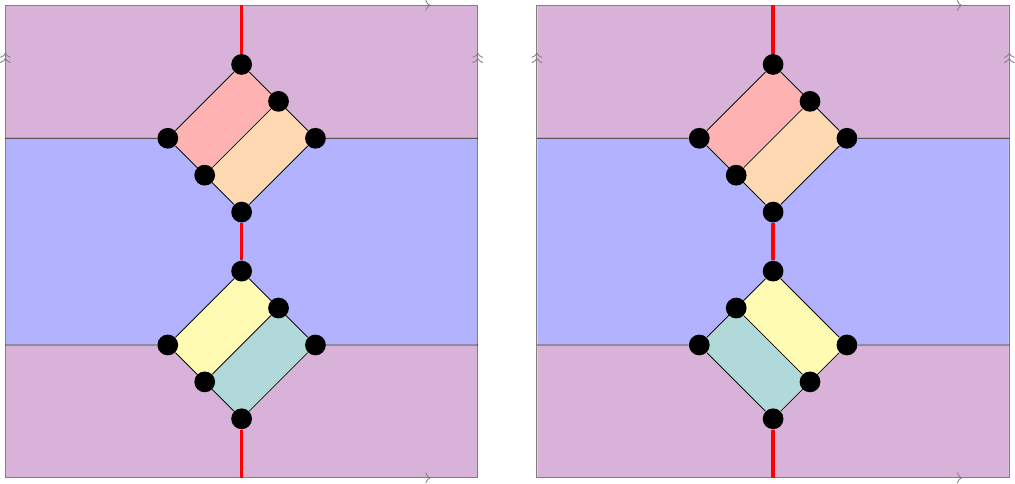}
    \caption{The two unlabelled embeddings $\varphi$ and $\psi$ of $F_{11}$ into the torus}
    \label{fig:F11InTorusUnlabelled}
\end{figure}
We now examine the choices for labelled embeddings of $F_{11}$ into the torus.
Recall two embeddings $\varphi_1$ and $\varphi_2$ of a graph $G$ into a surface $\Sigma$ are \emph{equivalent} if there exists a homeomorphism of $\Sigma$ mapping the image of $G$ under $\varphi_2$ to the image of $G$ under $\varphi_1$.
Note that this definition extends to labelled embeddings in the natural way.
As a first step, we show that up to equivalence we have control over the labelling of some of vertices in the embeddings of $F_{11}$.

    \begin{lemma}
    \label{lem:Fixing2inF11}
    Given an embedding $\varphi: F_{11} \hookrightarrow T^2$, there exists a homeomorphism of the torus $h: T^2 \rightarrow T^2$ such that $\widetilde\varphi:=h \circ \varphi$ is as in Figure \ref{fig:F11LabelledOptions}, where the orange vertices are $\{\widetilde\varphi(A), \widetilde\varphi(1)\}$ and the teal ones are $\{\widetilde\varphi(A'), \widetilde\varphi(8)\}$.
    \begin{figure}[H]
        \centering
        \includegraphics[height=4.5cm]{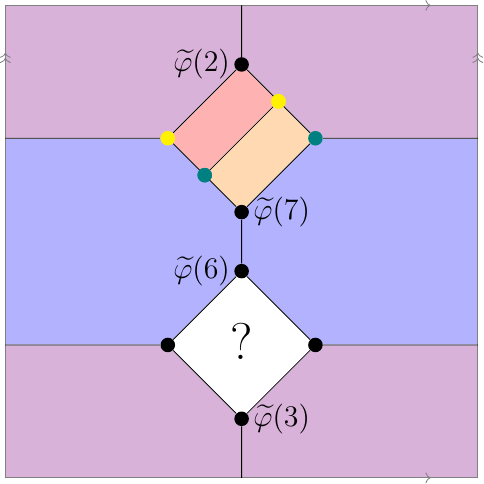}
        \caption{Partially labelled embedding of $F_{11}$}
        \label{fig:F11LabelledOptions}
    \end{figure}
\end{lemma}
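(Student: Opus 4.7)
The plan is to combine the classification of toroidal embeddings of $F_{11}$ from \cite{Kramer24} with an analysis of how self-homeomorphisms of the torus act on the labellings of an embedded copy of $F_{11}$. By \cite{Kramer24}, up to equivalence there are exactly two unlabelled embeddings $\varphi$ and $\psi$ of $F_{11}$ into $T^2$, depicted in Figure \ref{fig:F11InTorusUnlabelled}. Given an arbitrary embedding of the labelled graph $F_{11}$, the first step is to apply a homeomorphism $h_0$ of $T^2$ that carries its image onto one of these two reference embeddings as a set. What remains is then a purely combinatorial question about where the vertex labels land.

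Next, I would identify, for each of the two reference embeddings, the subgroup $\Gamma \leq \operatorname{Aut}(F_{11})$ consisting of those graph automorphisms that are realized by self-homeomorphisms of the torus preserving the embedded graph setwise; equivalently, $\Gamma$ is the subgroup of automorphisms of $F_{11}$ preserving the set of facial walks of the embedding up to orientation. Using the edge-orbit decomposition of $\operatorname{Aut}(F_{11})$ already recorded in the proof of Proposition \ref{prop:F11Torus}, $\Gamma$ can be read off from the visible geometric symmetries (rotations and reflections) of each embedding.

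Finally, I would check directly that the $\Gamma$-orbit of the quadruple $(A,A',1,8)$ contains a representative placing $\{A,1\}$ at the two orange positions and $\{A',8\}$ at the two teal positions of Figure \ref{fig:F11LabelledOptions}, for each of the two reference embeddings. Post-composing $h_0$ with any torus homeomorphism realizing the needed element of $\Gamma$ then yields the desired $h$, and setting $\widetilde\varphi := h \circ \varphi$ completes the argument.

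The main obstacle is this last step: a small but careful finite case analysis carried out on each of the two reference embeddings. The subtlety is that $\Gamma$ is generally strictly smaller than $\operatorname{Aut}(F_{11})$, since not every abstract automorphism extends to a self-homeomorphism of $T^2$, so one must confirm that the action of $\Gamma$ alone is rich enough to move the four distinguished labels into the prescribed coloured positions. Because $F_{11}$ has only ten vertices and the distinguished positions come in two pairs, this check should reduce to direct inspection of the two embeddings in Figure \ref{fig:F11InTorusUnlabelled} and require no further input beyond the classification from \cite{Kramer24}.
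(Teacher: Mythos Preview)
Your plan is sound and would succeed, but it is more abstract than the paper's argument and misses the shortcut that makes the proof quick. The paper never computes the subgroup $\Gamma\le\operatorname{Aut}(F_{11})$ realized by torus self-homeomorphisms. Instead it observes that $\{23,67\}$ is the \emph{unique} two-edge cut of $F_{11}$; since this is a graph invariant, in either unlabelled reference embedding the edges $23$ and $67$ are forced to the two highlighted positions of Figure~\ref{fig:F11InTorusUnlabelled}. Vertex $2$ then lies at one of the four endpoints of those two edges, and three explicit homeomorphisms (a vertical reflection, a half-rotation in one torus direction, and a horizontal reflection) are checked to act transitively on these four slots while preserving the unlabelled picture. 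Once $\widetilde\varphi(2)$ is pinned down, the images of $3,6,7$ are determined, and $\{A,1\}$ (respectively $\{A',8\}$) are simply the remaining neighbours of $2$ (respectively $7$), which are exactly the orange (respectively teal) positions.

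Your route via $\Gamma$ buys generality and is conceptually clean, but carrying it out means enumerating the facial-walk-preserving automorphisms for both embeddings and then verifying an orbit statement, a longer case analysis than the paper's. The paper's cut-set trick replaces all of that with a single structural observation and three named homeomorphisms; in particular it never needs to know which of the two unlabelled embeddings is in play, since the argument only uses features common to both.
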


\begin{proof}
    Consider the labelling of $F_{11}$ as depicted in Figure \ref{fig:F11Labelled}. First note that the pair of edges $\{23, 67\}$ is the unique minimum cut set of $F_{11}$.
        Thus, one of the edges highlighted red in Figure \ref{fig:F11InTorusUnlabelled} is the edge $23$, while the other one is the edge $67$.
        \begin{figure}[H]
            \centering
            \includegraphics[height=3.5cm]{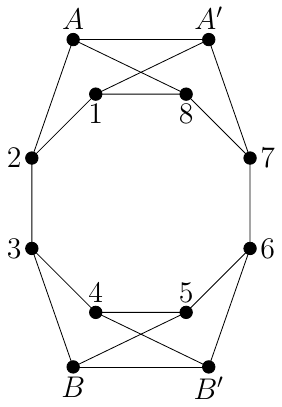}
            \caption{Labelled $F_{11}$}
            \label{fig:F11Labelled}
        \end{figure}

        We will focus on the Vertex $2$. Once we have chosen in which of the four possible locations illustrated below
        $\varphi(2)$ lands, the placements of $\varphi(3), \varphi(6),$ and  $\varphi(7)$ are completely determined.
        \begin{figure}[H]
            \centering
            \includegraphics[width=\textwidth]{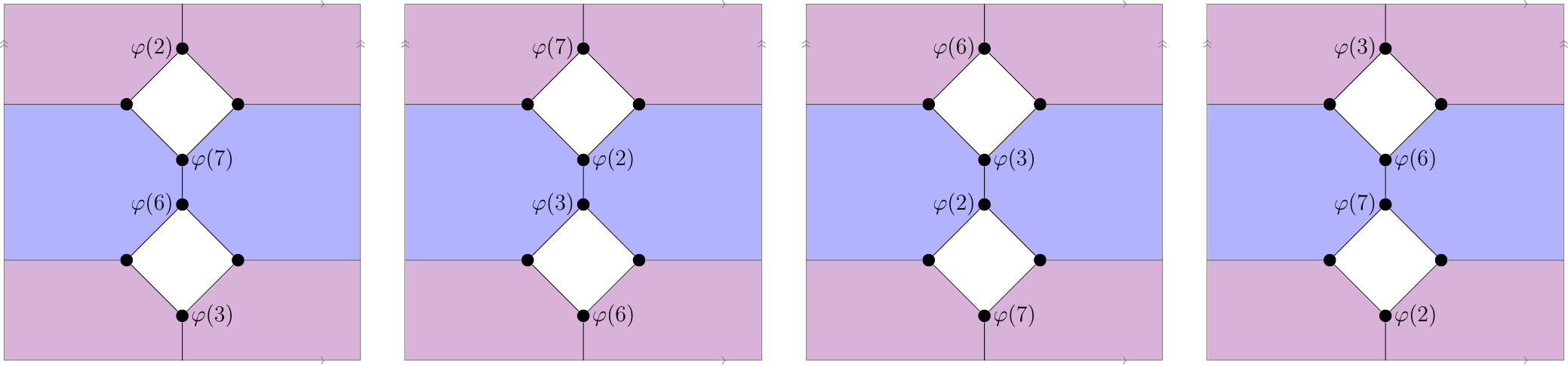}
        \end{figure}
        If $\varphi(2)$ is not in the spot indicated in Figure \ref{fig:F11LabelledOptions}, we can use a vertical reflection and/or cut our picture of torus along a horizontal line going through the middle and identify the current top and bottom curves to move $\varphi(2)$ to the correct position. Note that the latter operation can be realised by a rotation of the torus when thought of as a 3-dimensional donut.
        As these operations might change the orientation of the adjacent facial $4$-cycles at the top of Figure \ref{fig:F11LabelledOptions}, we can use a horizontal reflection if necessary to ensure that $\widetilde\varphi$ has the same image as $\varphi$ except possibly for the ambiguity displayed in Figure \ref{fig:F11LabelledOptions}.
        Moreover, as Vertex $2$ is adjacent to Vertices $1, 3,$ and $A$, the orange vertices in Figure \ref{fig:F11LabelledOptions} are the images of Vertices $1$ and $A$. Similarly, as Vertex $7$ is adjacent to Vertices $6, 8,$ and $A'$, the turquoise vertices in Figure \ref{fig:F11LabelledOptions} are the images of Vertices $8$ and $A'$.
\end{proof}

While Lemma \ref{lem:Fixing2inF11} does not completely determine the placement of all vertices, and not even which unlabelled embedding of $F_{11}$ we are working with, the information gathered suffices to prove Proposition \ref{prop:H1-H4NotInTorus}.

\begin{prop}
    \label{prop:H1-H4NotInTorus}
    $H_i$ is not toroidal for $i\in\{1, \ldots, 4\}$.
\end{prop}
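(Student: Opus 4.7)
The plan is to argue by contradiction: suppose some $H_i$ embeds into the torus. Since each $H_i$ contains a subgraph homeomorphic to $F_{11}$, restricting the embedding produces a toroidal embedding $\varphi$ of $F_{11}$, and to recover $H_i$ we must be able to draw the additional edge $e$ inside a single face of $\varphi(F_{11})$ with its endpoints on the two prescribed edges of $F_{11}$ (namely $AA'$ and $18$ for $H_1$, $A2$ and $67$ for $H_2$, $23$ and $67$ for $H_3$, and $A2$ and $B'6$ for $H_4$). The task therefore reduces to inspecting each face of every relevant labelled torus embedding of $F_{11}$ and verifying that no face has both prescribed edges on its boundary.

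First I would use Lemma \ref{lem:Fixing2inF11} to cut down the number of labelled embeddings to check: after applying a suitable self-homeomorphism of $T^2$, any toroidal embedding of $F_{11}$ looks like Figure \ref{fig:F11LabelledOptions}, so only the choice between the two unlabelled embeddings of Figure \ref{fig:F11InTorusUnlabelled} and the swaps $\{\varphi(A),\varphi(1)\}$ and $\{\varphi(A'),\varphi(8)\}$ remain. This yields a short explicit list of at most eight labelled embeddings. For each, I would read off the facial walks (each face is a disk, so the walks are determined by the picture) and for each $i$ compile a table of which pairs of edges of $F_{11}$ co-bound a face.

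With that table in hand, the verification for $H_1, H_2, H_3, H_4$ becomes mechanical: for $H_3$, for example, one checks that the two cut edges $23$ and $67$ never lie on a common face in either unlabelled embedding (indeed they are the distinguished red edges in Figure \ref{fig:F11InTorusUnlabelled}, and in both pictures the faces containing $23$ and the faces containing $67$ are disjoint regardless of labelling). For $H_1, H_2, H_4$, the two relevant edges are distinguished up to the Lemma \ref{lem:Fixing2inF11} ambiguity, and one verifies in each labelled picture that $AA'$ does not share a face with $18$, that $A2$ does not share a face with $67$, and that $A2$ does not share a face with $B'6$. Since the new edge $e$ must be drawn in one face, this gives the required contradiction.

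The main obstacle is bookkeeping: the labelling ambiguity left by Lemma \ref{lem:Fixing2inF11} (the orange pair $\{\varphi(A),\varphi(1)\}$, the teal pair $\{\varphi(A'),\varphi(8)\}$, and the choice between the two unlabelled embeddings $\varphi$ and $\psi$ of Figure \ref{fig:F11InTorusUnlabelled}) must be handled uniformly, so that the face-sharing analysis covers all possibilities. In particular, one must check that the ambiguity never accidentally produces a face containing both prescribed edges; this is where the fact that $\{23,67\}$ is the unique $2$-cut, together with the forced placement of $\varphi(2)$, is used to pin down the local structure near each face sufficiently to rule out all cases at once.
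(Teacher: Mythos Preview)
Your proposal is correct and follows essentially the same approach as the paper: reduce via Lemma~\ref{lem:Fixing2inF11} to a short list of partially labelled embeddings of $F_{11}$, then check that the two prescribed edges never co-bound a face. One small refinement in the paper worth noting: instead of describing $H_1$ as $F_{11}\cup(AA'-18)$, the paper uses the isomorphic description $F_{11}\cup(A2-A'7)$ (recall cases (5) and (6) in Proposition~\ref{prop:F11Torus} are isomorphic), so that each relevant edge is incident to one of the vertices $2,7$ already pinned down by Lemma~\ref{lem:Fixing2inF11}; this keeps the residual ambiguity to two possible positions per edge and lets all four graphs be handled in a single figure rather than a case table.
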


\begin{proof}
    By Lemma \ref{lem:Fixing2inF11}, we may assume the images of Vertices $2, 3, 6,$ and $7$ to be in the locations indicated in Figure \ref{fig:F11LabelledOptions}.
     Recall that $H_1 \cong F_{11} \cup (\textcolor{red}{A2}-\textcolor{blue}{A'7})$, $H_2 \cong F_{11} \cup (\textcolor{red}{A2}-\textcolor{green}{67})$, $H_3 \cong F_{11} \cup (\textcolor{brown}{23}-\textcolor{green}{67})$, and  $H_4 \cong F_{11} \cup (\textcolor{red}{A2}-\textcolor{violet}{B'6})$.
        \begin{figure}[H]
            \centering
            \includegraphics[height=5.5cm]{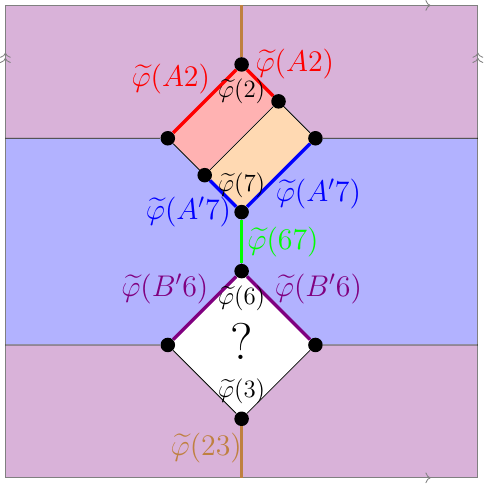}
            \caption{Possible placements of edges $A2, A'7,$ $B'6, 23, 67$}
            \label{fig:H1-H4NotTorus}
        \end{figure}        
    \noindent We observe that $\widetilde\varphi(\textcolor{red}{A2})$ is in one of two places as shown in Figure \ref{fig:H1-H4NotTorus}. Similarly, $\widetilde\varphi(\textcolor{blue}{A'7})$ is one of two places. It is now easy to see that in none of the four possible combinations does there exist a common face whose boundary cycle contains $\widetilde\varphi(\textcolor{red}{A2})$ and $\widetilde\varphi(\textcolor{blue}{A'7})$. Thus, $\widetilde\varphi$ does not extend to an embedding of $H_1$. 

    We have indicated the possible placements of the edges involved in building $H_2, H_3, H_4$ in Figure \ref{fig:H1-H4NotTorus}. Similar to the discussion about $H_1$, we quickly see that it is impossible to add the edges required to build $H_2, H_3$ or $H_4$ to the embedding in Figure \ref{fig:H1-H4NotTorus}, finishing the proof.
\end{proof}

We now show that $H_5, H_6, \ldots, H_9$ do not embed into the torus.
Recall that each of these graphs contains a subgraph homeomorphic to $G_1$.
Moreover, by \cite{Kramer24} there are exactly two inequivalent embeddings of $G_{1}$ into the torus.
\begin{figure}[H]
    \centering
    \includegraphics[height=3cm]{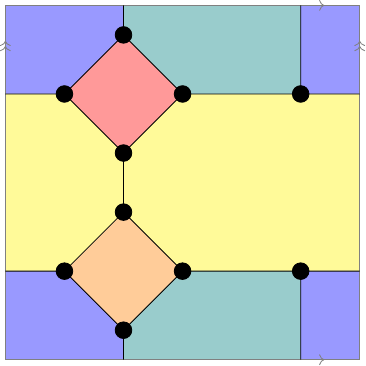} \hspace{.1cm}
    \includegraphics[height=3cm]{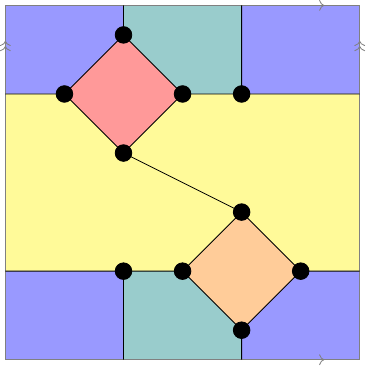}
    \caption{The two unlabelled embeddings $\varphi$ and $\psi$ of $G_1$ into the torus}
    \label{fig:G1InTorusUnabelled}
\end{figure}
\noindent To examine the choices for labelled embeddings of $G_{1}$ into the torus, we establish the following three lemmas.

\begin{lemma}
    \label{lem:LabelledStackedG1}
    Given a labelled embedding $\Phi: G_1 \hookrightarrow T^2$ that has underlying unlabelled embedding $\varphi$ as pictured on the left in Figure \ref{fig:G1InTorusUnabelled}, there exists a homeomorphism of the torus $h: T^2 \longrightarrow T^2$ such that $\widetilde\Phi:= h \circ \Phi$ is one of the partially labelled embeddings pictured in Figure \ref{fig:StackedG1Labelled}.
    \begin{figure}[H]
        \centering
        \includegraphics[width=\textwidth]{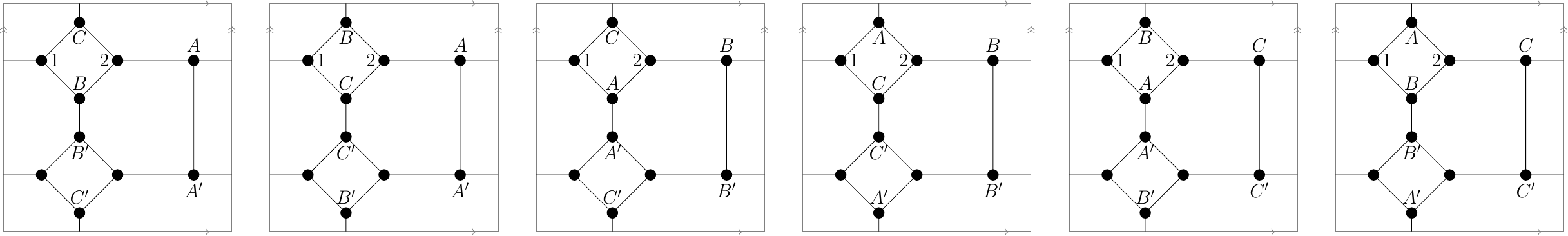}
        \caption{Partially labelled embeddings $\widetilde\Phi_i$ of $G_1$ into the torus}
        \label{fig:StackedG1Labelled}
    \end{figure}
\end{lemma}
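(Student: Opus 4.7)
The plan is to imitate the strategy used in the proof of Lemma \ref{lem:Fixing2inF11}: identify combinatorial features of $G_1$ that any homeomorphism of $T^2$ must respect, use self-homeomorphisms of the torus to bring these features into a canonical position, and then enumerate the remaining labelling freedom.

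First I would exploit the fact that the three bridges $AA'$, $BB'$, $CC'$ are precisely the cut-edges of $G_1$, and the two $K_{2,3}$ subgraphs are the two components of $G_1$ with the bridges removed. Hence for any labelled embedding $\Phi$, the three bridges are mapped to three combinatorially distinguished edges of $\varphi(G_1)$, and the two $K_{2,3}$-sides are distinguishable as well. In the stacked picture on the left of Figure \ref{fig:G1InTorusUnabelled} the three bridges appear as three arcs separating the two $K_{2,3}$-blocks in a threefold-symmetric pattern; a suitable rotation of $T^2$ lets us assume $\Phi(AA')$ lands on a prescribed one of these three arcs, and a reflection of $T^2$ swapping the two sides of this bridge, if needed, fixes which endpoint is labelled $A$ and which is $A'$.

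Once $\Phi(AA')$ is pinned down, the graph structure of $G_1$ forces the rest of the labelling up to a small set of choices. Indeed, the common neighbours of $A$ inside the $A$-side $K_{2,3}$ are precisely the vertices $1$ and $2$, so $\{\Phi(1),\Phi(2)\}$ must occupy the two positions adjacent to $\Phi(A)$ inside that $K_{2,3}$ in $\varphi$; which is labelled $1$ and which is $2$ gives a binary choice not resolved by $\varphi$ alone. Similarly, the two remaining degree-three vertices on the $A$-side get the labels $B$ and $C$ in either of two ways, and once this is chosen, the bridge structure propagates the labels uniquely to $B', C'$ (and hence $3, 4$) on the opposite side. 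The resulting list of residual labellings matches exactly the configurations pictured in Figure \ref{fig:StackedG1Labelled}.

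The main obstacle will be checking that the list in Figure \ref{fig:StackedG1Labelled} is neither redundant nor incomplete, i.e.\ that no further self-homeomorphism of $T^2$ preserving $\varphi(G_1)$ collapses two of the listed partially labelled embeddings, and that every binary choice identified above really does produce a distinct equivalence class. This reduces to computing the stabiliser of $\varphi$ inside the group of self-homeomorphisms of the torus and tracking its induced action on $V(G_1)$; the check is finite and concrete once the canonical placement of $AA'$ is fixed. With this verification in hand, every labelled embedding of $G_1$ whose underlying unlabelled embedding is the stacked one is equivalent to one of the partially labelled embeddings in Figure \ref{fig:StackedG1Labelled}.
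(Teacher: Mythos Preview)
Your overall strategy---locate the bridges combinatorially, use torus self-homeomorphisms to pin down part of the labelling, then enumerate what remains---is exactly the paper's. But two concrete steps fail.

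First, $AA', BB', CC'$ are not cut-edges: $G_1$ is $3$-connected, so it has no cut-edges at all. You need a different invariant to recognise the bridges inside $\varphi(G_1)$; the paper uses that they are precisely the edges of $G_1$ lying in no $4$-cycle (equivalently, they form the unique $3$-edge cut separating two cyclic pieces).

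Second, and more seriously, your normalisation step is unjustified and in fact wrong. You assume a rotation of $T^2$ can carry $\Phi(AA')$ onto any prescribed one of the three bridge arcs, i.e.\ that the stabiliser of the unlabelled embedding $\varphi$ acts transitively on them (``threefold-symmetric pattern''). It does not: that stabiliser has order $4$, acting simply transitively on the four positions available to the vertices $1,2,3,4$, and a group of order $4$ cannot have an orbit of size $3$; one bridge arc is genuinely distinguished from the other two in the stacked picture. The paper therefore normalises the position of vertex~$1$ rather than the edge $AA'$: the four candidate positions for $\Phi(1)$ \emph{do} form a single orbit, so one may fix $\Phi(1)$, whereupon $\Phi(2)$ is forced (it sits at distance two from $\Phi(1)$ in every facial $4$-cycle containing $\Phi(1)$). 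The residual freedom is then the $3!=6$ labellings of $\{A,B,C\}$---which determine $A',B',C'$ via the bridges---together with the $2$ labellings of $\{3,4\}$, yielding the six partially labelled pictures of Figure~\ref{fig:StackedG1Labelled}, each standing for two full labellings. Your scheme would produce only $8$ full labellings rather than $12$, so cases are lost. (Relatedly, your claim that the labels of $3$ and $4$ are forced once $B',C'$ are known is false: swapping $3$ and $4$ while fixing every other vertex is an automorphism of $G_1$.)
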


\begin{proof}
        First note that by rotating the torus when thought of as a 3-dimensional donut, we can transform the picture on the left in Figure \ref{fig:G1InTorusUnabelled} into the picture in Figure \ref{fig:StackedSymmetric}. Notice in Figure \ref{fig:StackedSymmetric} that one of the edges of $G_1$ now lies on the boundary of the square.
        \begin{figure}[H]
            \centering
            \includegraphics[height=3cm]{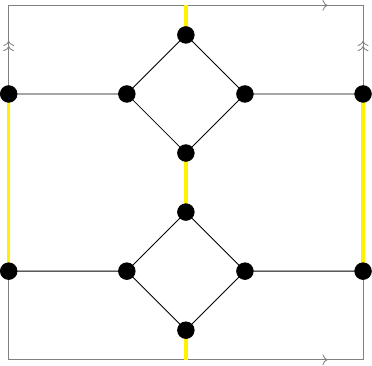}
            \caption{A more symmetric drawing of the embedding $\varphi: G_1 \hookrightarrow T^2$}
            \label{fig:StackedSymmetric}
        \end{figure}
        Recall the structure of $G_1$: The bridges $AA', BB', CC'$ are not part of any $4$-cycle in $G_1$, and the shortest path between two endpoints of distinct bridges has length two. Thus, the edges highlighted in Figure \ref{fig:StackedSymmetric} are the bridges. 
            \begin{figure}[H]
                \centering
                \includegraphics[height=3cm]{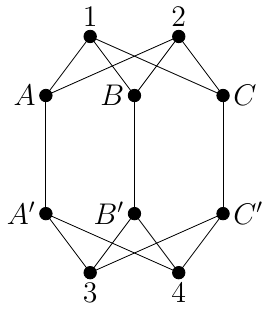}
            \end{figure}

        Consequently, the image of Vertex 1 has to be in one of four places. However, using symmetries of the torus, we may assume the image of Vertex 1 to be in the left corner of the top facial $4$-cycle.
        Note that since any $4$-cycle containing the Vertex $1$ also contains Vertex $2$ at distance two, fixing the image of Vertex $1$ also fixes the image of Vertex $2$.
        Hence, we see that a labelled embedding of $G_1$ is of the form as pictured in Figure \ref{fig:StackedG1Labelled}, where the unlabelled vertices are the images of Vertices $3$ and $4$. In particular, each partially labelled embedding pictured in Figure \ref{fig:StackedG1Labelled} gives two completely labelled embeddings.
\end{proof}

\begin{lemma}
    \label{lem:ShiftedG1Hexagonal}
    The unlabelled embedding $\widetilde\psi: G_1 \hookrightarrow T^2$ in Figure \ref{fig:ShiftedDiamondsHex} is equivalent to the embedding $\psi: G_1 \hookrightarrow T^2$ on the right in Figure \ref{fig:G1InTorusUnabelled}.
    \begin{figure}[H]
        \centering
        \includegraphics[height=4cm]{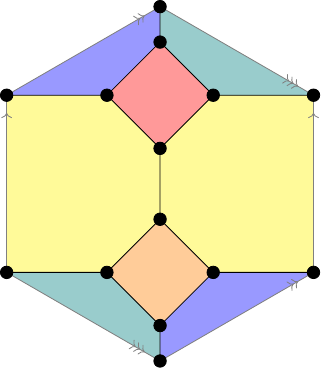}
        \caption{The embedding $\psi:G_1 \hookrightarrow T^2$ drawn on a hexagonal representation of the torus}
        \label{fig:ShiftedDiamondsHex}
    \end{figure}
\end{lemma}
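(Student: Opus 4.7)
The plan is to establish the equivalence $\widetilde\psi \simeq \psi$ by comparing the combinatorial face data of the two embeddings. Since the torus is orientable and connected, two cellular embeddings of a graph into $T^2$ are equivalent if and only if they share the same rotation system at every vertex, equivalently, the same multiset of facial walks. So it suffices to read off the facial walks from Figure \ref{fig:ShiftedDiamondsHex} and from the right-hand side of Figure \ref{fig:G1InTorusUnabelled} and verify that the resulting lists agree as (unordered) multisets of cyclic walks.

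First, I would apply Euler's formula to pin down the expected combinatorics: any cellular embedding of $G_1$ (with $|V|=10$ and $|E|=15$) into the torus has exactly $|F|=|E|-|V|+\chi(T^2)=15-10+0=5$ faces, with face-boundary lengths summing to $2|E|=30$. Next, for each of the two pictures I would trace the five face boundaries in turn, handling crossings of the boundary of the fundamental polygon via the prescribed side identifications: two pairs for the square, three pairs for the hexagon. Finally, I would match up the two lists of five cyclic walks edge-by-edge.

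A useful shortcut cuts the work further. By \cite{Kramer24} there are only two inequivalent embeddings of $G_1$ into the torus, namely the stacked embedding $\varphi$ on the left of Figure \ref{fig:G1InTorusUnabelled} and the shifted embedding $\psi$ on the right. Consequently, it is enough to show that $\widetilde\psi$ has the same multiset of face-boundary lengths as $\psi$ and differs from that of $\varphi$; this immediately forces $\widetilde\psi \simeq \psi$. Having already classified $\varphi$ and $\psi$ in the preceding analysis, the comparison reduces to a straightforward tally.

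The main obstacle is careful bookkeeping on the hexagon. With three pairs of identified sides rather than two, it is easy to mis-glue a corner or lose track of which face continues across which side, either by accidentally merging two distinct faces into one walk or splitting a single face into two. To guard against this I would verify after the tracing that I have produced exactly $5$ closed walks with total length $30$; if this holds and each walk bounds a region that is homeomorphic to an open disk, then the embedding is genuinely cellular and the reading is correct.
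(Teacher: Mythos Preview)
Your approach is correct in outline but takes a genuinely different route from the paper's. The paper constructs the equivalence explicitly: it cuts the hexagonal fundamental domain and re-glues along identified sides to obtain the standard square representation of $T^2$, then applies a rotation of the torus together with an isotopy supported in a disk to carry the image of $G_1$ onto the right-hand picture in Figure~\ref{fig:G1InTorusUnabelled}; in other words, it exhibits the homeomorphism $h$ directly. Your route is combinatorial: compare face data and, via the shortcut, invoke the classification from \cite{Kramer24} so that it suffices to check that the face-length multiset of $\widetilde\psi$ matches that of $\psi$ and not that of $\varphi$. This is cleaner and less prone to picture-tracking errors than following a multi-step surface transformation, but it rests on two points you should make explicit. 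First, that $\varphi$ and $\psi$ actually have distinct face-length multisets---inequivalent embeddings need not differ on this coarse invariant in general, so this must be verified from the figures rather than assumed. Second, for \emph{unlabelled} embeddings, equality of rotation systems or facial walks is only well-defined modulo the action of $\mathrm{Aut}(G_1)$ and orientation reversal, which is slightly more than ``the multisets of cyclic walks agree''; your shortcut via face-length multisets sidesteps this issue, but your fuller plan as stated needs the qualification.
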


\begin{proof}
        First note that the hexagonal representation of the torus can be transformed into the standard rectangular representation as shown below.
        \begin{figure}[H]
            \centering
            \includegraphics[width=.85\textwidth]{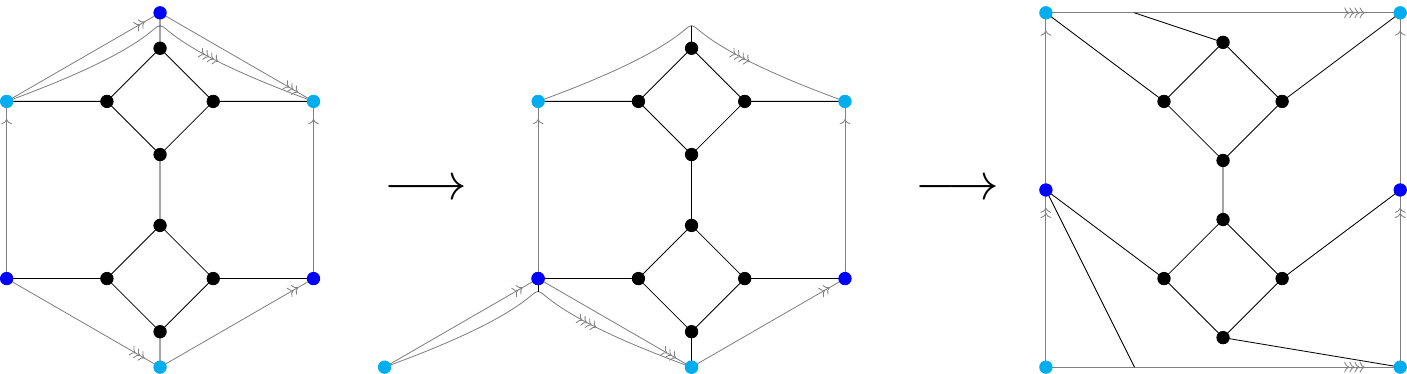}
        \end{figure}
        By rotating the torus and deforming portions of the embedding inside of a region homeomorphic to a disk, we obtain the claim.
        \begin{figure}[H]
            \centering
            \includegraphics[width=.85\textwidth]{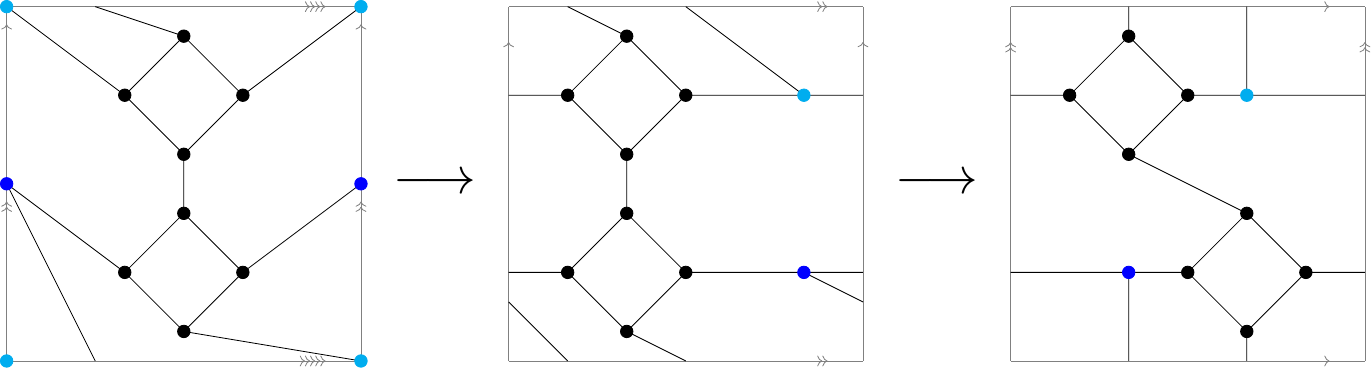}
        \end{figure}
        \vspace{-.5cm}
\end{proof}

\begin{lemma}
    \label{lem:LabelledShiftedG1}
    Given a labelled embedding $\Psi: G_1 \hookrightarrow T^2$ that has underlying unlabelled embedding $\widetilde\psi$ as pictured in Figure \ref{fig:ShiftedDiamondsHex}, there exists a homeomorphism of the torus $h: T^2 \longrightarrow T^2$ such that $\widetilde\Psi:= h \circ \Psi$ is one of the six partially labelled embeddings in Figure \ref{fig:ShiftedG1Labelled}.
    \begin{figure}[H]
        \centering
        \includegraphics[width=\textwidth]{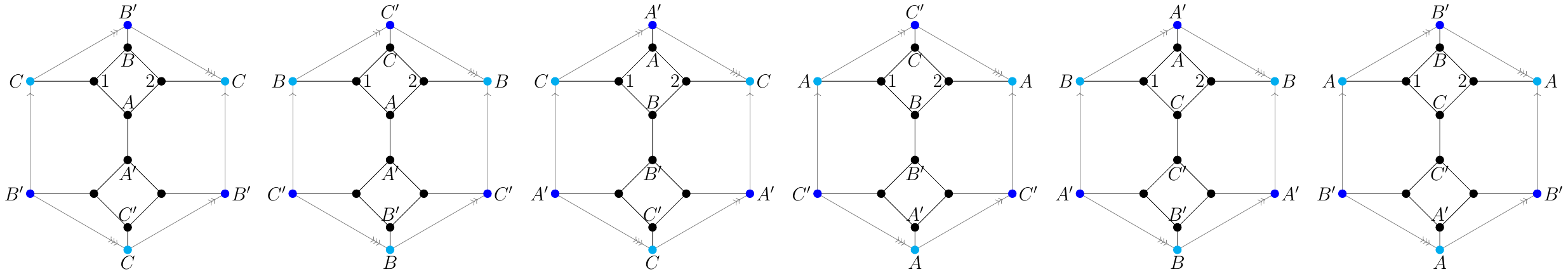}
        \caption{Partially labelled embeddings $\widetilde\Psi_i$ of $G_1$ into the torus}
        \label{fig:ShiftedG1Labelled}
    \end{figure}
\end{lemma}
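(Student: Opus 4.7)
The plan is to mirror the strategy of Lemma \ref{lem:LabelledStackedG1}, applied now to the hexagonal embedding $\widetilde\psi$. The first step is to use the structural characterization of the bridges recalled in the proof of Lemma \ref{lem:LabelledStackedG1}: the bridges $AA'$, $BB'$, $CC'$ are exactly the edges of $G_1$ that do not lie on any $4$-cycle, and the shortest path between two endpoints of distinct bridges has length two. Reading this off Figure \ref{fig:ShiftedDiamondsHex} immediately singles out which three edges in the hexagonal drawing must carry the bridges, and therefore separates the vertices in the picture into the six images of $\{A,A',B,B',C,C'\}$ and the four images of $\{1,2,3,4\}$.

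Next, I would exploit the visible three-fold rotational symmetry of the hexagonal fundamental domain. Rotation by $2\pi/3$ about the center extends to a self-homeomorphism of $T^2$ and permutes the three bridges cyclically. Composing $\Psi$ with a suitable power of this rotation, I may assume that a designated bridge, say the image of $AA'$, lies in a prescribed position in the hexagon. This consumes the rotational freedom but leaves (i) the choice of orientation along the chosen bridge (whether $A$ or $A'$ is at the designated end) and (ii) a possible reflection that also extends to a torus homeomorphism.

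Having fixed $\widetilde\Psi(A)$ and $\widetilde\Psi(A')$ up to these residual choices, I would then propagate the labelling. Since Vertex $1$ is the unique common neighbor of $A$, $B$, $C$ in $G_1$, the image $\widetilde\Psi(1)$ is the unique vertex of the drawing simultaneously adjacent (via non-bridge edges) to each of the three bridge-endpoints on the $\{A,B,C\}$-side; Vertex $2$ is similarly forced as the other common neighbor, and analogously for $3,4$ on the $\{A',B',C'\}$-side. The remaining ambiguity collapses to a small number of cases: the binary choice of orientation of the fixed bridge, combined with whether $1$ and $2$ (respectively $3$ and $4$) are interchanged, all subject to the constraint that some relations among these are realized by an actual torus homeomorphism (e.g.\ the reflection available in the hexagonal picture). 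The count is expected to match the six partially labelled embeddings in Figure \ref{fig:ShiftedG1Labelled}.

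The main obstacle is the bookkeeping step: I need to carefully verify which of the naive symmetries of the hexagonal picture actually descend to homeomorphisms of $T^2$ respecting the edge identifications on the boundary of the fundamental domain, and then check that after fixing one bridge the residual symmetry group acts on the set of labelings with exactly six orbits. Once this is established, each orbit is represented by one of the drawings in Figure \ref{fig:ShiftedG1Labelled}, completing the proof.
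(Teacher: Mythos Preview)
Your overall plan mirrors the paper's strategy, but the key normalization step differs, and the difference matters. You propose to use a three-fold rotation of the hexagonal fundamental domain to pin down the position of the bridge $AA'$. However, the unlabelled embedding $\widetilde\psi$ does \emph{not} enjoy a three-fold rotational symmetry: the paper records that the relevant symmetry group of the picture is an ``obvious action of $\ZZ_2\times\ZZ_2$'', and this is consistent with the final count (each of the six partial labellings in Figure~\ref{fig:ShiftedG1Labelled} yields two full labellings, so twelve in all; since $|\mathrm{Aut}(G_1)|=48$, the stabilizer of the unlabelled image has order $48/12=4$, not a multiple of~$3$). So although the bare hexagonal domain is rotationally symmetric, the drawn image of $G_1$ is not, and your step ``composing $\Psi$ with a suitable power of this rotation'' would move $\widetilde\psi$ to an inequivalent picture rather than normalize within the same one. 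Your own caveat about checking which hexagon symmetries descend to symmetries of the embedding is exactly the point where the argument breaks.

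The paper's fix is to normalize differently: instead of fixing a bridge, it observes that Vertex~$1$ (one of the four $K_{2,3}$-side vertices $\{1,2,3,4\}$) must land in one of four distinguished spots, and the $\ZZ_2\times\ZZ_2$ action is transitive on those four spots. Fixing $\widetilde\Psi(1)$ then forces $\widetilde\Psi(2)$ (since $1$ and $2$ lie at distance two in every facial $4$-cycle containing $1$), and what remains is the $3!$ choices for which bridge goes where, giving the six pictures in Figure~\ref{fig:ShiftedG1Labelled}, each with a residual $3\leftrightarrow 4$ ambiguity. If you replace your rotation step with this $\ZZ_2\times\ZZ_2$ normalization of Vertex~$1$, the rest of your propagation argument goes through essentially as written.
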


\begin{proof}
        \noindent Similarly to the proof of Lemma \ref{lem:LabelledStackedG1}, by the structure of $G_1$, the edges highlighted in Figure \ref{fig:G1ShiftedHighlighted} are the bridges.
        \begin{figure}[H]
            \centering
            \includegraphics[height=3.75cm]{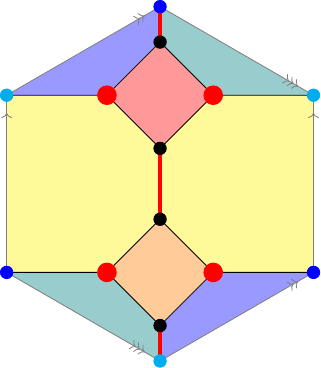}
            \caption{Embedding of $G_1$ into the torus with bridges highlighted}
            \label{fig:G1ShiftedHighlighted}
        \end{figure}
        \noindent Consequently, the image of Vertex $1$ has to be in one of the four highlighted places.
        Using the obvious action of $\ZZ_2 \times \ZZ_2$ that preserves the unlabelled image of $G_1$, we may assume the image of Vertex $1$ to be in the left corner of the top facial $4$-cycle.
        Note that since any $4$-cycle containing the Vertex $1$ also contains the Vertex $2$ at distance two, fixing the image of Vertex $1$ also fixes the image of Vertex $2$.
        Hence, we see that a labelled embedding of $G_1$ is of the form as pictured in Figure \ref{fig:ShiftedG1Labelled}, where the unlabelled vertices are the images of Vertices $3$ and $4$. In particular, each partially labelled embedding pictured in Figure \ref{fig:ShiftedG1Labelled} gives rise to two completely labelled embeddings.
\end{proof}

\begin{prop}
    \label{prop:H5-H9NotInTorus}
    $H_i$ is not toroidal for $i\in\{5, \ldots, 9\}$.
\end{prop}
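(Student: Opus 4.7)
The plan is to argue, in direct analogy with the proof of Proposition \ref{prop:H1-H4NotInTorus}, that every labelled toroidal embedding of the $G_1$-subgraph of $H_i$ fails to extend to an embedding of $H_i$. Since each $H_i$ with $i \in \{5,\ldots,9\}$ is of the form $G_1 \cup e_1 \cup e_2$ via the identifications recorded in Lemma \ref{lem:3connectedG1}, any toroidal embedding of $H_i$ restricts to a toroidal embedding of $G_1$, and by \cite{Kramer24} this restriction is equivalent to one of the two unlabelled embeddings $\varphi$ or $\psi$ shown in Figure \ref{fig:G1InTorusUnabelled}.

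First I would reduce the number of labelled embeddings of $G_1$ that need to be examined. For the underlying unlabelled embedding $\varphi$, Lemma \ref{lem:LabelledStackedG1} restricts the labelled possibilities to the three partially labelled embeddings $\widetilde\Phi_i$ in Figure \ref{fig:StackedG1Labelled}, each giving rise to two completely labelled embeddings once the positions of Vertices $3$ and $4$ are chosen. For the underlying unlabelled embedding $\psi$, I would invoke Lemma \ref{lem:ShiftedG1Hexagonal} to redraw the embedding on the hexagonal representation and then Lemma \ref{lem:LabelledShiftedG1} to restrict to the six partially labelled embeddings $\widetilde\Psi_i$ in Figure \ref{fig:ShiftedG1Labelled}, again with two choices per embedding. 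This yields a finite list of completely labelled candidate embeddings of $G_1$.

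Next, I would translate each $H_i$ with $i \in \{5,\ldots,9\}$ into its representation as $G_1 \cup e_1 \cup e_2$ as determined in Lemma \ref{lem:3connectedG1}, recording the specific edges of $G_1$ to which the two new edges attach and the vertices that subdivide them. For each completely labelled embedding of $G_1$, I would list the facial walks and check, for each new edge $e_k$, whether the two edges of $G_1$ to which $e_k$ attaches share a common face. If not, the embedding does not extend and that case is ruled out. If $e_1$ can be placed in a common face $f_1$, I would pass to the facial walks of the resulting embedding of $G_1 \cup e_1$ and repeat the check for $e_2$. Ruling out each case reduces to comparing a pair of edges against a small list of facial walks, exactly as in the proof of Proposition \ref{prop:H1-H4NotInTorus}.

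The main obstacle is purely combinatorial: five graphs times nine partially labelled embeddings times two label choices for Vertices $3$ and $4$ produces roughly ninety verifications. I expect this to collapse substantially, since in the majority of cases the endpoints of $e_1$ already fail to lie on a common face so that the first step kills the candidate; the surviving cases similarly fail at the second step. The residual risk is missing a sporadic extension that slips through the partial labelling ambiguity in Figures \ref{fig:StackedG1Labelled} and \ref{fig:ShiftedG1Labelled}, but because fixing the images of Vertices $1$ and $2$ determines the neighbourhoods of both, the ambiguity only affects the placement of two further vertices and so is easily tracked. Putting all these checks together yields the claim.
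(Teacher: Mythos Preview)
Your plan is essentially the paper's proof: restrict a hypothetical toroidal embedding of $H_i$ to its $G_1$-subgraph, invoke Lemmas \ref{lem:LabelledStackedG1} and \ref{lem:LabelledShiftedG1} to reduce to a finite list of labelled embeddings of $G_1$, and then check that $e_1$ and $e_2$ cannot both be inserted into any of them. Two small corrections worth noting: Figure \ref{fig:StackedG1Labelled} actually contains six partially labelled embeddings, not three, so your total case count is an undercount; and the paper organizes the case analysis more efficiently by grouping the graphs according to their shared first edge $e_1$ --- $H_5$ and $H_9$ both have $e_1 = AA'{-}BB'$, while $H_6$, $H_7$, $H_8$ all have $e_1 = A1{-}B2$ --- so that the first insertion step is carried out only twice, immediately discarding most of the labelled embeddings before $e_2$ is even considered.
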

\begin{proof}
    By Lemmas \ref{lem:LabelledStackedG1} and \ref{lem:LabelledShiftedG1}, we know how the labelled embeddings of $G_1$ into the torus look.
    Recall first that $H_5 \cong G_1 \cup \textcolor{red}{(AA'-BB')} \cup (\textcolor{orange}{A1}-\textcolor{teal}{B2})$ and $H_9 \cong G_1 \cup \textcolor{red}{(AA'-BB')} \cup (\textcolor{red}{xy}-\textcolor{blue}{CC'})$. Note that adding the edge $AA'-BB'$ is only possible in the first and third picture of Figure \ref{fig:StackedG1Labelled} and the last two pictures of Figure \ref{fig:ShiftedG1Labelled}. We have depicted this in Figure \ref{fig:EmbeddingsG1Bridges} below. It is now easy to see that it is impossible to add the second edge to extend any of these embeddings to an embedding of $H_5$ or $H_9$.
    \begin{figure}[H]
        \centering
        \includegraphics[width=.85\textwidth]{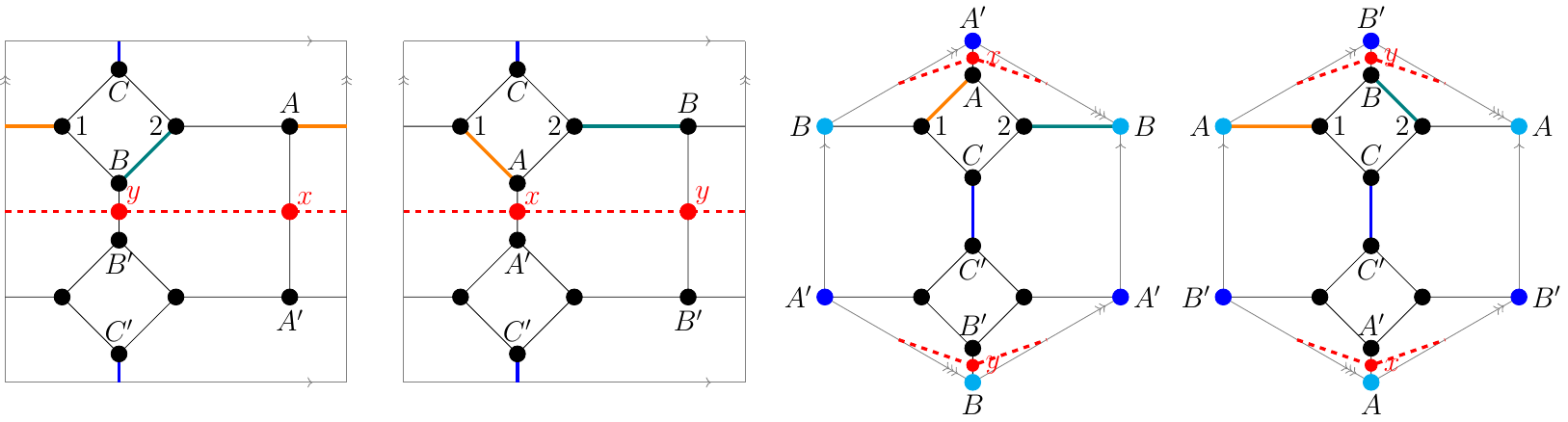}
        \caption{Labelled embeddings of $G_1 \cup (AA'-BB')$ into the torus}
        \label{fig:EmbeddingsG1Bridges}
    \end{figure}

    Next, recall that $H_6 \cong G_1 \cup \textcolor{red}{(A1-B2)} \cup (\textcolor{orange}{Ax}-\textcolor{teal}{By})$, $H_7 \cong G_1 \cup \textcolor{red}{(A1-B2)} \cup (\textcolor{orange}{Ax}-\textcolor{blue}{B1})$, and $H_8 \cong G_1 \cup \textcolor{red}{(A1-B2)} \cup (\textcolor{violet}{A2}-\textcolor{blue}{B1})$. Note that adding the edge $A1-B2$ is possible in the second, fourth, fifth and sixth picture of Figure \ref{fig:StackedG1Labelled}, and the first four pictures of Figure \ref{fig:ShiftedG1Labelled}. These are shown in Figure \ref{fig:EmbeddingsG1A1B2}. We quickly see that none of these partially labelled embeddings can be extended to an embedding of $H_6, H_7$ or $H_8$.
    \begin{figure}[H]
        \centering
        \includegraphics[width=.83\textwidth]{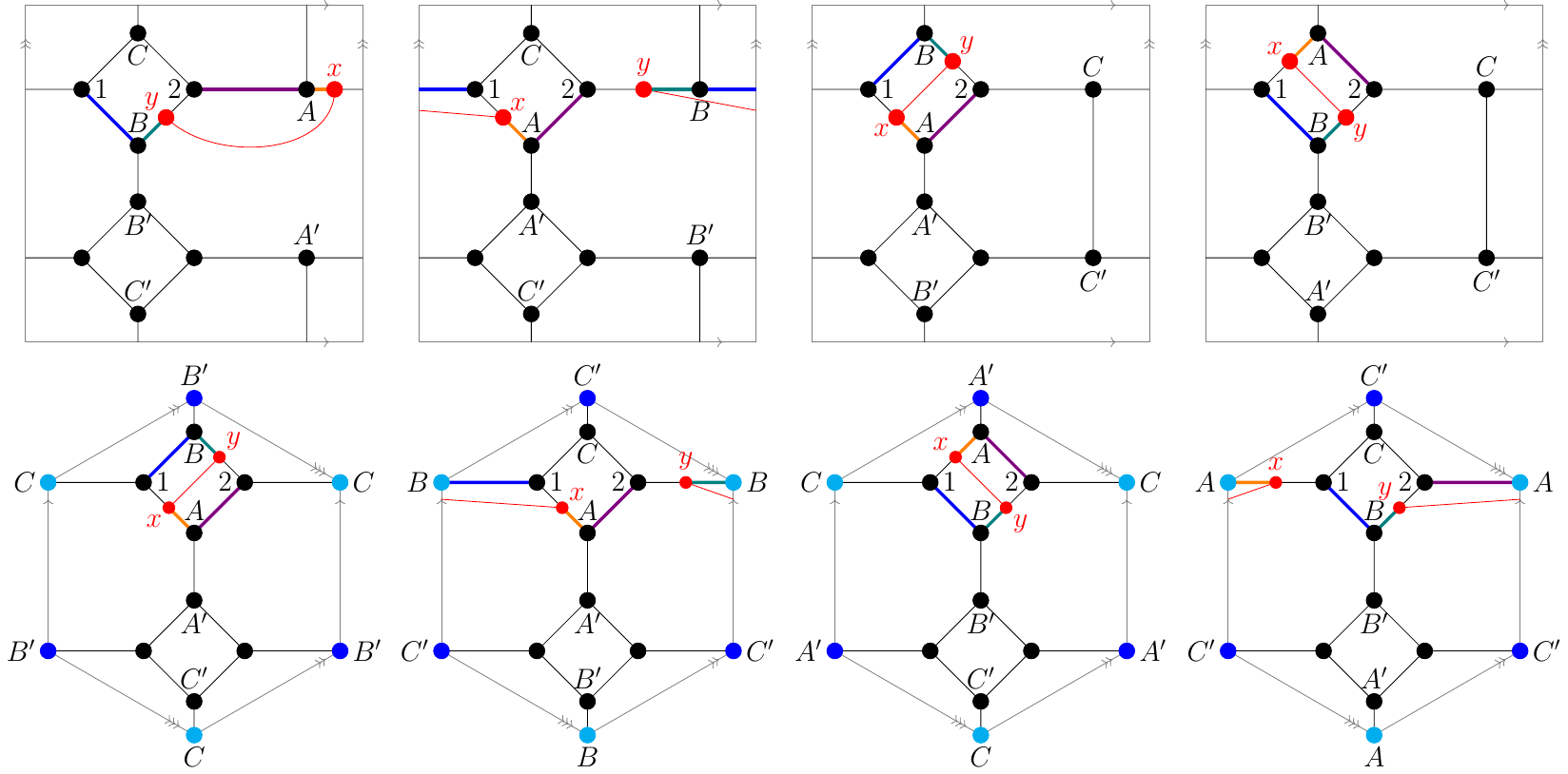}
        \caption{Labelled embeddings of $G_1 \cup (A1-B2)$ into the torus}
        \label{fig:EmbeddingsG1A1B2}
    \end{figure}
    \vspace{-1cm}
\end{proof}

\bibliographystyle{alpha}
\bibliography{main}

\begin{thebibliography}{FHRR95}

\bibitem[Arc81]{Archdeacon81}
D.~Archdeacon.
\newblock A {K}uratowski theorem for the projective plane.
\newblock {\em Journal of Graph Theory}, 5(3):243--246, 1981.

\bibitem[Cha02]{Chambers02}
J.~Chambers.
\newblock Hunting for torus obstructions.
\newblock Master's thesis, University of Victoria, 2002.

\bibitem[FHRR95]{FiedlerHunekeRichterRobertson95}
J.~R. Fiedler, J.~P. Huneke, R.~B. Richter, and N.~Robertson.
\newblock Computing the orientable genus of projective graphs.
\newblock {\em Journal of Graph Theory}, 20(3):297--308, 1995.

\bibitem[GH75]{GloverHuneke75}
H.~H. Glover and J.~P. Huneke.
\newblock Cubic irreducible graphs for the projective plane.
\newblock {\em Discrete Mathematics}, 13(4):341--355, 1975.

\bibitem[GH77]{GloverHuneke77}
H.~Glover and J.~P. Huneke.
\newblock Graphs with bounded valency that do not embed in the projective plane.
\newblock {\em Discrete Mathematics}, 18(2):155--165, 1977.

\bibitem[GHW79]{GloverHunekeWang79}
H.~H. Glover, J.~P. Huneke, and C.~S. Wang.
\newblock 103 graphs that are irreducible for the projective plane.
\newblock {\em Journal of Combinatorial Theory, Series B}, 27(3):332--370, 1979.

\bibitem[KK17]{GraphsAlgosAndOpti}
W.~L. Kocay and D.~L. Kreher.
\newblock {\em Graphs, Algorithms, and Optimization}.
\newblock CRC Press, {S}econd edition, 2017.

\bibitem[KK24]{KinganKingan24}
R.~J. Kingan and S.~R. Kingan.
\newblock On cyclically 4-connected cubic graphs.
\newblock {\em AKCE International Journal of Graphs and Combinatorics}, 2024.

\bibitem[KM14]{KrakovskiMohar14}
R.~Krakovski and B.~Mohar.
\newblock Homological face-width condition forcing ${K}_6$-minors in graphs on surfaces.
\newblock {\em SIAM Journal on Discrete Mathematics}, 28(3):1257--1275, 2014.

\bibitem[Kra]{Kramer24}
M.~Kramer.
\newblock Toroidal embeddings of cubic projective plane obstructions.
\newblock arXiv:2409.19364.

\bibitem[Kur30]{Kuratowski30}
K.~Kuratowski.
\newblock Sur le problème des courbes gauches en {T}opologie.
\newblock {\em Fundamenta Mathematicae}, 15(1):271--283, 1930.

\bibitem[KWW]{KennardWiemelerWilking21}
L.~Kennard, M.~Wiemeler, and B.~Wilking.
\newblock Splitting of torus representations and applications in the {G}rove symmetry program.
\newblock arXiv:2106.14723.

\bibitem[KWW25]{KennardWiemelerWilking22}
L.~Kennard, M.~Wiemeler, and B.~Wilking.
\newblock Positive curvature, torus symmetry, and matroids.
\newblock {\em Journal of the European Mathematical Society}, 2025.

\bibitem[MW18]{MyrvoldWoodcock18}
W.~Myrvold and J.~Woodcock.
\newblock A large set of torus obstructions and how they were discovered.
\newblock {\em The Electronic Journal of Combinatorics}, 25(1), 2018.

\bibitem[NC08]{PlanarGraphs}
T.~Nishizeki and N.~Chiba.
\newblock {\em Planar Graphs: Theory and Algorithms}.
\newblock Dover Publications, 2008.

\bibitem[RS90]{RobertsonSeymour90}
N.~Robertson and P.~D. Seymour.
\newblock Graph minors. {VIII}. {A} {K}uratowski theorem for general surfaces.
\newblock {\em Journal of Combinatorial Theory, Series B}, 48(2):255--288, 1990.

\end{thebibliography}

\end{document}